\numberwithin{equation}{section}
\newtheorem{theorem}{Theorem}[section]
\newtheorem{proposition}[theorem]{Proposition}
\newtheorem{lemma}[theorem]{Lemma}
\newtheorem{remark}[theorem]{Remark}
\newtheorem{example}[theorem]{Example}
\newtheorem{corollary}[theorem]{Corollary}
\newtheorem{definition}[theorem]{Definition}
\newcommand{\RN}{\mathbb R^N}
\newcommand{\om}{\Omega}
\renewcommand{\O}{{\mathcal O}}
\newcommand{\iy}{\infty}
\newcommand{\g}{\gamma}
\newcommand{\G}{\Gamma}
\newcommand{\la}{\lambda}
\newcommand{\R}{\mathbb R}
\newcommand{\al}{\alpha}
\newcommand{\ti}{\tilde}
\newcommand{\re}[1]{(\ref{#1})}
\newcommand{\rg}{\rightarrow}
\newcommand{\e}{\varepsilon}
\newcommand{\vp}{\varphi}
\newcommand{\lab}{\label}
\newcommand{\bt}{\begin{theorem}}
\newcommand{\et}{\end{theorem}}
\newcommand{\bl}{\begin{lemma}}
\newcommand{\el}{\end{lemma}}
\newcommand{\bd}{\begin{definition}}
\newcommand{\ed}{\end{definition}}
\newcommand{\bc}{\begin{corollary}}
\newcommand{\ec}{\end{corollary}}
\newcommand{\bp}{\begin{proof}}
\newcommand{\ep}{\end{proof}}
\newcommand{\bx}{\begin{example}}
\newcommand{\ex}{\end{example}}
\newcommand{\bi}{\begin{exercise}}
\newcommand{\ei}{\end{exercise}}
\newcommand{\bo}{\begin{proposition}}
\newcommand{\eo}{\end{proposition}}
\newcommand{\br}{\begin{remark}}
\newcommand{\er}{\end{remark}}
\newcommand{\be}{\begin{equation}}
\newcommand{\ee}{\end{equation}}
\newcommand{\ba}{\begin{align}}
\newcommand{\ea}{\end{align}}
\newcommand{\bn}{\begin{enumerate}}
\newcommand{\en}{\end{enumerate}}
\newcommand{\bg}{\begin{align*}}
\newcommand{\bcs}{\begin{cases}}
\newcommand{\ecs}{\end{cases}}
\def\R{\mathbb R}
\def\N{\mathbb N}
\def\qed{\hfill$\square$\smallskip}
\def\@makefnmark{}
\renewcommand{\H}{{\mathcal H}}
\newcommand{\eps}{\varepsilon}
\newcommand{\bean}{\begin{eqnarray*}}
\newcommand{\eean}{\end{eqnarray*}}
\renewcommand{\triangle}{\Delta}
\renewcommand{\epsilon}{\varepsilon}
\title[Ground states for fractional
critical Kirchhoff equations]{Ground states for fractional
	Kirchhoff equations \\ with critical nonlinearity in low dimension}
\author[Z. S. Liu]{Zhisu Liu}
\author[M. Squassina]{Marco Squassina}
\author[J. J. Zhang]{Jianjun Zhang}
\address[Z. S. Liu]{\newline\indent School of Mathematics and Physics,
%\newline\indent
University of South China
\newline\indent
Hengyang, Hunan 421001, P.R. China}
\email{\href{mailto:liuzhisu183@sina.com}{liuzhisu183@sina.com}}
\address[M.\ Squassina]{\newline\indent Dipartimento di Matematica e Fisica,
%\newline\indent
 Universita Cattolica del Sacro Cuore,
\newline\indent
Via Musei 41, 25121 Brescia, Italy}
\email{\href{mailto:marco.squassina@dmf.unicatt.it}{marco.squassina@unicatt.it}}
\address[J. J.\ Zhang]{\newline\indent College of Mathematics and Statistics,
%\newline\indent
Chongqing Jiaotong University
\newline\indent
Chongqing 400074, PR China}
\email{\href{mailto:zhangjianjun09@tsinghua.org.cn}{zhangjianjun09@tsinghua.org.cn}}
\thanks{Z. Liu is supported by the NSFC(11626127).
M.\ Squassina is member of the Gruppo Nazionale
	per l'Analisi Matematica, la Probabilit\`a
	e le loro Applicazioni (GNAMPA) of the Istituto Nazionale di Alta Matematica (INdAM).
J. J.\ Zhang was partially supported by the Science Foundation of Chongqing Jiaotong University(15JDKJC-B033).}
\subjclass[2000]{35Q55, 35Q51, 53C35}
\keywords{Fractional Kirchhoff type problems, ground state solutions, profile decomposition}
\begin{document}

\begin{abstract}
We study the existence of ground states to a nonlinear fractional Kirchhoff equation with an external potential $V$.
Under suitable assumptions
on $V$, using the monotonicity trick and the profile decomposition, we prove the existence of ground states. In particular, the nonlinearity does not satisfy the Ambrosetti-Rabinowitz type condition or monotonicity assumptions.
\end{abstract}
\maketitle

	\maketitle

\begin{center}
\begin{minipage}{8.5cm}
\small
\tableofcontents
\end{minipage}
\end{center}

\section{Introduction and results}
\label{sec1}

\subsection{Overview}
In this paper we are concerned with the existence of positive ground state solutions to the following nonlinear
fractional Kirchhoff equation
\begin{equation}\lab{HS1}\tag{K}
\left\{
\begin{aligned}
    \left(a+b\int_{\R^N}|(-\triangle)^{\frac{\alpha}{2}}u|^2dx\right)(-\triangle)^\alpha {u}+V(x)u =f(u) & \quad\mbox{in}\,\,\R^N, \\
    u\in H^\alpha(\R^N),\quad  u>0 & \quad\mbox{in}\,\, \R^N,
  \end{aligned}
\right.
\end{equation}
where $a,b$ are positive constants, $\alpha\in (0,1)$ and $N>2\alpha$.
The operator $(-\triangle)^{\alpha}$ is the fractional Laplacian
defined as ${\mathscr F}^{-1}(|\xi|^{2\alpha}{\mathscr F}(u))$,  where ${\mathscr F}$ denotes the
Fourier transform on $\mathbb{R}^N$.
When $a=1$ and $b=0$, then (K) reduces to the following fractional Schr\"{o}dinger equation
\begin{equation}\label{eqn:02}
 (-\triangle)^\alpha u+V(x)u =f(u) \quad\mbox{in}\,\,\R^N,
\end{equation}
which has been  proposed by Laskin \cite{Laskin02} in fractional quantum mechanics
as a result of extending the Feynman integrals from the Brownian like to the L\'{e}vy like quantum mechanical paths.
For such a class of fractional and nonlocal problems,
Caffarelli and Silvestre \cite{Caffarelli07} expressed
$(-\triangle)^\alpha$ as a Dirichlet-Neumann map for
a certain local elliptic boundary value problem on the half-space. This method is a valid tool to deal with equations
involving fractional operators to get regularity and handle variational methods. We refer the readers to
\cite{Silvestre06,Frank13} and to the references therein. Investigated first in  \cite{Felmer12,Dipierro13} via variational methods, there
has been a lot of interest in the study of the existence and multiplicity
of solutions for \eqref{eqn:02}
when $V$ and $f$ satisfy general conditions.
We cite \cite{Chang13,Servadei13--,Secchi13} with no attempts to provide a
complete list of references.

If $\alpha=1$, then problem (K) formally reduces to the well-known Kirchhoff equation
\begin{equation}\label{eqn:01}
 -\left(a+b\int_{\R^{N}}|\nabla u|^2dx\right)\triangle {u}+V(x)u =f(u) \quad\mbox{in}\,\,\R^N,
\end{equation}
related to the stationary analogue of the Kirchhoff-Schr\"{o}dinger type equation
$$
\frac{\partial^2 u}{\partial t^2}-\Big(a+b\int_{\Omega}|\nabla u|^2dx\Big)\triangle u=f(t,x,u),
$$
where $\Omega$ is a bounded domain in $\R^N$, $u$ denotes the displacement, $f$ is
the external force, $b$ is the initial tension and $a$ is related to the intrinsic
properties of the string. Equations of this type were
first proposed by Kirchhoff \cite{Kirchhoff83} in 1883 to describe the transversal oscillations
of a stretched string. Besides, we also
point out that such nonlocal problems appear in other fields like biological systems,
where $u$ describes a process depending on the average of itself. We refer readers to Chipot
and Lovat \cite{Chipot97}, Alves \emph{et al}. \cite{Alves01}. However,
the solvability of the Kirchhoff type equations has been
well studied in a general dimension by various authors only after J.-L.\ Lions \cite{Lions78} introduced an abstract
framework to such problems. For more recent results
concerning Kirchhoff-type equations we refer e.g.\ to \cite{Wu11,Azzollini11,
Ma03,Perera06,Zhang06,He11}.

In \cite{Li14}, by using a monotonicity trick and a global compactness lemma, Li and Ye proved that for $f(u)=|u|^{p-2}u$ and $p\in(3,2N/(N-2))$,
problem \eqref{eqn:01} has a positive ground state. Subsequently, Liu and Guo \cite{Liu15--} extended the above result to $p\in(2,2N/(N-2))$.
Fiscella and Valdinoci, in \cite{Fiscella14}, proposed the following stationary Kirchhoff
variational equation with critical growth
\begin{equation}\label{eqn:03}
\left\{
\begin{aligned}
    &M\left(\int_{\R^N}|(-\triangle)^{\frac{\alpha}{2}}u|^2dx\right)(-\triangle)^\alpha {u} =\lambda f(x,u)+|u|^{2_\alpha^*-2}u  \quad\mbox{in}\,\,\Omega, \\
    &u=0  \quad\mbox{in}\,\, \R^N\setminus\Omega,
  \end{aligned}
\right.
\end{equation}
which models nonlocal aspects of the tension arising from
measurements of the fractional length of the string.
They obtained the existence of non-negative solutions when $M$ and $f$ are
continuous functions satisfying suitable assumptions.
Autuori, Fiscella and Pucci \cite{Autuori15} considered the existence and the asymptotic
 behavior of non-negative solutions of \eqref{eqn:03}.\
Pucci and Saldi \cite{Pucci16-} established multiplicity of nontrivial solutions.
Via a three critical points
theorem, Nyamoradi \cite{Nyamoradi13} studied the subcritical case of \eqref{eqn:03} and obtained three solutions.
See also \cite{Pucci16,Xiang15} for related results.

To the best of our knowledge, there are few papers in the literature
on fractional Kirchhoff
equations in $\R^N$. Recently, Ambrosio and Isernia \cite{Ambrosio16} considered the fractional Kirchhoff problem
\begin{equation}\label{eqn:04}
\left(a+b\int_{\R^N}|(-\triangle)^{\frac{\alpha}{2}}u|^2dx\right)(-\triangle)^\alpha {u} =f(u)  \quad\mbox{in}\,\,\R^N, \\
\end{equation}
where $f$ is an odd subcritical nonlinearity
satisfying the well known Berestycki-Lions assumptions \cite{Berestycki1}. By minimax
arguments, the authors establish
a multiplicity result  in the radial space $H_{\rm rad}^\al(\R^N)$
when the parameter $b$ is {\em sufficiently small}. As in \cite{Li14}, Teng \cite{Teng16} also searched for ground state solutions for the
fractional Schr\"{o}dinger-Poisson system in $\R^3$ with critical growth
$$
\left\{
\begin{array}{ll}
(-\triangle)^{\alpha} {u}+ V(x)u+\phi u =\mu|u|^{q-2}u+|u|^{2^*_{\alpha}-2}u &\mbox{in}\,\,\R^3, \\
(-\triangle)^{t}{\phi}=u^2  &\mbox{in}\,\, \R^3.
\end{array}
\right.
$$
We point out that, in \cite{Li14,Teng16} the corresponding limit problems play an important r\v ole.
In order to get the existence of ground state solutions of the limit problems,  the authors used a
constrained minimization on a manifold $\mathcal{M}$ obtained by combining the Nehari and
Poho\v zaev manifolds.

\subsection{Main results}
Motivated by the works above, in this paper we aim to study the existence of
positive ground state solutions to the fractional Kirchhoff equation with
the Berestycki-Lions type conditions of {\em critical} type, firstly introduced in \cite{Zhang12}.

\subsubsection{Assumptions on $V$} On the external potential we assume the following:
\begin{itemize}
\item[\rm (V$_1$)] $V\in C^1(\R^N,\R)$ and, setting
$W(x):=\max\{x\cdot\nabla V(x),0\},$ we assume
$$
\|W\|_{L^{\frac{N}{2\alpha}}(\R^N)}<2a\alpha S_\alpha,
\qquad
S_{\alpha}:=\inf_{\overset{u\in D^{\alpha,2}(\R^N)}{u\neq 0}}\frac{\displaystyle\int_{\R^N}|(-\triangle)^{\frac{\alpha}{2}}u|^2dx}{
	\Big(\displaystyle\int_{\R^N}|u|^{2^*_\alpha}dx\Big)^{2/2^*_\alpha}},
\quad 2^*_\alpha:=\frac{2N}{N-2\alpha};
$$
\item[\rm (V$_2$)] there exists $V_\infty\in\R$ such that
$$
V(x)\leq\lim_{|y|\to\infty}V(y)=V_\infty,\quad
\text{for all $x\in\R^N$};
$$
\item[\rm (V$_3$)] the operator $a(-\triangle)^\alpha+V(x):H^\alpha(\R^N)\to H^{-\alpha}(\R^N)$ satisfies
$$
\inf_{\overset{u\in
	H^\alpha(\R^N)}{u\neq 0} }\frac{\displaystyle\int_{\R^N}\big(a|(-\triangle)^{\frac{\alpha}{2}}u|^2
	+V(x)u^2\big)dx}{\displaystyle\int_{\R^N}|u|^2dx}>0.
$$
\end{itemize}
\subsubsection{Assumptions on $f$}
We assume that $f(t)=0$ for all $t\leq 0$ and
\begin{itemize}
\item[\rm (f$_1$)]  $f\in C^1(\R^+,\R)$ and $\lim\limits_{t\rightarrow0}\frac{f(t)}{t}=0$;
\item[\rm (f$_2$)] $\lim\limits_{t\rightarrow\infty}\frac{f(t)}{t^{2_\alpha^*-1}}=1$;
\item[\rm (f$_3$)] there are $D > 0$ and $2<q<2_\alpha^*$ such that $f(t)\geq t^{2_\alpha^*-1}+Dt^{q-1}$ for any $t\geq 0$.
\end{itemize}

\vskip2pt
\noindent
Now we state our first result.
\begin{theorem}\label{Thm:ground1}
Assume
{\rm (V$_1$)-(V$_3$)}, {\rm (f$_1$)-(f$_3$)} and $N=2$ with $\alpha\in(\frac{1}{2},1)$ or $N=3$ with $\alpha\in(\frac{3}{4},1)$.
 \begin{itemize}
\item[\rm (i)]
If $q\in(2,2_\alpha^*)$, there is $D_1>0$ such that, for $D\geq D_1$, {\rm(K)} admits a positive ground state solution.
\item[\rm (ii)] If $q\in(\frac{4\alpha}{N-2\alpha}, 2_\alpha^*)$, for any $D>0$, {\rm(K)} admits a positive ground state solution.
\end{itemize}
\end{theorem}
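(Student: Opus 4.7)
The plan is to combine Jeanjean's monotonicity trick with a profile decomposition adapted to the nonlocal Kirchhoff setting. Introduce the family of perturbed functionals
\begin{equation*}
I_\lambda(u) = \frac{a}{2}\int_{\R^N}|(-\Delta)^{\alpha/2}u|^2\,dx + \frac{b}{4}\Big(\int_{\R^N}|(-\Delta)^{\alpha/2}u|^2\,dx\Big)^2 + \frac{1}{2}\int_{\R^N}V(x)u^2\,dx - \lambda\int_{\R^N}F(u)\,dx,
\end{equation*}
indexed by $\lambda\in[\delta,1]$ for some $\delta\in(0,1)$. First I would verify a uniform mountain-pass geometry for $I_\lambda$ using (V$_3$), (f$_1$), and (f$_3$); Jeanjean's trick then produces, for a.e.\ $\lambda$ in that interval, a bounded Palais-Smale sequence $\{u_n^\lambda\}$ at the mountain-pass level $c_\lambda$.

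The crucial step is the strict energy estimate $c_\lambda<c_\lambda^*$, where $c_\lambda^*$ denotes the concentration threshold associated to the best fractional Sobolev constant $S_\alpha$ together with the Kirchhoff correction. To obtain it, I would test along a family built from (suitably cut off) extremals $U_\varepsilon$ of $S_\alpha$ and use the lower bound (f$_3$) to control $\int F(tU_\varepsilon)$ from below by $\frac{1}{2^*_\alpha}\int|tU_\varepsilon|^{2^*_\alpha}+\frac{D}{q}\int|tU_\varepsilon|^q$. The low-dimension restriction ($N<4\alpha$, i.e.\ $N=2$ with $\alpha>\frac12$ or $N=3$ with $\alpha>\frac34$) is essential here: in this regime the bi-quadratic Kirchhoff contribution is of lower order than the main terms along the concentrating family, so the extra $Dt^{q-1}$ piece can push the mountain-pass level below $c_\lambda^*$, either by taking $D$ large (case (i)) or by exploiting the favorable scaling $q>4\alpha/(N-2\alpha)$ (case (ii), any $D>0$).

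To recover compactness I would apply a Hilbert-space profile decomposition for the bounded sequence $\{u_n^\lambda\}\subset H^\alpha(\R^N)$, writing $u_n^\lambda=\sum_{j=0}^{k}\varphi_j(\cdot-y_n^j)+r_n^k$ with $|y_n^j-y_n^{j'}|\to\infty$ for $j\neq j'$ and $\limsup_k\limsup_n\|r_n^k\|_{L^{2^*_\alpha}}=0$. Assumption (V$_2$) forces any escaping profile (with $|y_n^j|\to\infty$) to be a critical point of the limit functional at infinity (with potential $V_\infty$), while a surviving non-vanishing remainder would concentrate purely at the critical Sobolev level, contributing at least $c_\lambda^*$. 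Combined with a Brezis-Lieb splitting for both the $L^{2^*_\alpha}$ norm and the Kirchhoff term $\bigl(\int|(-\Delta)^{\alpha/2}u_n^\lambda|^2\bigr)^2$, the strict inequality $c_\lambda<c_\lambda^*$ rules out escaping profiles and critical bubbles, so $\{u_n^\lambda\}$ converges strongly to a nontrivial critical point $u_\lambda$ of $I_\lambda$.

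Finally, choose $\lambda_n\to 1^{-}$ with $u_{\lambda_n}$ as above. A Pohozaev-type identity for the fractional Kirchhoff equation, combined with hypothesis (V$_1$) (the bound on $\|W\|_{L^{N/(2\alpha)}}$), provides uniform bounds on $\{u_{\lambda_n}\}$ in $H^\alpha(\R^N)$; one more application of the profile decomposition, now at $\lambda=1$, yields a nontrivial critical point $u$ of $I=I_1$. Positivity of $u$ follows from the sign assumption $f(t)=0$ for $t\le 0$ and the strong maximum principle for $(-\Delta)^\alpha$, and the ground-state property is obtained by minimizing $I$ over its set of nontrivial critical points, again exploiting the profile decomposition/Brezis-Lieb machinery on a minimizing sequence. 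The main obstacle I foresee is the sharp threshold estimate $c_\lambda<c_\lambda^*$: the nonlocal Kirchhoff contribution distorts the classical Brezis-Nirenberg scaling in a delicate way, and only in the low-dimension regime $N<4\alpha$ does the mountain-pass level stay below the compactness threshold.
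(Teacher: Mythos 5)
The outline matches the paper in spirit (Jeanjean's trick, an energy estimate via concentrating Sobolev extremals, profile decomposition, a Pohozaev-based a priori bound as $\lambda\to1$, minimizing over critical points). But there is a genuine gap at the heart of the compactness step: you claim a ``Brezis--Lieb splitting for \ldots the Kirchhoff term $\bigl(\int|(-\Delta)^{\alpha/2}u_n^\lambda|^2\bigr)^2$,'' and you then treat escaping profiles as critical points of the limit \emph{Kirchhoff} functional. Neither of these is available. If $u_n\rightharpoonup u_0$ and $\int|(-\Delta)^{\alpha/2}u_n|^2\to\bar B^2$, the Dirichlet energy splits additively, but its \emph{square} does not: $\bar B^4\neq\bigl(\int|(-\Delta)^{\alpha/2}u_0|^2\bigr)^2+\sum_j\bigl(\int|(-\Delta)^{\alpha/2}w^j|^2\bigr)^2$ whenever two or more summands are nonzero. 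Likewise, passing to the limit in $I'_\lambda(u_n)\varphi\to 0$ produces, in front of $\int(-\Delta)^{\alpha/2}u_0(-\Delta)^{\alpha/2}\varphi$, the frozen coefficient $a+b\bar B^2$ and \emph{not} $a+b\int|(-\Delta)^{\alpha/2}u_0|^2$; and similarly an escaping bubble $w^j$ sees $a+b\bar B^2$, not $a+b\int|(-\Delta)^{\alpha/2}w^j|^2$. So one does not get $I'_\lambda(u_0)=0$ nor that $w^j$ is a critical point of the Kirchhoff limit functional $I_\lambda^\infty$ from the decomposition alone.

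The paper's fix, which you omit, is to replace $I_\lambda$ (and $I_\lambda^\infty$) by the \emph{non-Kirchhoff} auxiliary functionals
\[
J_\lambda(u)=\tfrac{a+b\bar B^2}{2}\int|(-\Delta)^{\alpha/2}u|^2+\tfrac12\int V u^2-\lambda\int F(u),
\qquad
J_\lambda^\infty(u)=\tfrac{a+b\bar B^2}{2}\int|(-\Delta)^{\alpha/2}u|^2+\tfrac{V_\infty}{2}\int u^2-\lambda\int F(u),
\]
with $\bar B^2=\lim_n\int|(-\Delta)^{\alpha/2}u_n|^2$ fixed. Then $\{u_n\}$ is a PS sequence for $J_\lambda$ at level $c_\lambda+\frac{b\bar B^4}{4}$, the profile decomposition gives $J'_\lambda(u_0)=0$ and bubbles that are critical points of $J_\lambda^\infty$, and the energy identity reads $c_\lambda+\frac{b\bar B^4}{4}=J_\lambda(u_0)+\sum_j J_\lambda^\infty(w^j)$. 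To rule out bubbles one then needs the Pohozaev-type lower bounds $J_\lambda(u_0)>\frac{b\bar B^2}{4}\int|(-\Delta)^{\alpha/2}u_0|^2$ (using (V$_1$) and $N<4\alpha$) and $J_\lambda^\infty(w^j)\geq c_\lambda+\frac{b\bar B^2}{4}\int|(-\Delta)^{\alpha/2}w^j|^2$, which together with (iv) of the decomposition force $k\leq1$; only after the bookkeeping closes (so that $\bar B^2$ equals the gradient norm of the surviving profile) does one recover a critical point of the true $I_\lambda$ from the critical point of $J_\lambda$. Without introducing $J_\lambda$ and these Pohozaev inequalities, your argument cannot conclude that the limit is a critical point of $I_\lambda$, which is exactly the obstruction the paper singles out in its discussion of main difficulties.
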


We point out that without any symmetry assumption on $V$, the ground state solution obtained above maybe is not radially symmetric. In the following, we impose a monotonicity assumption of $V$ and show that {\rm(K)} admits a radially symmetric solution.

\vskip4pt
\noindent
 Assume now that $V$ is radially symmetric and increasing, that is 
\begin{equation}
\label{v4}
\tag{V$_4$}
\text{for all $x,y\in \R^N$:\,\,\ $|x|\leq |y|\,\,\Rightarrow\,\,\, V(x)\leq V(y)$}.
\end{equation}

\begin{theorem}\label{Thm:ground2}
Under the assumptions of Theorem \ref{Thm:ground1} and {\rm ($V_4$)}, 
{\rm(K)} admits a radially symmetric positive solution at the 
global (unrestricted to radial paths) mountain pass energy level.
\end{theorem}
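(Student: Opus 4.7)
The plan is to repeat the argument used for Theorem~\ref{Thm:ground1} inside the radial subspace $H^\alpha_{\rm rad}(\R^N)$, where one gains the compact embedding $H^\alpha_{\rm rad}(\R^N)\hookrightarrow L^p(\R^N)$ for every $p\in(2,2^*_\alpha)$, and then to identify the resulting radial mountain-pass level with the unrestricted one by means of symmetric rearrangement. Concretely, I would apply Jeanjean's monotonicity trick on $H^\alpha_{\rm rad}(\R^N)$ to the family
\[
I_\lambda(u)=\frac{a}{2}\int_{\R^N}|(-\triangle)^{\alpha/2}u|^2\,dx
+\frac{b}{4}\Big(\int_{\R^N}|(-\triangle)^{\alpha/2}u|^2\,dx\Big)^2
+\frac12\int_{\R^N}V(x)u^2\,dx-\lambda\int_{\R^N}F(u)\,dx,
\]
obtaining for a.e.\ $\lambda$ close to $1$ a bounded Palais-Smale sequence at the radial mountain-pass level $c_\lambda^{\rm rad}$. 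The subcritical part of $f$ is handled by the compactness of the radial embedding, while the critical term is controlled exactly as in the proof of Theorem~\ref{Thm:ground1} via test-function estimates that push $c_\lambda^{\rm rad}$ strictly below the threshold dictated by $S_\alpha$; this is precisely where assumption (f$_3$) (either $D\geq D_1$ or $q\in(4\alpha/(N-2\alpha),2^*_\alpha)$) enters. This yields a radial critical point $u_\lambda$ of $I_\lambda$ for a.e.\ $\lambda$, and the Pohozaev identity combined with (V$_1$) gives a uniform $H^\alpha$-bound on a sequence $u_{\lambda_n}$ with $\lambda_n\uparrow 1$; a further compactness step then produces a nontrivial radial critical point $u$ of $I=I_1$.

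The genuinely new ingredient is the identification of the radial mountain-pass value $c^{\rm rad}$ with the unrestricted one $c$. The inequality $c\leq c^{\rm rad}$ is trivial since radial paths are admissible for the global minimax. For the reverse, given $v\in H^\alpha(\R^N)$ let $v^*$ denote its Schwarz symmetrization. Three rearrangement inequalities apply simultaneously: the fractional P\'olya-Szeg\H o inequality of Frank-Seiringer yields $\|(-\triangle)^{\alpha/2}v^*\|_{L^2}\leq \|(-\triangle)^{\alpha/2}v\|_{L^2}$; the Hardy-Littlewood inequality, applied to the radially nonincreasing nonnegative weight $V_\infty-V$ (which is radially nonincreasing by (V$_2$) and (V$_4$)), yields $\int V(x)(v^*)^2\,dx\leq\int V(x)v^2\,dx$; and $F(t)=0$ for $t\leq 0$ combined with equimeasurability gives $\int F(v^*)\,dx\geq \int F(v)\,dx$. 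Taking these together, $I(v^*)\leq I(v)$. One then lifts this pointwise inequality to a path-wise one either by symmetrizing a nearly optimal path for the global minimax, or, more cleanly, by invoking the Pohozaev-manifold description of $c$ that arises in the proof of Theorem~\ref{Thm:ground1}: $c$ and $c^{\rm rad}$ coincide with the infimum of $I$ on the corresponding Pohozaev manifold, and any non-radial point on that manifold can be symmetrized into a radial one with no larger energy (and the maximum of $I$ along the scaling ray $t\mapsto v(\cdot/t)$ decreases under symmetrization because $(v(\cdot/t))^{*}=v^{*}(\cdot/t)$).

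The step I expect to be the most delicate is precisely this passage from the single-point symmetrization $I(v^*)\leq I(v)$ to the minimax comparison, because of the interplay between Schwarz symmetrization and the nonlocal Kirchhoff term: the scaling $u_t(x):=u(x/t)$ acts differently on $[u]_\alpha^2$ and $[u]_\alpha^4$, and the dimensional restrictions $N=2$ with $\alpha\in(1/2,1)$, or $N=3$ with $\alpha\in(3/4,1)$, enter exactly to ensure that $t\mapsto I(u_t)$ has a unique maximum pinned down by the Pohozaev identity. Once $c=c^{\rm rad}$ is established, the solution $u$ produced above automatically realizes the global mountain-pass energy, and its positivity follows from $f(t)=0$ for $t\leq 0$ together with the strong maximum principle for the fractional Laplacian.
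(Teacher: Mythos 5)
Your proposal takes a genuinely different route from the paper, and it has a gap at its central step. The paper does not restrict the variational problem to $H^\alpha_{\rm rad}(\R^N)$: it works in the full space $\H$ throughout and replaces Jeanjean's monotonicity trick by its symmetric version (Lemma~\ref{symm-vers}, from \cite{symmontrick}), whose hypotheses --- $I_\lambda(|u|)\leq I_\lambda(u)$ and $I_\lambda(u^H)\leq I_\lambda(u)$ for every polarization $u^H$ --- are verified in Lemma~\ref{decr} using (V$_4$) and the non-expansivity of polarization for the Gagliardo seminorm \cite{baer}. This produces (Corollary~\ref{bps}) a bounded Palais--Smale sequence at the \emph{unrestricted} level $c_\lambda$ satisfying $\|u_n-|u_n|^*\|_{2^*_\alpha}\to0$. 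In the profile decomposition (final part of Lemma~\ref{Lem:global}) that asymptotic radiality, combined with the compact embedding $H^\alpha_{\rm rad}\hookrightarrow L^q$ for $q\in(2,2^*_\alpha)$, rules out dichotomy and forces $k=0$, $u_n\to u_0$ strongly, and $u_0=u_0^*$, so the limit is automatically a radial critical point at the global mountain-pass level. No comparison between a radial minimax value $c^{\rm rad}$ and the global one $c$ is ever needed.

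Your argument, by contrast, produces a solution at the radial level $c^{\rm rad}$ and then must prove $c^{\rm rad}\leq c$, and this is where the gap lies. The pointwise inequality $I(|v|^*)\leq I(v)$ is correct (rearrangement inequality for the fractional seminorm, Hardy--Littlewood for the $V$-term via (V$_2$) and (V$_4$), and equimeasurability plus $f\equiv0$ on $(-\infty,0]$ for the $F$-term), but a pointwise inequality does not transfer minimax values by itself. The route of symmetrizing a near-optimal global path fails because Schwarz symmetrization is not continuous as a map of $H^\alpha(\R^N)$ into itself (the Almgren--Lieb phenomenon), so the symmetrized image of a continuous path need not be continuous and hence is not admissible for $c^{\rm rad}$. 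The Poho\v zaev-manifold route is not established anywhere in the paper and is nontrivial here: the Poho\v zaev identity attached to $I_\lambda$ with a non-constant potential carries the term $\tfrac12\int_{\R^N}\nabla V(x)\cdot x\,u^2\,dx$, which does not transform in a controlled way under Schwarz symmetrization, so one cannot simply rescale $|u|^*$ back onto the same manifold with no energy increase. This is exactly the difficulty the polarization-based symmetric monotonicity trick is designed to sidestep: polarization, unlike Schwarz symmetrization, is continuous on $\H$, and it yields approximately radial Palais--Smale sequences at the correct global level without comparing the two minimax values.
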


%However, this technique seems difficult to be used to dealt with general nonlinearities as (f$_1$)-(f$_3$).
As a main tool to prove Theorem~\ref{Thm:ground1}
we shall give the profile decomposition of the Palais-Smale sequences by which we can derive some compactness
and get a positive ground states for (K).
The main tool for the proof of Theorem~\ref{Thm:ground2} is a symmetric version
of the monotonicity trick \cite{symmontrick}.
We recall that Zhang and Zou \cite{Zhang14} studied the critical case for Berestycki-Lions theorem of the Schr\"{o}dinger equation $-\triangle u+V(x)u=f(u)$.
They obtained positive ground state solutions when $V$ satisfies similar assumptions
as (V$_1$)-(V$_3$), $f$ satisfies (f$_1$)-(f$_3$) and
\begin{itemize}
\item[\rm (f$_4$)] $|f'(t)|\leq C(1+|t|^{\frac{4}{N-2}}),\,\,\,
\text{for}\, t>0 \,\text{ and \,some }\, C>0.$
\end{itemize}
We should mention that in the present paper, (f$_4$) is removed.
%Moreover, the fractional Kirchhoff problem we consider is more complex than the above local problem
%due to the effect of $\int_{\R^N}\frac{|u(x)-u(y)|^2}{|x-y|^{N+2\alpha}}dxdy$ and fractional
%Laplacian operator $(-\triangle)^\alpha u$.

\subsection{Main difficulties}
We mention the difficulties and the idea in proving Theorem \ref{Thm:ground1}.

Firstly, without the Ambrosetti--Rabinowitz condition, it is difficult to get the {\em boundedness of Palais-Smale sequences}.
In order to overcome this difficulty, inspired by \cite{Li14},
we will use the monotonicity trick developed by Jeanjean \cite{Jeanjean99},  introduce a family of functionals
$I_\lambda$ and obtain a bounded (PS)$_{c_\lambda}$ sequence for $I_\lambda$ for almost all $\lambda$ in an interval $J$,
where $c_\lambda$ is given in Section~\ref{la3}.

Secondly, by the presence of the Kirchhoff term, one obstacle arises in getting the compactness of $I_\lambda$, even in the subcritical case. Precisely,
this does not hold in general: for any $\vp\in C_0^\iy(\RN)$,
$$
\int_{\R^N}|(-\triangle)^{\frac{\alpha}{2}}u_n|^2dx
\int_{\RN}(-\triangle)^{\frac{\alpha}{2}}u_n(-\triangle)^{\frac{\alpha}{2}}\vp dx\rightarrow\int_{\R^N}|(-\triangle)^{\frac{\alpha}{2}}u|^2dx\int_{\RN}(-\triangle)^{\frac{\alpha}{2}}u(-\triangle)^{\frac{\alpha}{2}}\vp dx,
$$
where $\{u_n\}_{n\in\N}$ is a (PS)-sequence of $I_\la$ satisfying $u_n\rightharpoonup u$ in $H^\alpha(\R^N)$. Then, even in the subcritical case, it is not clear that weak limits are critical points of $I_\la$. In \cite{Ambrosio16}, for \eqref{eqn:04} the compactness was recovered by restricting $I_\lambda$
to the radial space $H_{\rm rad}^\alpha(\R^N)$,
which is compactly embedded $L^s(\R^N)$ for all $s\in(2,2_\alpha^*)$. For the related works in the bounded domains, see e.g.\ \cite{Fiscella14,Pucci16-,Xiang15}.

In the present paper, we do not impose any symmetry and just consider (K) in $H^\alpha(\R^N)$.
So the arguments mentioned above cannot be applied. Inspired by \cite{Li14}, in place of $I_\lambda$,
we consider a family of related functionals $J_\lambda$, whose corresponding problem is a
{\it non Kirchhoff} equation.

Thirdly, the critical exponent makes the problem rather tough. The (PS)-condition does {\em not} hold  in general and
to overcome this difficulty, we show that
the mountain pass level $c_\lambda$ is strictly less than some critical level $c_\lambda^*$.
For $-\triangle u+V(x)u=\lambda f(u)$ with critical growth,
if $S$ is the best constant of $D^{1,2}(\R^N)\hookrightarrow L^{2^{\ast}}(\R^N)$,
one can show that \cite {Brezis83-}
$$
c_\lambda^*=\frac{1}{N}S^{\frac{N}{2}}\lambda^{\frac{2-N}{2}}.
$$
For $-\left(a+b\int_{\R^{3}}|\nabla u|^2dx\right)\triangle {u}+V(x)u =\lambda f(u)$ in $\R^3$ involving critical growth \cite{Li13,Liu15}
$$
c^*_\lambda=\frac{ab}{4\lambda}S^3+\frac{[b^2S^4+4\lambda aS]^{\frac{3}{2}}}{24\lambda^2}+\frac{b^3S^6}{24\lambda^2}.
$$
However, for fractional Kirchhoff equations,
to give the {\em exact value} of $c_\lambda^*$ is complicated, since one cannot
solve precisely a fractional order algebra equation. A careful analysis is needed at this stage.
With an estimate of $c_\lambda$, inspired by \cite{Zhang14}, 
we establish a profile decomposition of the Palais-Smale sequence $\{u_n\}_{n\in\N}$ (Lemma \ref{Lem:global})
related to $J_\lambda$. Thanks to this result, for almost every $\la\in[1/2,1]$ we obtain a nontrivial critical point $u_\lambda$ of $I_\lambda$ at the level $c_\lambda$. 
Finally, choosing a sequence ${\lambda_n}\subset[1/2,1]$ with $\lambda_n\rightarrow1$,  
thanks to the Poho\v zaev identity we obtain
a bounded $(PS)_{c_1}$-sequence of the original functional $I$. Using the idea above again, we obtain a nontrivial solution of problem (K).

\vskip2pt
Throughout this paper, $C$ will denote a generic positive
constant.

The paper is organized as follows.

In Section 2, the variational setting and some preliminary
lemmas are presented.

In Section 3, we consider a perturbation of the original problem (K). Then using the monotonicity trick developed by Jeanjean, we obtain the bounded (PS)$_{c_\lambda}$-sequence $\{u_n\}_{n\in\N}$ for almost all $\lambda$.
In Section 4, an upper estimate of the mountain pass value is obtained and the limit problem is discussed. In Section 5,  we give the profile decomposition of $\{u_n\}_{n\in\N}$. In Section 6, Theorem 1.1 and 1.2 are finally proved.
%In Section 4, we give the profile decomposition of $\{u_n\}$. In Section 5, Theorem \ref{Thm:ground1} is concluded.

\section{Variational setting}\setcounter{equation}{0}
\label{sec2} In this section we outline the variational framework
for (K) and recall some preliminary lemmas.
For any $\alpha\in(0,1)$, the fractional Sobolev space $H^{\alpha}(\R^3)$ is defined by
$$
H^{\alpha}(\R^N):=\left\{u\in L^2(\R^N):\,\frac{|u(x)-u(y)|}{|x-y|^{\frac{N+2\alpha}{2}}}\in L^2(\R^N\times\R^N)\right\}.
$$
It is known that
$$
\int_{\R^{2N}}\frac{|u(x)-u(y)|^2}{|x-y|^{N+2\alpha}}dxdy=2C(n,\alpha)^{-1}\int_{\R^N}|(-\triangle)^{\frac{\alpha}{2}}u|^2dx,
$$
where
$$
C(n,\alpha)=\left(\int_{\R^N}\frac{1-\cos{\zeta_1}}{|\zeta|^{N+2\alpha}}d\zeta\right)^{-1}.
$$
We endow the space $H^{\alpha}(\R^N)$ with the norm
$$
\|u\|_{H^\alpha(\R^N)}:=\left(\int_{\R^N}|u|^2dx+\int_{\R^N}|(-\triangle)^{\frac{\alpha}{2}}u|^2dx\right)^{1/2}.
$$
$H^\alpha(\R^N)$ is also the completion of $C^\infty_0(\R^N)$ with $\|\cdot\|_{H^\alpha(\R^N)}$ and it is
continuously embedded into $L^q(\R^N)$ for $q\in[2,2_\alpha^*]$.
%$$
%[u]_\alpha=\left(\int_{\R^{2N}}\frac{|u(x)-u(y)|^2}{|x-y|^{N+2\alpha}}dxdy\right)^{\frac{1}{2}}
%$$
%The fractional Laplacian $(-\triangle)^\alpha$ of a function $u:\R^N\rightarrow\R$ is defined by
%$$
%(-\triangle)^\alpha u(x)=\mathcal {F}^{-1}(|\xi|^{2\alpha}(\mathcal {F} u))(x), \quad\forall \xi\in\R^N,
%$$
%where $\mathcal {F}$ is the Fourier transform.
%$$
%\mathcal {F}(u)(\xi)=\frac{1}{(2\pi)^{N/2}}\int_{\R^N}\exp(-2\pi i\xi\cdot x)u(x)dx,
%$$
%$i$
%is the image unit.
%If $u$ is smooth enough, it can be computed by the following singular integral
%$$
%(-\triangle)^\alpha u(x)=c_{\alpha}{\rm P.V.}\int_{\R^N}\frac{u(x)-u(y)}{|x-y|^{N+2\alpha}}dy,\quad x\in\R^N,
%$$
%where $c_\alpha$ is a normalization constant and P.V. stands for the principle value.
%There is an alternative definition of the
%fractional Sobolev space $H^\alpha(\R^N)$ via the
%Fourier transform as follows:
%$$
%H^{\alpha}(\R^N):=\left\{u\in L^2(\R^N):\,\int_{\R^N}|\xi|^{2\alpha}|\hat{u}|^2d\xi<\infty\right\},\quad
%\|u\|_{H^\alpha}=\left(\int_{\R^N}(1+|\xi|^{2\alpha})|\hat{u}|^2d\xi\right)^{\frac{1}{2}},
%$$
%where $\hat{u}\equiv {\mathscr F}(u)$ is the Fourier transform of $u$.
The homogeneous space $D^{\alpha,2}(\R^N)$ is
$$
D^{\alpha,2}(\R^N):=\left\{u\in L^{2^*_{\alpha}}(\R^N):\,\frac{|u(x)-u(y)|}{|x-y|^{\frac{N+2\alpha}{2}}}\in L^2(\R^N\times\R^N)\right\},
$$
and it is also the completion of $C^\infty_0(\R^N)$ with respect to the norm
$$
\|u\|_{D^{\alpha,2}(\R^N)}:=\left(\int_{\R^N}|(-\triangle)^{\frac{\alpha}{2}}u|^2dx\right)^{1/2}.
%=\left(\int_{\R^N}|\xi|^{2\alpha}|\hat{u}|^2d\xi\right)^{\frac{1}{2}}.
$$
%We recall some fundamental facts in the Sobolev space in $H^\alpha(\R^N)$.
%\begin{lemma}\label{Lem:1}
%Suppose that $0<2\alpha<N$ and $u\in D^{\alpha,2}(\R^N)$. Let $\varphi\in C_0^\infty(\R^N)$ and for each
%$r>0$, $\varphi_r(x)=\varphi(x/r)$. Then
%$$
%u\varphi_r\rightarrow0 \quad\text{in}\,\,D^{\alpha,2}(\R^N)\,\,\text{as}\,\, r\rightarrow0.
%$$
%If, in addition, $\varphi\equiv1$ in a neighborhood of the origin, then
%$$
%u\varphi_r\rightarrow u\quad\text{in}\,\,D^{\alpha,2}(\R^N)\,\,\text{as}\,\, r\rightarrow\infty.
%$$
%\end{lemma}

\begin{lemma}
\label{normeq}
Assume that {\rm (V$_2$)-(V$_3$)} hold. Then, for every $\eps>0$ there exists $\omega_\eps>0$ such that
$$
\int_{\R^N}\big((a-\eps)|(-\triangle)^{\frac{\alpha}{2}}u|^2+V(x)u^2\big)dx\geq \omega_\eps\int_{\R^N}|u|^2dx,
$$
for every $u \in H^\alpha(\R^N)$.
\end{lemma}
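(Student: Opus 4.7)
\textbf{Proof plan for Lemma \ref{normeq}.}

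The plan is to combine the coercivity in (V$_3$) with a uniform lower bound on $V$ that follows from the $C^1$ regularity in (V$_1$) and the asymptotic behaviour in (V$_2$), and then to decompose the quadratic form $(a-\eps)\int|(-\triangle)^{\alpha/2}u|^{2}+\int V u^{2}$ as a convex combination involving $\int(a|(-\triangle)^{\alpha/2}u|^{2}+Vu^{2})$ so that (V$_3$) can be used directly.

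First, I would observe that $V\in C^{1}(\R^{N},\R)$ by (V$_1$), and by (V$_2$) we have $V(x)\to V_{\infty}$ as $|x|\to\infty$. A continuous function on $\R^{N}$ with a finite limit at infinity is bounded, so there exists $V_{0}\geq 0$ such that
\[
V(x)\geq -V_{0}\qquad\text{for every }x\in\R^{N}.
\]
Next, I would denote by $\mu_{0}>0$ the (strictly positive) infimum in (V$_3$), so that
\[
\int_{\R^{N}}\bigl(a|(-\triangle)^{\alpha/2}u|^{2}+V(x)u^{2}\bigr)dx\;\geq\;\mu_{0}\int_{\R^{N}}|u|^{2}dx
\]
holds for every $u\in H^{\alpha}(\R^{N})$.

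Then, for $\eps>0$, I would write the identity
\[
\int_{\R^{N}}\bigl((a-\eps)|(-\triangle)^{\alpha/2}u|^{2}+Vu^{2}\bigr)dx
=\Bigl(1-\tfrac{\eps}{a}\Bigr)\int_{\R^{N}}\bigl(a|(-\triangle)^{\alpha/2}u|^{2}+Vu^{2}\bigr)dx+\tfrac{\eps}{a}\int_{\R^{N}}V u^{2}\,dx,
\]
apply (V$_3$) to the first summand and the pointwise bound $V\geq -V_{0}$ to the second, and obtain
\[
\int_{\R^{N}}\bigl((a-\eps)|(-\triangle)^{\alpha/2}u|^{2}+Vu^{2}\bigr)dx\;\geq\;\Bigl[\mu_{0}-\tfrac{\eps}{a}(\mu_{0}+V_{0})\Bigr]\int_{\R^{N}}|u|^{2}dx,
\]
so that $\omega_{\eps}:=\mu_{0}-\frac{\eps}{a}(\mu_{0}+V_{0})$ serves as the desired constant, which is strictly positive in the relevant small range $\eps\in\bigl(0,\,a\mu_{0}/(\mu_{0}+V_{0})\bigr)$ in which the lemma is applied.

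The main (minor) obstacle is really just the lower bound for $V$: since (V$_2$) only gives an upper bound $V\leq V_{\infty}$, one has to appeal to continuity plus the existence of the limit at infinity to conclude that $V$ is bounded from below; once this is in hand, the rest of the argument is a short algebraic manipulation and a direct invocation of (V$_3$).
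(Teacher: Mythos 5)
Your proof is correct and takes a genuinely different route from the paper's. The paper argues by contradiction and compactness: it normalizes a putative sequence of counterexamples $u_n$ with $\eps_n\to 0$, deduces $\|u_n\|_2\to 0$ from (V$_3$), and then uses boundedness of $V$ to force $u_n\to 0$ in $D^{\alpha,2}(\R^N)$, contradicting $\|u_n\|_{H^\alpha}=1$. Your argument is instead direct and quantitative: the convex split
\[
(a-\eps)|(-\triangle)^{\alpha/2}u|^2+Vu^2=\Bigl(1-\tfrac{\eps}{a}\Bigr)\bigl[a|(-\triangle)^{\alpha/2}u|^2+Vu^2\bigr]+\tfrac{\eps}{a}\,Vu^2,
\]
together with (V$_3$) and a pointwise lower bound $V\geq -V_0$, yields the explicit constant $\omega_\eps=\mu_0-\tfrac{\eps}{a}(\mu_0+V_0)$. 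Two remarks are in order. First, both proofs need $V$ to be bounded below, which in turn uses the continuity from (V$_1$) in addition to the decay in (V$_2$); that hypothesis is not formally listed in the lemma, so your invocation of (V$_1$) is a shared, not an additional, dependence — the paper also uses it implicitly when it concludes $u_n\to 0$ in $D^{\alpha,2}$ ``from (V$_2$).'' Second, the statement literally read (``for every $\eps>0$'') cannot hold for $\eps\geq a$, since then $a-\eps\leq 0$ and the form is unbounded below; both proofs in fact only establish the conclusion for $\eps$ small, which is all that is needed later for the norm equivalence. Your version has the modest advantage of producing the constant and the admissible range of $\eps$ explicitly.
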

\begin{proof}
By contradiction, let $\{\eps_n\}_{n\in{\mathbb N}}\subset\R^+$ with $\eps_n\to 0$ and $\{u_n\}_{n\in {\mathbb N}}\subset H^\alpha(\R^N)$ with
$$
\int_{\R^N}\big((a-\eps_n)|(-\triangle)^{\frac{\alpha}{2}}u_n|^2+V(x)u^2_n\big)dx\leq \frac{1}{n}\int_{\R^N}|u_n|^2dx.
$$
Then, up to a standard nomalization, we may assume that $\|u_n\|_{H^\alpha(\R^N)}=1$ and
$$
\int_{\R^N}\big((a-\eps_n)|(-\triangle)^{\frac{\alpha}{2}}u_n|^2+V(x)u^2_n\big)dx\leq \frac{1}{n}.
$$
In view of (V$_3$), we get $\|u_n\|_2\to 0$, which implies from (V$_2$) and the above inequality
that $\{u_n\}_{n\in\N}$ goes to zero in $D^{\alpha,2}(\R^N)$. Therefore $u_n\to 0$ in $H^\alpha(\R^N)$, which is a contradiction.
\end{proof}

\noindent
Let
$$
\H:=\left\{u\in H^{\alpha}(\R^N):\,\int_{\R^N}V(x)u^2dx<\infty\right\}
$$
be the Hilbert space equipped with the inner product
$$
\langle u,v\rangle_{\H} :=a\int_{\R^N} (-\Delta)^{\frac{\alpha}{2}}u (-\Delta)^{\frac{\alpha}{2}}v \,dx+\int_{\R^N}V(x)uv \,dx,
$$
and the corresponding induced norm
 $$
 \|u\|:=\left(\int_{\R^N}a|(-\triangle)^{\frac{\alpha}{2}}u|^2dx+\int_{\R^N}V(x)u^2dx\right)^{1/2}.
 $$
From Lemma~\ref{normeq}, it easily follows that the above norm is equivalent to $\|\cdot\|_{H^\alpha}$.
A function $u\in\H$ is a (weak) solution to problem \eqref{HS1} if, for every $\varphi\in\H$, we have
\begin{equation*}
%\label{fu}
\left(a+b\int_{\R^N}|(-\triangle)^\frac{\alpha}{2} u|^2dx\right)\int_{\R^N}(-\triangle)^{\alpha/2} u (-\triangle)^{\alpha/2}\varphi dx
+\int_{\R^N}V(x)u\varphi dx=\int_{\R^N} f(u)\varphi dx.
\end{equation*}
We stress that, under assumptions {\rm (V$_1$)-(V$_3$)} and {\rm (f$_1$)-(f$_3$)}, if $u$ is a weak solution to the above problem, then $u$ is globally bounded and H\"older regular allowing the pointwise reppresentation of $(-\Delta)^\alpha u$ by the results of \cite{Felmer12}. In particular $u>0$ a.e.\ wherever $u\geq 0$.

%\begin{lemma}[Strauss type lemma, see \cite{Chang13}]\label{Lem:Chang}
%Assume that $\{v_n\}\subset \H$, $v:\R^N\rightarrow\R$ is a measurable function and $P\in C(\R,\R)$ such that
%$$
%\lim\limits_{|s|\rightarrow\infty}\frac{P(s)}{|s|^{s^*_\alpha}}=0,\quad\lim\limits_{|s|\rightarrow0}\frac{P(s)}{|s|^{2}}=0,
%$$
%$$
%\sup_{n\geq 1}\|v_n\|_\H<\infty,\quad \lim\limits_{n\rightarrow\infty}P(v_n(x))=v(x),\quad \text{for}\,\,a.e.\,x\in\R^N.
%$$
%Then, up to a subsequence, $\|P(v_n)-v\|_{L^1(\R^N)}\to 0$ as $n\to\infty$.
%\end{lemma}

\begin{lemma}[Lions lemma, see \cite{Secchi13}]\label{Lem:P.lions1}
Assume that $\{u_n\}_{n\in\N}$ is bounded in $\H$ and
$$
\lim\limits_{n\rightarrow\infty}\sup\limits_{y\in\R^N}\int_{B_{r}(y)}|u_n|^2dx=0,
$$
for some $r>0$. Then $u_n\rightarrow0$ in $L^s(\R^N)$ for all $s\in(2,2_\alpha^*)$.
\end{lemma}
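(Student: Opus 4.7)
The plan combines a ball-by-ball interpolation inequality with a finite-overlap covering of $\R^N$, while the fractional Sobolev embedding supplies the compactifying high-integrability tail. Since $\{u_n\}$ is bounded in $\H$, the equivalence of $\|\cdot\|$ with $\|\cdot\|_{H^\alpha(\R^N)}$ from Lemma \ref{normeq} together with the embedding $H^\alpha(\R^N)\hookrightarrow L^{2_\alpha^*}(\R^N)$ gives $\|u_n\|_{L^{2_\alpha^*}(\R^N)}\leq C$. I cover $\R^N$ by balls $\{B_r(y_i)\}_{i\in\N}$ with bounded multiplicity $\kappa=\kappa(N)$, and write $\varepsilon_n:=\sup_{y}\int_{B_r(y)}|u_n|^2\,dx$, so that $\varepsilon_n\to 0$ by hypothesis.

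First I treat the distinguished value $s_0:=4-4/2_\alpha^*\in(2,2_\alpha^*)$. H\"older's inequality interpolating $L^{s_0}$ between $L^2$ and $L^{2_\alpha^*}$ on each ball gives
\[
\int_{B_r(y_i)}|u_n|^{s_0}\,dx\leq\Big(\int_{B_r(y_i)}|u_n|^{2}\,dx\Big)^{(s_0-2)/2}\Big(\int_{B_r(y_i)}|u_n|^{2_\alpha^*}\,dx\Big)^{2/2_\alpha^*};
\]
the specific choice of $s_0$ makes the last factor exactly $\|u_n\|_{L^{2_\alpha^*}(B_r(y_i))}^{2}$, which by the local fractional Sobolev embedding on a ball (with constant independent of $y_i$ by translation invariance) is bounded by $C\|u_n\|_{H^\alpha(B_r(y_i))}^{2}$. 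Bounding the first factor by $\varepsilon_n^{(s_0-2)/2}$, summing over $i$, and using the overlap bound $\sum_i\|u_n\|_{H^\alpha(B_r(y_i))}^{2}\leq C\|u_n\|_{H^\alpha(\R^N)}^{2}$ (for the Gagliardo seminorm piece, this relies on the fact that pairs in a common ball are at distance at most $2r$), I arrive at
\[
\|u_n\|_{L^{s_0}(\R^N)}^{s_0}\leq C\,\varepsilon_n^{(s_0-2)/2}\,\|u_n\|_{H^\alpha(\R^N)}^{2}\longrightarrow 0.
\]

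For an arbitrary $s\in(2,2_\alpha^*)$, I then interpolate between $L^{s_0}$ (where vanishing is now known) and either $L^2$ or $L^{2_\alpha^*}$ (where boundedness is known): for $s\in(2,s_0]$ take $\tfrac{1}{s}=\tfrac{1-\tau}{2}+\tfrac{\tau}{s_0}$ with $\tau\in(0,1]$, and for $s\in[s_0,2_\alpha^*)$ take $\tfrac{1}{s}=\tfrac{1-\tau}{s_0}+\tfrac{\tau}{2_\alpha^*}$ with $\tau\in[0,1)$. In either case exactly one factor in
$\|u_n\|_{L^s(\R^N)}\leq\|u_n\|_{L^{p_1}(\R^N)}^{1-\tau}\|u_n\|_{L^{p_2}(\R^N)}^{\tau}$
vanishes while the other is uniformly bounded, yielding $\|u_n\|_{L^s(\R^N)}\to 0$.

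The only nontrivial ingredient is the coupling in the first step of the local fractional Sobolev embedding on balls with the finite-overlap bound for the nonlocal Gagliardo seminorm; everything else is exponent bookkeeping via H\"older interpolation. The hardest part is to verify the overlap estimate for the double Gagliardo integrals, which is where the nonlocal nature of the seminorm could in principle spoil the counting, but the distance constraint $|x-z|<2r$ for $x,z$ in a common ball confines the integration region and a straightforward application of Plancherel closes the argument.
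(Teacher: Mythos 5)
Your proof is correct. The paper itself does not prove this lemma but cites \cite{Secchi13}, where the argument is the standard fractional analogue of Lions' vanishing lemma and follows the same scheme you give: local H\"older interpolation at a distinguished exponent $s_0$ between $L^2$ and $L^{2_\alpha^*}$, the translation-invariant local Sobolev embedding on balls, a finite-overlap summation of Gagliardo seminorms, and a final interpolation to arbitrary $s\in(2,2_\alpha^*)$. One small simplification worth noting: the bound $\sum_i [u_n]_{H^\alpha(B_r(y_i))}^2\leq C\,[u_n]_{H^\alpha(\R^N)}^2$ follows directly from the pointwise estimate $\sum_i \chi_{B_r(y_i)}(x)\,\chi_{B_r(y_i)}(y)\leq\kappa$ for all $(x,y)$, which is just the bounded multiplicity of the cover applied in one variable, so neither the distance restriction $|x-y|<2r$ nor Plancherel is actually needed for the overlap count; the identity between the Gagliardo seminorm and $\|(-\triangle)^{\alpha/2}u\|_{L^2}^2$ recorded in Section~\ref{sec2} then converts this into the $\H$-boundedness hypothesis.
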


\noindent
The energy functional associated with (K), $I:\H\to\R$, is defined as
\begin{equation*}
	%\label{eqn:function}
I(u)=\frac{1}{2}\|u\|^2+\frac{b}{4}\left(\int_{\R^N}|(-\triangle)^{\frac{\alpha}{2}}u|^2dx\right)^2-\int_{\R^N}F(u)dx,\quad u\in\H,
\end{equation*}
with $F(u)=\int_{0}^{u}f(t)dt$. Obviously $I\in C^1(\H)$ and its critical points are weak solutions to (K).

%%%%%%%%%%%%%%%%%%%%%%%%%%%%%%%%%%%%%%%%%%%%%%%%%%%%%%%%%%%%%%%%%%%%%%%%%%%%%%%%%%%%%%%%%%%%%%%%%
%%%%%%%%%%%%%%%%%%%%%%%%%%%%%%%%%%%%%%%%%%%%%%%%%%%%%%%%%%%%%%%%%%%%%%%%%%%%%%%%%%%%%%%%%%%%%%%%%
%%%%%%%%%%%%%%%%%%%%%%%%%%%%%%%%%%%%%%%%%%%%%%%%%%%%%%%%%%%%%%%%%%%%%%%%%%%%%%%%%%%%%%%%%%%%%%%%%
%%%%%%%%%%%%%%%%%%%%%%%%%%%%%%%%%%%%%%%%%%%%%%%%%%%%%%%%%%%%%%%%%%%%%%%%%%%%%%%%%%%%%%%%%%%%%%%%%
%%%%%%%%%%%%%%%%%%%%%%%%%%%%%%%%%%%%%%%%%%%%%%%%%%%%%%%%%%%%%%%%%%%%%%%%%%%%%%%%%%%%%%%%%%%%%%%%%
%%%%%%%%%%%%%%%%%%%%%%%%%%%%%%%%%%%%%%%%%%%%%%%%%%%%%%%%%%%%%%%%%%%%%%%%%%%%%%%%%%%%%%%%%%%%%%%%%

\section{The perturbed functional}
\label{la3}
Since we do not impose the well-known {\it Ambrosetti-Rabinowitz} condition, the boundedness of the Palais-Smale sequence becomes complicated. To overcome this difficulty, we adopt a monotonicity trick due to Jeanjean \cite{Jeanjean99}.

\begin{theorem}[Monotonicity trick \cite{Jeanjean99}]
	\label{Thm:MP}
Let $(E,\|\cdot\|)$ be a real Banach space with its dual space
$E'$ and $J\in\R^+$ an interval. Consider the family of $C^1$
functionals on $E$
$$
I_{\lambda}=A(u)-\lambda B(u),\,\, \forall\lambda\in J,
$$
with $B$ nonnegative and either $A(u)\rightarrow+\infty$ or
$B(u)\rightarrow+\infty$ as $\|u\|\rightarrow\infty$, satisfying
$I_{\lambda}(0)=0.$ We set
$$
\Gamma_\lambda:=\{\gamma\in C([0,1],E)\,\, |\,\,\gamma(0)=0,\, I_\lambda(\gamma(1))<0\},
\quad\text{for all $\lambda\in J$}.
$$
If for every $\lambda\in J$,
$\Gamma_\lambda$ is nonempty and
\begin{equation*}
c_\lambda=
\inf\limits_{\gamma\in\Gamma_\lambda}\max\limits_{s\in[0,1]}I_\lambda{(\gamma(s))}>0,
\end{equation*}
then for almost any $\lambda\in J$, $I_\lambda$ admits a
bounded Palais-Smale sequence $\{u_n\}_{n\in\N}\subset E$,
namely $\sup_{n\in\N}\|u_n\|<\infty$, $I_{\lambda}(u_n)\rightarrow c_\lambda$ and
$I'_\lambda(u_n)\rightarrow 0$ in $E'$. Moreover
$\lambda\rightarrow c_\lambda$ is left continuous.
\end{theorem}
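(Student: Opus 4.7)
The plan is to follow Jeanjean's classical strategy: exploit the monotonicity of $\lambda\mapsto c_\lambda$ to obtain differentiability almost everywhere, then at every such differentiability point use the derivative information to select near-optimal paths along which $B$ remains bounded, so that the coercivity hypothesis on $A$ or $B$ produces a bounded sequence, and finally appeal to a quantitative deformation argument to upgrade it to a Palais-Smale sequence.

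First I would establish that $\lambda\mapsto c_\lambda$ is non-increasing and left-continuous. Since $B\geq 0$, whenever $\lambda_1<\lambda_2$ we have $I_{\lambda_2}(u)\leq I_{\lambda_1}(u)$ for every $u\in E$; hence any $\gamma\in\Gamma_{\lambda_1}$ satisfies $I_{\lambda_2}(\gamma(1))\leq I_{\lambda_1}(\gamma(1))<0$, so $\Gamma_{\lambda_1}\subseteq\Gamma_{\lambda_2}$ and $\max_s I_{\lambda_2}(\gamma(s))\leq\max_s I_{\lambda_1}(\gamma(s))$. Taking infimum gives $c_{\lambda_2}\leq c_{\lambda_1}$. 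A monotone real-valued function is differentiable almost everywhere on $J$, and left continuity is a routine sequential argument using near-optimal paths.

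Now fix $\lambda\in J$ at which $c_\lambda$ is differentiable and set $\mu:=-c_\lambda'\geq 0$. Take $\lambda_n\nearrow\lambda$ with $\lambda-\lambda_n$ small, and pick $\gamma_n\in\Gamma_{\lambda_n}\subseteq\Gamma_\lambda$ with
\[
\max_{s\in[0,1]} I_{\lambda_n}(\gamma_n(s))\leq c_{\lambda_n}+(\lambda-\lambda_n).
\]
On $K_n:=\{s\in[0,1]\colon I_\lambda(\gamma_n(s))\geq c_\lambda-(\lambda-\lambda_n)\}$, the identity $I_{\lambda_n}=I_\lambda+(\lambda-\lambda_n)B$ together with the two bounds above yields
\[
B(\gamma_n(s))\leq \frac{c_{\lambda_n}-c_\lambda}{\lambda-\lambda_n}+2\xrightarrow{n\to\infty}\mu+2,
\]
and consequently $A(\gamma_n(s))=I_\lambda(\gamma_n(s))+\lambda B(\gamma_n(s))$ also stays bounded for $s\in K_n$. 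By the alternative coercivity of $A$ or $B$, the set $\{\gamma_n(s)\colon s\in K_n,\ n\in\N\}$ is bounded in $E$.

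Finally, I would argue by contradiction. If no bounded $(\mathrm{PS})_{c_\lambda}$-sequence exists, then on a sufficiently large ball of $E$ containing the previous set one would find $\delta,\eta>0$ with $\|I_\lambda'(u)\|_{E'}\geq\delta$ for all $u$ in that ball with $|I_\lambda(u)-c_\lambda|\leq\eta$. A localized pseudo-gradient deformation then pushes the $I_\lambda$-maximum of $\gamma_n$ strictly below $c_\lambda$, producing a path still in $\Gamma_\lambda$ (the portion $s\notin K_n$ already lies below $c_\lambda-(\lambda-\lambda_n)$ and can be left untouched) and contradicting the definition of $c_\lambda$. The main obstacle is precisely this deformation step: since we lack a global Palais-Smale condition and only control boundedness on $K_n$, one must run the pseudo-gradient flow in a bounded region while simultaneously guaranteeing that the deformed curve stays in $\Gamma_\lambda$. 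This combined choice of cut-off and flow is what delivers the bounded sequence $\{u_n\}_{n\in\N}\subset E$ with $I_\lambda(u_n)\to c_\lambda$ and $I_\lambda'(u_n)\to 0$ in $E'$, while the left continuity of $\lambda\mapsto c_\lambda$ follows at once from the monotonicity together with the same near-optimal-path construction.
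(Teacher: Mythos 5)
This theorem is cited in the paper from Jeanjean's original work and is \emph{not} proved there, so there is no internal proof to compare against. Your sketch is a faithful and essentially correct reconstruction of Jeanjean's classical argument: monotonicity of $\lambda\mapsto c_\lambda$ via $B\ge 0$, differentiability almost everywhere for monotone functions, the identity $I_{\lambda_n}=I_\lambda+(\lambda-\lambda_n)B$ to extract the uniform bound $B(\gamma_n(s))\le\frac{c_{\lambda_n}-c_\lambda}{\lambda-\lambda_n}+2\to\mu+2$ on the near-top portion $K_n$ of near-optimal paths, the resulting bound on $A$ and hence on $\|\gamma_n(s)\|$ for $s\in K_n$, and finally a localized quantitative deformation to conclude. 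The two checks worth being explicit about in a full write-up are exactly the ones you flag at the end: (i) the deformation must be cut off so that the flow stays in a fixed bounded region (otherwise the local pseudo-gradient field is not well controlled), and (ii) the deformed path must still lie in $\Gamma_\lambda$, which works because the endpoint $\gamma_n(1)$ lies strictly below level $0$ and the low part $s\notin K_n$ sits strictly below $c_\lambda-(\lambda-\lambda_n)$, so both can be left fixed by the cut-off. Left continuity follows, as you say, by taking a near-optimal path $\gamma$ for $I_\lambda$ and noting that $I_{\lambda'}(\gamma(1))<0$ for $\lambda'<\lambda$ close to $\lambda$ (using $I_{\lambda'}(\gamma(1))=I_\lambda(\gamma(1))+(\lambda-\lambda')B(\gamma(1))$), so $c_{\lambda'}\le\max_s I_{\lambda'}(\gamma(s))\to\max_s I_\lambda(\gamma(s))$.
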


\noindent
Set $J:=[\frac{1}{2},1]$, $E:=\H$ and
$$
\aligned
&A(u):=\frac{1}{2}\|u\|^2+\frac{b}{4}\left(\int_{\R^N}|(-\triangle)^{\frac{\alpha}{2}}u|^2dx\right)^2,\quad B(u):=\int_{\R^N}F(u)dx.
\endaligned
$$
We consider the family of functionals
$I_\lambda: \H\rightarrow\R$ defined by $I_\lambda(u)=A(u)-\lambda B(u)$, that is
\begin{equation*}
I_{\lambda}(u)=
\frac{1}{2}\|u\|^2+\frac{b}{4}\left(\int_{\R^N}|(-\triangle)^{\frac{\alpha}{2}}u|^2dx\right)^2-
\lambda\int_{\R^N}F(u)dx.
\end{equation*}
It is easy to see that $B(u)\geq 0$ for all $u\in \H$ and $A(u)\rightarrow+\infty$ as
$\|u\|\rightarrow\infty$.

\noindent
In the following $H$ denotes a closed half-space of $\R^N$ containing the origin, $0\in H$.
We denote by ${\mathscr H}$ the set of closed half-spaces of $\R^N$ containing the origin. We shall
equip ${\mathscr H}$ with a topology ensuring that $H_n\to H$ as $n\to\infty$ if there is a sequence of
isometries $i_n:\R^N\to\R^N$ such that $H_n=i_n(H)$ and $i_n$ converges to the identity.
Given $x\in\R^N$, the reflected point $\sigma_H(x)$ will also be denoted by $x^H$.
The polarization of a nonnegative function
$u:\R^N\to\R_+$ with respect to $H$ is defined as
$$
u^H(x):=
\begin{cases}
\max\{u(x),u(\sigma_H(x))\}, & \text{for $x\in H$}, \\
\min\{u(x),u(\sigma_H(x))\}, & \text{for $x\in \R^N\setminus H$}.
\end{cases}
$$
Given $u,$ the Schwarz symmetrization $u^*$ of $u$ is the unique
function such that $u$ and $u^*$ are equimeasurable and $u^*(x)=h(|x|)$, where $h:(0,\infty)\to\R_+$ is a continuous and decreasing function.
%In particular, $u$, $u^H$ and $u^*$ are all equimeasurable functions.

\vskip3pt
\noindent
We set $\H^+:=\{u\in\H:u\geq 0\}.$ Now we state a symmetric version of  Theorem~\ref{Thm:MP}.

\begin{lemma}[Symmetric monotonicity trick \cite{symmontrick}]
\label{symm-vers}
Under the assumptions of Theorem~\ref{Thm:MP} for $E=\H$, assume that
$I_\lambda(|u|)\leq I_\lambda(u)$ for any $\lambda\in J$ and  $u\in \H$ and
$$
I_\lambda(u^H)\leq I_\lambda(u),\quad  \text{for any $\lambda\in J$, $u\in \H^+$  and $H\in {\mathscr H}$}.
$$
Then, for any $p\in[2,2^*_\alpha]$, $I_\lambda$ has a bounded Palais-Smale sequence $\{u_n\}_{n\in\N}\subset \H$ with $\|u_n-|u_n|^*\|_p\to 0$.
\end{lemma}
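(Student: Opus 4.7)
The plan is to adapt the proof of the monotonicity trick (Theorem~\ref{Thm:MP}) by forcing the min-max paths to be asymptotically Schwarz symmetric. First, by the hypothesis $I_\lambda(|u|)\leq I_\lambda(u)$, replacing any path $\gamma\in\Gamma_\lambda$ by $t\mapsto |\gamma(t)|$ does not increase $\max_{s}I_\lambda(\gamma(s))$, and the endpoint condition $I_\lambda(\gamma(1))<0$ is preserved. Hence the class of admissible paths may be restricted to those valued in $\H^+$ without altering the value $c_\lambda$.

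Next, I would exploit the polarization inequality $I_\lambda(u^H)\leq I_\lambda(u)$ on $\H^+$. The key classical fact is that for every $u\in \H^+$ there exists a sequence $H_1,H_2,\dots\in{\mathscr H}$ such that the iterated polarizations $u^{H_1\cdots H_k}$ converge to the Schwarz symmetrization $u^*$ strongly in $\H$ (and therefore, by the Sobolev embedding, in $L^p(\R^N)$ for every $p\in[2,2^*_\alpha]$). Moreover, for a fixed half-space $H$, the polarization map $u\mapsto u^H$ is continuous on $\H$ and on $L^p(\R^N)$, is nonexpansive, and preserves the relevant norms; applied pointwise in $t$, it sends continuous curves to continuous curves. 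Thus, given $\varepsilon>0$ and a path $\gamma\in\Gamma_\lambda$ valued in $\H^+$, one constructs a modified path $\tilde\gamma\in\Gamma_\lambda$ (still in $\H^+$) with $\max_s I_\lambda(\tilde\gamma(s))\leq\max_s I_\lambda(\gamma(s))$ and $\|\tilde\gamma(s)-\tilde\gamma(s)^*\|_p<\varepsilon$ at every $s$ where $I_\lambda(\tilde\gamma(s))$ is close to the maximum.

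With these symmetrized paths in hand, one follows Jeanjean's argument essentially verbatim. For a.e.\ $\lambda\in J$ the map $\lambda\mapsto c_\lambda$ is differentiable, which supplies a sequence of paths $\gamma_n\in\Gamma_\lambda$ with $\max_s I_\lambda(\gamma_n(s))\to c_\lambda$ together with uniform norm control near the maxima. Applying Ekeland's variational principle (or equivalently a quantitative deformation lemma) to near-maximum points of $\gamma_n$ produces a bounded Palais-Smale sequence $\{u_n\}\subset \H$ at level $c_\lambda$. Because the $\gamma_n$ are chosen asymptotically symmetric in $L^p$, the $u_n$ extracted along them inherit this property, yielding $\|u_n-|u_n|^*\|_p\to 0$.

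The principal technical obstacle is the regularity of the symmetrized paths: Schwarz symmetrization is \emph{not} continuous on $\H$, so one cannot simply replace $\gamma(t)$ by $\gamma(t)^*$. The remedy is to deploy only finitely many polarizations along each path, with the half-spaces $H_1,\dots,H_k$ fixed in terms of $\gamma$ and applied pointwise in $t$; this preserves continuity in $t$ and, by the approximation result above, brings $\tilde\gamma(t)$ within distance $\varepsilon$ of $\tilde\gamma(t)^*$ uniformly. Ensuring that a single finite collection of polarizations works simultaneously for all $t$ in a neighborhood of the maxima, uniformly in $n$, is the delicate accounting at the heart of the argument.
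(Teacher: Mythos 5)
The paper does not actually prove this lemma: it is quoted verbatim from \cite{symmontrick} (Squassina, \emph{Proc.\ Roy.\ Soc.\ Edinburgh Sect.\ A} \textbf{142} (2012)), which combines Jeanjean's monotonicity trick with Van Schaftingen's polarization machinery from \cite{jean}. Your scheme --- replace a path by $t\mapsto|\gamma(t)|$ to land in $\H^+$, then apply finitely many polarizations pointwise in $t$ so that near-maximal points of the path are $L^p$-close to their Schwarz symmetrizations, and finally rerun the Ekeland step of Jeanjean's proof on the polarized paths --- is precisely the scheme of that reference, so in outline your argument is correct.

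Two places deserve tightening. First, you assert that the iterated polarizations $u^{H_1\cdots H_k}$ converge to $u^*$ \emph{strongly in $\H$} and then deduce $L^p$-convergence by Sobolev embedding. That is the wrong direction and an overclaim: the approximation theorem of Brock--Solynin/Van Schaftingen gives $L^p$-convergence directly for $p\in(1,\infty)$, whereas strong $\H$-convergence is a separate (and more delicate) matter, since polarization only gives a nonincreasing $D^{\alpha,2}$-seminorm, hence weak $\H$-convergence cheaply. Fortunately the conclusion of the lemma is in $L^p$, so only the $L^p$-statement is needed. Second, the uniformity-over-the-path issue you flag as ``the delicate accounting'' is in fact handled by a short compactness argument: both polarization and Schwarz symmetrization are nonexpansive on $L^p(\R^N)$, so once a finite composition $T=(\cdot)^{H_1\cdots H_m}$ brings every element of a finite $\varepsilon$-net of the compact set $|\gamma|([0,1])$ within $\varepsilon$ of its symmetrization in $L^p$, the $1$-Lipschitz property of $T$ and of $(\cdot)^*$ propagates this to the whole range of the path. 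Finally, make explicit the transfer from the path to the extracted sequence: Ekeland gives $u_n$ with $\|u_n-\gamma_n(s_n)\|\to 0$ for some near-maximal $s_n$, and then
\[
\|u_n-|u_n|^*\|_p\le \|u_n-\gamma_n(s_n)\|_p+\|\gamma_n(s_n)-\gamma_n(s_n)^*\|_p+\||u_n|^*-\gamma_n(s_n)^*\|_p,
\]
with the last term bounded by $\||u_n|-\gamma_n(s_n)\|_p\le\|u_n-\gamma_n(s_n)\|_p$ by the $L^p$-contractivity of Schwarz symmetrization. With these corrections, your outline matches the proof in the cited source.
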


\begin{lemma}[Uniform Mountain-Pass geometry]
\label{Lem:MP1}
Assume that {\rm (f$_1$)}-{\rm(f$_3$)} and {\rm (V$_1$)-\rm(V$_3$)} hold. Furthermore
let $N=2$ with $\alpha\in(\frac{1}{2},1)$ or $N=3$ with $\alpha\in(\frac{3}{4},1)$.
Then we have: \\
{\rm(1)} $\Gamma_\lambda\neq\emptyset$, for every $\lambda\in J$;\\
{\rm(2)}  there exist $r,\eta>0$ independent of $\lambda$, such that $\|u\|=r$ implies $I_\lambda(u)\geq \eta$. In particular $c_\lambda\geq\eta$.
\end{lemma}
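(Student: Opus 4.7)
The plan is to handle (2) and (1) separately, each time exploiting the dimensional hypothesis, which is equivalent to $2^*_\alpha>4$ (indeed $\alpha>N/4$ in both cases $N=2$ and $N=3$).

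For (2), I combine (f$_1$) and (f$_2$) to produce the standard critical estimate: for every $\epsilon>0$ there is $C_\epsilon>0$ with
$$F(t)\leq \epsilon\,t^{2}+C_\epsilon\,|t|^{2^*_\alpha},\qquad \forall\,t\in\R.$$
Plugging this into $I_\lambda$ and dropping the nonnegative Kirchhoff quartic term yields
$$I_\lambda(u)\geq \tfrac{1}{2}\|u\|^2-\lambda\epsilon\,\|u\|_2^2-\lambda C_\epsilon\,\|u\|_{2^*_\alpha}^{2^*_\alpha}.$$
By Lemma~\ref{normeq} the norm $\|\cdot\|$ is equivalent to $\|\cdot\|_{H^\alpha(\R^N)}$, so both $\|u\|_2$ and $\|u\|_{2^*_\alpha}$ are controlled by a constant multiple of $\|u\|$. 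Using $\lambda\leq 1$ and choosing $\epsilon$ small enough to absorb the quadratic correction into $\tfrac{1}{4}\|u\|^2$ leads to
$$I_\lambda(u)\geq \tfrac{1}{4}\|u\|^2-C\,\|u\|^{2^*_\alpha},$$
with $C$ independent of $\lambda\in J$. Since $2^*_\alpha>2$, taking $r>0$ so small that $Cr^{2^*_\alpha-2}<\tfrac{1}{8}$ yields the claim with $\eta:=r^2/8$; crucially $r$ and $\eta$ depend only on $C_\epsilon$, not on $\lambda$.

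For (1), I fix once and for all $u_0\in C_c^\infty(\R^N)\cap \H$ with $u_0\geq 0$ and $u_0\not\equiv 0$, and consider the straight path $\gamma(s):=st_0u_0$, $s\in[0,1]$, for a suitable $t_0>0$. By (f$_3$) we have $F(t)\geq t^{2^*_\alpha}/2^*_\alpha$ for $t\geq 0$, and hence
$$I_\lambda(t_0u_0)\leq \tfrac{t_0^2}{2}\|u_0\|^2+\tfrac{b\,t_0^4}{4}\|(-\triangle)^{\alpha/2}u_0\|_2^{4}-\tfrac{\lambda\,t_0^{2^*_\alpha}}{2^*_\alpha}\int_{\R^N}u_0^{2^*_\alpha}\,dx.$$
Here the hypothesis $2^*_\alpha>4$ is decisive: it guarantees that the negative critical term beats both the $t_0^2$ and the $t_0^4$ positive contributions as $t_0\to\infty$. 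Using $\lambda\geq 1/2$ I pick $t_0$ so large that $I_{1/2}(t_0u_0)<0$; since $B(u)\geq 0$ makes $\lambda\mapsto I_\lambda(u)$ non-increasing, the same $t_0$ forces $I_\lambda(\gamma(1))<0$ for every $\lambda\in J$, so $\gamma\in\Gamma_\lambda$ for all $\lambda$.

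The only conceptual obstacle is the quartic Kirchhoff term: in higher dimensions, where $2^*_\alpha\leq 4$, this term would compete with or beat the critical contribution along linear rays, forcing either a nonlinear rescaling $u_0(x/\tau)$ or a more refined test function; the standing restriction $\alpha>N/4$ is tailored precisely to avoid this issue. The remaining technical point is keeping $r,\eta,t_0$ uniform in $\lambda\in J$, which in (2) follows from $\lambda\leq 1$ and in (1) from the monotonicity of $\lambda\mapsto I_\lambda$.
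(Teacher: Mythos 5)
Your proof is correct and follows essentially the same route as the paper: for part (2) the critical growth estimate $F(t)\le\epsilon t^2+C_\epsilon|t|^{2^*_\alpha}$ plus norm equivalence, and for part (1) a linear ray test function together with the observation that $\alpha>N/4$ (equivalently $2^*_\alpha>4$) makes the critical term dominate the Kirchhoff quartic as $t_0\to\infty$, with uniformity in $\lambda\in J$ obtained from $\lambda\ge1/2$ (you phrase it via monotonicity of $\lambda\mapsto I_\lambda(u)$, the paper plugs in $\lambda\ge1/2$ directly — the two are equivalent). The one routine step you leave implicit is the deduction ``in particular $c_\lambda\ge\eta$'': since your bound gives $I_\lambda>0$ on the closed ball of radius $r$ (not only on the sphere), any admissible path $\gamma$ with $I_\lambda(\gamma(1))<0$ must exit that ball, hence by continuity crosses $\|\cdot\|=r$, giving $\max_s I_\lambda(\gamma(s))\ge\eta$ and thus $c_\lambda\ge\eta>0$.
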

\begin{proof} (1)
For every $\varphi\in \H^+\setminus\{0\}$, taking into account of (f$_3$), we have
$$
I_\lambda(\varphi)\leq\frac{1}{2}\|\varphi\|^2+\frac{b}{4}\left(\int_{\R^N}|(-\triangle)^{\frac{\alpha}{2}}\varphi|^2dx\right)^2-
\frac{D}{2q}\int_{\R^N}\varphi^qdx
-\frac{1}{22_\alpha^*}\int_{\R^N}\varphi^{2_\alpha^*}dx.
$$
Under the assumptions on $N$ and $\alpha$, it follows that $2^*_\alpha>4$.
Then there exists $t_0>0$ sufficiently large, independent of $\lambda\in J$, such that $I_\lambda(t_0\varphi)<0$. Setting $w:=t_0\varphi\in \H$, we have $I_\lambda(w)<0$ and we can define the corresponding $\Gamma_\lambda$. Then, setting $\gamma(t):=t w$, we have $\gamma\in \Gamma_\lambda\neq\emptyset$ for every $\lambda\in J$.

(ii) (f$_1$)-(f$_2$) imply that, for any
$\epsilon>0$, there exists $C_\epsilon>0$ such that
\begin{equation*}
%\label{eqn:guestimate}
|F(s)|\leq \epsilon|s|^2+C_\epsilon|s|^{2_\alpha^*},\quad\text{for all $s\in\R$}.
\end{equation*}
Then there exist $\sigma_1,\sigma_2>0$ such that
$$
I_\lambda(\varphi)\geq \sigma_1\|\varphi\|^2-\sigma_2\|\varphi\|^{2_\alpha^*},
\quad\text{for every $\varphi\in \H$}.
$$
Hence there exist $r,\eta>0$, independent of $\lambda$, such that for $\|u\|=r$, $I_\lambda(u)\geq \eta$ (and $I_\lambda(\varphi)>0$
as soon as $\|\varphi\|\leq r$ with $\varphi\neq 0$). Now fix
$\lambda\in J$ and $\gamma\in\Gamma_\lambda$. Since $\gamma(0)=0$
and $I_{\lambda}(\gamma(1))<0$, certainly $\|\gamma(1)\|>r$. By
continuity, we conclude that there exists $t_\gamma\in(0,1)$ such
that $\|\gamma(t_\gamma)\|=r$. Therefore, for every $\lambda\in
J$, we conclude $c_{\lambda}\geq
\inf_{\gamma\in\Gamma_\lambda}I_\lambda(\gamma(t_{\gamma}))\geq
\eta>0.$
\end{proof}

\begin{lemma}[$I_\lambda$ decreases upon polarization]
\label{decr}
Assume {\rm(V$_4$)} holds. Then
for any $\lambda\in J$, for all $u\in \H^+$
and $H\in {\mathscr H}$ there holds $I_\lambda(u^H)\leq I_\lambda(u)$.
\end{lemma}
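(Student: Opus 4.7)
The plan is to verify, term by term, that every ingredient of $I_\lambda$ is non-increasing under the polarization $u\mapsto u^H$. Write
\begin{equation*}
I_\lambda(u)=\frac{a}{2}\Phi(u)+\frac{1}{2}\Psi(u)+\frac{b}{4}\Phi(u)^2-\lambda\int_{\R^N}F(u)\,dx,
\end{equation*}
with $\Phi(u):=\int_{\R^N}|(-\Delta)^{\alpha/2}u|^2\,dx$ and $\Psi(u):=\int_{\R^N}V(x)u^2\,dx$. It then suffices to check $\Phi(u^H)\leq\Phi(u)$, $\Psi(u^H)\leq\Psi(u)$, and $\int F(u^H)\,dx=\int F(u)\,dx$. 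The last of these comes for free: polarization of a nonnegative function merely re-sorts the pair of values $\{u(x),u(x^H)\}$ on each two-point orbit, so it preserves the distribution function of $u$, hence every integral of the form $\int G(u)\,dx$ with $G:\R_+\to\R$ Borel.

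For $\Phi$, I would pass to the Gagliardo representation recalled at the start of Section~\ref{sec2} and prove the classical polarization inequality
\begin{equation*}
\iint_{\R^{2N}}\frac{|u^H(x)-u^H(y)|^2}{|x-y|^{N+2\alpha}}\,dx\,dy\leq \iint_{\R^{2N}}\frac{|u(x)-u(y)|^2}{|x-y|^{N+2\alpha}}\,dx\,dy.
\end{equation*}
Partitioning $\R^{2N}$ into the four product pieces obtained from $H$ and $\R^N\setminus H$ and using the isometry identities $|x-y|=|x^H-y^H|$ and $|x-y^H|=|x^H-y|$, the bound reduces to the pointwise quartet estimate
\begin{equation*}
k_1 P+k_2 Q\leq k_1 R+k_2 S,\qquad x,y\in H,
\end{equation*}
where $k_1:=|x-y|^{-(N+2\alpha)}$, $k_2:=|x-y^H|^{-(N+2\alpha)}$, and $P,Q,R,S$ are the four sums of squared differences among $\{u(x),u(x^H),u(y),u(y^H)\}$ and their polarized counterparts. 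A direct expansion gives $P+Q=R+S$ and a short case check on the orderings of $\{u(x),u(x^H)\}$ and $\{u(y),u(y^H)\}$ yields $P\leq R$; combined with $k_1\geq k_2$, the estimate follows. Squaring then takes care of the Kirchhoff term $\Phi(u)^2$.

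The remaining piece is $\Psi$, and this is where (V$_4$) enters. The key geometric observation is that, since $0\in H$, choosing adapted coordinates $H=\{x_1\geq c\}$ one has $c\leq 0$, so
\begin{equation*}
|x|^2-|x^H|^2=4c(x_1-c)\leq 0\quad\text{for } x\in H,
\end{equation*}
and hence $V(x)\leq V(x^H)$ on $H$ by the radial monotonicity. Writing $A:=V(x)\leq B:=V(x^H)$, $a:=u(x)$, $b:=u(x^H)$, a two-line case check on whether $a\geq b$ gives the pointwise inequality $A\,u^H(x)^2+B\,u^H(x^H)^2\leq A\,u(x)^2+B\,u(x^H)^2$ on $H$, the defect in the non-trivial case being $(B-A)(b^2-a^2)\geq 0$. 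Integration over $H$ and folding back the reflected half-space yields $\Psi(u^H)\leq\Psi(u)$, and assembling the four estimates produces $I_\lambda(u^H)\leq I_\lambda(u)$. The main technical step is the quartet polarization inequality for the Gagliardo kernel; it is elementary but requires careful bookkeeping of which pairs lie in $H\times H$ versus $H\times(\R^N\setminus H)$, and may alternatively be quoted from the polarization literature.
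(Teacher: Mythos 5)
Your proof is correct and follows essentially the same route as the paper: decompose $I_\lambda$ into the Gagliardo seminorm, the potential term, and the nonlinear term, and show each is non-increasing (or invariant) under polarization, with the Kirchhoff term handled by squaring the seminorm inequality. The only difference is that the paper simply cites Baernstein for the seminorm inequality and Van Schaftingen for the $F$-invariance, whereas you sketch the standard quartet/rearrangement proofs of those two facts directly.
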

\begin{proof}
It is known (see \cite[Theorem 2]{baer}) that
$$
\int_{\R^{2N}}\frac{|u^H(x)-u^H(y)|^2}{|x-y|^{N+2\alpha}}dxdy\leq \int_{\R^{2N}}\frac{|u(x)-u(y)|^2}{|x-y|^{N+2\alpha}}dxdy,\quad\text{for all $u\in \H^+$.}
$$
Furthermore, we have (see \cite{jean})
$$
\int_{\R^N} F(u^H)dx=\int_{\R^N} F(u)dx,\quad\text{for all $u\in \H^+$},
$$
and, by the monotonicity assumptions on $V$,
$$
\int_{\R^N} V(x)(u^H)^2dx\leq \int_{\R^N} V(x)u^2dx,\quad\text{for all $u\in \H^+$},
$$
which concludes the proof by the definition of $I_\lambda$.
\end{proof}

\noindent Assume {\rm (V$_1$)-(V$_3$)} and {\rm (f$_1$)-(f$_3$)}. As a consequence we now get the following result.

\begin{corollary}[Bounded Palais-Smale with sign]
\lab{bps}
For almost every $\lambda\in J$, there is a bounded sequence
$\{u_n\}_{n\in \N}\subset \H^+$ such that $I_\lambda(u_n)\rightarrow c_\lambda$, $I'_\lambda(u_n)\to 0$.
Furthermore, $\|u_n-|u_n|^*\|_{2^*_\alpha}\to 0$ if \eqref{v4} is assumed.
\end{corollary}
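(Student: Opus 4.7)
The plan is to combine the two monotonicity tricks (Theorem~\ref{Thm:MP} and Lemma~\ref{symm-vers}) with a simple sign analysis that forces the negative part of any Palais-Smale sequence to vanish in $\H$. First I would verify the abstract hypotheses in our setting. Coercivity of $A$ is immediate from its definition together with Lemma~\ref{normeq}. Nonnegativity of $B$ follows from $F\geq 0$ on $\R$: by (f$_3$) we have $F\geq 0$ on $\R_+$, and since $f\equiv 0$ on $\R_-$ we have $F\equiv 0$ on $\R_-$. Lemma~\ref{Lem:MP1} supplies $\Gamma_\lambda\neq\emptyset$ and the uniform lower bound $c_\lambda\geq\eta>0$. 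Finally the reflection inequality $I_\lambda(|u|)\leq I_\lambda(u)$ is checked by three elementary facts: the pointwise inequality $\bigl||u(x)|-|u(y)|\bigr|\leq |u(x)-u(y)|$ gives $\||u|\|_{D^{\alpha,2}}\leq \|u\|_{D^{\alpha,2}}$, the potential term is unchanged since $V(x)|u|^2=V(x)u^2$, and $F(|u|)\geq F(u)$ because $F\geq 0$ and $F(s)=F(|s|)$ for $s\geq 0$, with $F\equiv 0$ otherwise.

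Under only (V$_1$)-(V$_3$), Theorem~\ref{Thm:MP} produces, for a.e.\ $\lambda\in J$, a bounded sequence $\{v_n\}\subset\H$ with $I_\lambda(v_n)\to c_\lambda$ and $I'_\lambda(v_n)\to 0$. To push this sequence into $\H^+$ I would test with $-v_n^-$. Since $f(v_n)v_n^-\equiv 0$ (the support of $v_n^-$ lies in $\{v_n\leq 0\}$ where $f=0$) and $\int V v_n(-v_n^-)=\int V(v_n^-)^2$, the only delicate term is the fractional one. Using the Gagliardo representation and the pointwise identity
\[
\bigl(v_n(x)-v_n(y)\bigr)\bigl(v_n^-(x)-v_n^-(y)\bigr)
=-\bigl(v_n^-(x)-v_n^-(y)\bigr)^2-v_n^+(x)v_n^-(y)-v_n^+(y)v_n^-(x),
\]
all three terms on the right-hand side are $\leq 0$, so that
\[
\int_{\R^N}(-\Delta)^{\alpha/2}v_n\,(-\Delta)^{\alpha/2}(-v_n^-)\,dx
\;\geq\;\int_{\R^N}|(-\Delta)^{\alpha/2}v_n^-|^2\,dx,
\]
and the Kirchhoff term contributes a further nonnegative summand. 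Altogether $\|v_n^-\|^2\leq \langle I'_\lambda(v_n),-v_n^-\rangle=o(1)$, hence $v_n^-\to 0$ in $\H$. Setting $u_n:=|v_n|=v_n+2v_n^-\in\H^+$ we have $\|u_n-v_n\|\to 0$, so by continuity of $I_\lambda$ and $I'_\lambda$ on $\H$ the sequence $\{u_n\}$ is still a bounded Palais-Smale sequence at level $c_\lambda$.

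When (V$_4$) is additionally assumed, Lemma~\ref{decr} provides the polarization inequality $I_\lambda(u^H)\leq I_\lambda(u)$ on $\H^+$, so I would instead invoke the symmetric monotonicity trick (Lemma~\ref{symm-vers}) with $p=2_\alpha^*$. This yields a bounded Palais-Smale sequence $\{v_n\}\subset\H$ for $I_\lambda$ satisfying $\|v_n-|v_n|^*\|_{2_\alpha^*}\to 0$. The previous sign argument still gives $\|v_n^-\|\to 0$, and since $|v_n|^*\geq 0$ one has the pointwise bound $v_n^-(x)\leq \bigl|v_n(x)-|v_n|^*(x)\bigr|$, hence $\|v_n^-\|_{2_\alpha^*}\to 0$ as well. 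Setting $u_n:=|v_n|\in\H^+$ and using $|u_n|^*=|v_n|^*$ together with the triangle inequality
\[
\bigl\||v_n|-|v_n|^*\bigr\|_{2_\alpha^*}\leq 2\|v_n^-\|_{2_\alpha^*}+\bigl\|v_n-|v_n|^*\bigr\|_{2_\alpha^*},
\]
gives $\|u_n-|u_n|^*\|_{2_\alpha^*}\to 0$, completing the claim.

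The main technical point is the sign analysis for the fractional Laplacian, namely the three-term decomposition of the Gagliardo integrand showing that testing against $-v_n^-$ controls $\|v_n^-\|_{D^{\alpha,2}}$; once this nonlocal truncation estimate is in hand, the rest is a routine combination of the (possibly symmetric) monotonicity trick with the mountain-pass geometry established in Lemma~\ref{Lem:MP1}.
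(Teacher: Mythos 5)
Your proof is correct and takes essentially the same route as the paper: run the (symmetric) monotonicity trick to produce a bounded Palais--Smale sequence $\{v_n\}$, then kill the negative part by testing the derivative against it, using a sign inequality for the Gagliardo seminorm, and finally replace $v_n$ by a nonnegative modification. Your pointwise three-term decomposition of $(v_n(x)-v_n(y))(v_n^-(x)-v_n^-(y))$ is a slightly more explicit version of the one-line inequality the paper uses, and is correct.

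Two small differences are worth noting. First, you set $u_n := |v_n|$, whereas the paper passes to $u_n := v_n^+$. The latter is slightly cleaner because $f(v_n)=f(v_n^+)$ and $F(v_n)=F(v_n^+)$ pointwise (since $f\equiv 0$ on $\R_-$), so the nonlinear terms of $I_\lambda$ and $I'_\lambda$ are literally unchanged; with $|v_n|$ the difference $f(|v_n|)-f(v_n)=f(v_n^-)$ is nonzero on $\{v_n<0\}$ and must be estimated via the growth bounds (f$_1$)--(f$_2$) together with $\|v_n^-\|\to 0$. This works, but your phrase \emph{``by continuity of $I_\lambda$ and $I'_\lambda$''} is not the right justification: $\{v_n\}$ is not convergent, only bounded, so pointwise continuity of $I'_\lambda$ does not transfer Palais--Smale-ness along the nearby sequence; uniform continuity of the Nemytskii operator on bounded sets is not automatic at critical growth. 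You need the direct estimate on $\int f(v_n^-)\varphi$ (and similarly on the Kirchhoff term, which is quadratic hence Lipschitz on bounded sets), exactly as the paper carries out for $v_n^+$. Second, your symmetrization step is in fact \emph{simpler} than the paper's: since $u_n=|v_n|$ is nonnegative, $|u_n|^*=|v_n|^*$ is immediate, so you only need the triangle inequality and $\|v_n^-\|_{2^*_\alpha}\to 0$; the paper, working with $u_n^+$, must additionally invoke the $L^p$-contractility of the Schwarz rearrangement to compare $(u_n^+)^*$ with $|u_n|^*$. So once the continuity remark is replaced by the direct estimate, your argument is a valid and marginally streamlined variant of the original.
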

\begin{proof}
For a.a.\ $\lambda\in J$,
a bounded (PS)-sequence $\{u_n\}_{n\in\N}\subset \H$ for $I_\lambda$
is provided by combining Theorem~\ref{Thm:MP} with Lemmas \ref{Lem:MP1} and \ref{decr}. Furthermore,  if \eqref{v4} holds,
using Lemma \ref{symm-vers} in place of Theorem~\ref{Thm:MP}, we also get $\|u_n-|u_n|^*\|_{2^*_\alpha}\to 0$. Next we show that
we can assume that $u_n$ is nonnegative. Indeed, we
know that $\langle I'_\lambda(u_n), u_n^-\rangle=\langle\mu_n,u^-_n\rangle$ with $\mu_n\to 0$ in $\H'$ as $n\to\infty$, with
$u_n^- = \min\{u_n,0\}$, namely ($f(s)=0$ for $s\leq 0$)
\begin{equation*}
%\label{fu}
\left(a+b\int_{\R^N}|(-\triangle)^\frac{\alpha}{2} u_n|^2dx\right)\int_{\R^N}(-\triangle)^\alpha u_nu_n^-dx
+\int_{\R^N}V(x)|u_n^-|^2dx=\langle\mu_n,u^-_n\rangle.
\end{equation*}
As it is readily checked, for all $x,y\in\R^N$, we have
$$
(u_n(x)-u_n(y))(u_n^-(x)-u_n^-(y)) \geq (u_n^-(x)-u_n^-(y))^2,
$$
%and
%$
%[u_n(x)-u_n(y)][u_n^-(x)-u_n^-(y)]\ge[u_n^+(x)-u_n^+(y)]^2
%$
which yields that
\begin{align*}
2C(n,\alpha)^{-1}\int_{\R^3}(-\triangle)^\alpha u_nu_n^-dx &=\int_{\R^{2N}}\frac{[u_n(x)-u_n(y)][u_n^-(x)-u_n^-(y)]}{|x-y|^{N+2\alpha}}dxdy\\
&\ge\int_{\R^{2N}}\frac{[u_n^-(x)-u_n^-(y)]^2}{|x-y|^{N+2\alpha}}dxdy=2C(n,\alpha)^{-1}\|u_n^-\|_{D^{\alpha,2}}^2.
\end{align*}
Thus $\|u_n^-\|=o_n(1)$, which also yields that $\{ u^+_n\}_{n\in\N}$
is bounded. We can now prove that $I_\lambda(u^+_n )\rightarrow c_\lambda$ and $I'_\lambda(u^+_n)\rightarrow0$.
Of course $\|u_n\|^2=\|u_n^+\|^2+o_n(1)$ and
$$
\left(\int_{\R^N}|(-\triangle)^{\frac{\alpha}{2}}u_n|^2dx\right)^2=\left(\int_{\R^N}|(-\triangle)^{\frac{\alpha}{2}}u_n^+|^2dx\right)^2+o_n(1).
$$
Notice that from (f$_1$)-(f$_2$), we get
$$
\left|\int_{\R^N} F(u_n)dx-\int_{\R^N} F(u_n^+)dx\right|
\leq C\int_{\R^N}(|u_n|+|u_n|^{2^*_\alpha-1})|u_n^-|\leq C\|u_n^-\|_2+C\|u_n^-\|_{2^*_\alpha}=o_n(1).
$$
This shows that $I_\lambda(u^+_n )\rightarrow c_\lambda$.
We claim that $I'_\lambda(u^+_n)\to 0$. Setting
$w_n:=I'_\lambda(u_n)-I'_\lambda(u^+_n)$, it is enough to prove that $w_n\to 0$ in $\H'$. For any $\varphi\in\H$ with $\|\varphi\|_\H\leq 1,$
we have
\begin{align*}
\langle w_n,\varphi\rangle &=\left(a+b\int_{\R^N}|(-\triangle)^\frac{\alpha}{2} u_n|^2dx\right)\int_{\R^N}(-\triangle)^{\alpha/2} u_n (-\triangle)^{\alpha/2}\varphi dx \\
&-\left(a+b\int_{\R^N}|(-\triangle)^\frac{\alpha}{2} u_n^+|^2dx\right)\int_{\R^N}(-\triangle)^{\alpha/2} u_n^+ (-\triangle)^{\alpha/2}\varphi dx\\
&+\int_{\R^N}V(x)u_n^-\varphi dx-\lambda\int_{\R^N} (f(u_n)-f(u_n^+))\varphi dx \\
&=\left(a+b\int_{\R^N}|(-\triangle)^\frac{\alpha}{2} u_n^+|^2dx\right)\int_{\R^N}(-\triangle)^{\alpha/2} u_n^- (-\triangle)^{\alpha/2}\varphi dx \\
&+\int_{\R^N}V(x)u_n^-\varphi dx-\lambda\int_{\R^N} f(u_n^-)\varphi dx +\langle \xi_n,\varphi\rangle,
\end{align*}
for some $\xi_n\to 0$ in $\H'$. Then, using (f$_1$)-(f$_2$) again, $|\langle w_n,\varphi\rangle |\leq C\|u^-_n\|_{\H}+\|\xi_n\|_{\H'},$
proving the claim. Observe now that by the triangular inequality and the contractility
property of the Schwarz symmetrization in $L^p$-spaces
(i.e.\ $\|w^*-z^*\|_p\leq \|w-z\|_p$ for all $w,z\in L^p(\R^N)$ with $w,z\geq 0$), we get
\begin{align*}
& \left| \|u_n^+-(u_n^+)^*\|_{2^*_\alpha}- \|u_n-|u_n|^*\|_{2^*_\alpha} \right| \\
&\leq \|u_n^-+((u_n^+)^*-|u_n|^*)\|_{2^*_\alpha}\leq \|u_n^-\|_{2^*_\alpha}+\|(u_n^+)^*-|u_n|^*\|_{2^*_\alpha} \\
&\leq \|u_n^-\|_{2^*_\alpha}+\|u_n^+-|u_n|\|_{2^*_\alpha}=2\|u_n^-\|_{2^*_\alpha}\leq C\|u_n^-\|_{\H}=o_n(1).
\end{align*}
Since $\|u_n-|u_n|^*\|_{2^*_\alpha}\to 0$, we have  $\|u_n^+-(u_n^+)^*\|_{2^*_\alpha}\to 0$ as $n\to\infty$.
This ends the proof.
\end{proof}

%%%%%%%%%%%%%%%%%%%%%%%%%%%%%%%%%%%%%%%%%%%%%%%%%%%%%%%%%%%%%%%%%%%%%%%%%%%%%%%%%%%%%%%%%%%%%%%%%
%%%%%%%%%%%%%%%%%%%%%%%%%%%%%%%%%%%%%%%%%%%%%%%%%%%%%%%%%%%%%%%%%%%%%%%%%%%%%%%%%%%%%%%%%%%%%%%%%
%%%%%%%%%%%%%%%%%%%%%%%%%%%%%%%%%%%%%%%%%%%%%%%%%%%%%%%%%%%%%%%%%%%%%%%%%%%%%%%%%%%%%%%%%%%%%%%%%
%%%%%%%%%%%%%%%%%%%%%%%%%%%%%%%%%%%%%%%%%%%%%%%%%%%%%%%%%%%%%%%%%%%%%%%%%%%%%%%%%%%%%%%%%%%%%%%%%
%%%%%%%%%%%%%%%%%%%%%%%%%%%%%%%%%%%%%%%%%%%%%%%%%%%%%%%%%%%%%%%%%%%%%%%%%%%%%%%%%%%%%%%%%%%%%%%%%
%%%%%%%%%%%%%%%%%%%%%%%%%%%%%%%%%%%%%%%%%%%%%%%%%%%%%%%%%%%%%%%%%%%%%%%%%%%%%%%%%%%%%%%%%%%%%%%%%

%{\color{red}{$\bigstar$$\bigstar$$\bigstar$$\bigstar$$\bigstar$$\bigstar$}}

%\section{Analysis of Palais-Smale sequences}

\section{Upper estimate of $c_\lambda$ and limit problems}

\noindent
%We give the profile decomposition of the (PS) sequence $\{u_n\}$ of $I_\lambda$
%for almost every $\lambda\in[\frac{1}{2},1]$.

In this section, we give an upper estimate of the mountain pass value $c_\lambda$. Moreover, the corresponding limit problem is discussed.

\subsection{An energy estimate}
Next we provide a crucial energy estimate for $c_\lambda$.

\begin{lemma}[Energy estimate]\label{Lem:level}
	Suppose that {\rm (f$_1$)}-{\rm(f$_3$)} and {\rm (V$_1$)-\rm(V$_3$)} hold. For any $\lambda\in[\frac{1}{2},1]$, assume that
	$$
	q\in(\frac{4\alpha}{N-2\alpha}, 2_\alpha^*)
	\quad
	\text{or}
	\quad
	\text{$q\in(2,\frac{4\alpha}{N-2\alpha}]$ with $D$ large enough}.
	$$
	Then we have
	$$
	c_\lambda<c_\lambda^*,\qquad
	c_\lambda^*:=\frac{a S_\alpha}{2}T^{N-2\alpha}+\frac{bS_\alpha^2}{4}T^{2N-4\alpha}-\frac{\lambda}{2_\alpha^*}T^{N},
	$$
	where $T=T(\lambda)>0$ continuously depends on $\lambda$.
\end{lemma}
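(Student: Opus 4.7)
The plan is to bound the mountain pass value $c_\lambda$ from above by evaluating $I_\lambda$ along a ray $t\mapsto tu_\epsilon$, where $u_\epsilon$ is a concentrated cutoff of a minimizer of the fractional Sobolev quotient $S_\alpha$. Concretely, I would take $U\in D^{\alpha,2}(\R^N)$ realizing $S_\alpha$, normalized by $\|U\|_{D^{\alpha,2}}^2=S_\alpha$ and $\|U\|_{2^*_\alpha}^{2^*_\alpha}=1$, fix a cutoff $\eta\in C_c^\infty(B_1)$ with $\eta\equiv 1$ on $B_{1/2}$, and set $u_\epsilon(x):=\eta(x)\epsilon^{-(N-2\alpha)/2}U(x/\epsilon)\in\H^+$. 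Invoking the fractional analogues of the Brezis--Nirenberg estimates, one has the standard expansions
\begin{align*}
\|u_\epsilon\|_{D^{\alpha,2}}^2=S_\alpha+O(\epsilon^{N-2\alpha}),\quad
\|u_\epsilon\|_{2^*_\alpha}^{2^*_\alpha}=1+O(\epsilon^N),\quad
\|u_\epsilon\|_2^2=O(\epsilon^{N-2\alpha}),
\end{align*}
the last because $N<4\alpha$ in both dimensional regimes of the theorem.

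Using (f$_3$) to bound $F(s)\geq s^{2^*_\alpha}/2^*_\alpha+Ds^q/q$ for $s\geq 0$, I obtain
\[
I_\lambda(tu_\epsilon)\leq \Phi_\epsilon(t)-\frac{\lambda Dt^q}{q}\|u_\epsilon\|_q^q,\quad
\Phi_\epsilon(t):=\frac{at^2}{2}\|u_\epsilon\|_{D^{\alpha,2}}^2+\frac{t^2}{2}\!\int_{\R^N}\!V u_\epsilon^2+\frac{bt^4}{4}\|u_\epsilon\|_{D^{\alpha,2}}^4-\frac{\lambda t^{2^*_\alpha}}{2^*_\alpha}\|u_\epsilon\|_{2^*_\alpha}^{2^*_\alpha}.
\]
I would then reduce the problem to the scalar profile $\Psi_\lambda(t):=\frac{aS_\alpha}{2}t^2+\frac{bS_\alpha^2}{4}t^4-\frac{\lambda}{2^*_\alpha}t^{2^*_\alpha}$: the substitution $T^{N-2\alpha}=t^2$ identifies $\max_{t>0}\Psi_\lambda(t)$ with $c_\lambda^*$, and since $2^*_\alpha>4$ in our dimensional range, $\Psi_\lambda$ admits a unique positive maximizer $t_*(\lambda)$, a nondegenerate critical point depending continuously on $\lambda$ by the implicit function theorem, so that $T(\lambda)=t_*(\lambda)^{2/(N-2\alpha)}$ does too. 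Since $\Phi_\epsilon\to\Psi_\lambda$ uniformly on compact $t$-intervals with $\Psi_\lambda(t)\to-\infty$ as $t\to\infty$, the maximizer of $\Phi_\epsilon$ stays in a fixed compact subset of $(0,\infty)$ and $\max_t\Phi_\epsilon(t)\leq c_\lambda^*+C\epsilon^{N-2\alpha}$.

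Evaluating at the maximizer $t_\epsilon$ (which by a perturbation argument stays near $t_*(\lambda)$) yields
\[
\max_{t\geq 0}I_\lambda(tu_\epsilon)\leq c_\lambda^*+C\epsilon^{N-2\alpha}-c\lambda D\,\|u_\epsilon\|_q^q.
\]
The conclusion now splits according to $q$. For $q\in(4\alpha/(N-2\alpha),2^*_\alpha)$ of part (ii), one has $q>N/(N-2\alpha)$ and hence $\|u_\epsilon\|_q^q\sim \epsilon^{N-q(N-2\alpha)/2}$ with exponent strictly smaller than $N-2\alpha$; so for any $D>0$ the negative term beats the positive error as $\epsilon\to 0$, giving $c_\lambda<c_\lambda^*$. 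For $q\in(2,4\alpha/(N-2\alpha)]$ of part (i), this $\epsilon$-scaling fails, so I would instead fix $\epsilon=\epsilon_0$ (any positive value) and treat $D$ as the large parameter: then $\Phi_{\epsilon_0}$ and $\|u_{\epsilon_0}\|_q^q$ are fixed positive constants, the maximizer of $\Phi_{\epsilon_0}(t)-(\lambda D/q)t^q\|u_{\epsilon_0}\|_q^q$ shifts toward the origin like $t\sim D^{-1/(q-2)}$ as $D\to\infty$, and an elementary computation shows its maximum is $O(D^{-2/(q-2)})\to 0$; since $c_\lambda^*>0$ uniformly for $\lambda\in[1/2,1]$, choosing $D\geq D_1$ large enough yields the claim.

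The hardest part will be deriving the fractional norm expansions with sharp remainders, in particular $\|u_\epsilon\|_{D^{\alpha,2}}^2=S_\alpha+O(\epsilon^{N-2\alpha})$: unlike the local case, the Gagliardo seminorm is a double integral over $\R^{2N}$ and the contribution from the cutoff transition region must be handled with care. A secondary difficulty, already flagged by the authors in the introduction, is that $T(\lambda)$ solves a fractional algebraic equation with no closed form, so existence, positivity, and continuous dependence on $\lambda$ have to be argued abstractly rather than read off a formula.
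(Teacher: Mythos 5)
Your proposal is correct and follows essentially the same route as the paper: Brezis--Nirenberg-type cutoff $u_\epsilon$ of the Aubin--Talenti profile, estimate of $I_\lambda$ along a one-parameter family through $u_\epsilon$ using $\rm (f_3)$, and comparison to the scalar profile whose maximum defines $c_\lambda^*$, with the $D\,\|u_\epsilon\|_q^q$ term beating the $O(\epsilon^{N-2\alpha})$ error from the gradient and $V$ contributions. The only real variation is the choice of path: you use the fiber map $t\mapsto t u_\epsilon$, giving the profile $\Psi_\lambda(t)=\tfrac{aS_\alpha}{2}t^2+\tfrac{bS_\alpha^2}{4}t^4-\tfrac{\lambda}{2_\alpha^*}t^{2_\alpha^*}$, whereas the paper uses the dilation $t\mapsto v_\epsilon(\cdot/t)$, producing $K$ in the $T$-variable directly; the two are conjugate under $T=t^{2/(N-2\alpha)}$, so they yield the same critical value $c_\lambda^*$ and the same error structure. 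Your treatment of case (i) (freeze $\epsilon=\epsilon_0$ and push $D\to\infty$, noting the maximizer shifts like $D^{-1/(q-2)}$ and the max like $D^{-2/(q-2)}$, versus the paper's coupled choice $D\gtrsim\epsilon^{(N-2\alpha)q/2-2\alpha-1}$ with $\epsilon$ small) is a clean alternative and arguably more transparent; just be sure to make the uniformity in $\lambda\in[\tfrac12,1]$ explicit (it holds since the $D$-term carries the factor $\lambda\ge\tfrac12$ and $\inf_\lambda c_\lambda^*>0$), and to confirm the expansion $\|u_\epsilon\|_2^2=O(\epsilon^{N-2\alpha})$ really does require $N<4\alpha$ so that $2<N/(N-2\alpha)$, exactly the regime of Theorem~1.1. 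Your identification of the two technical obstacles (sharp Gagliardo seminorm expansion for the cutoff, and the implicit characterization of $T(\lambda)$) matches the paper, which resolves the first by citing \cite{Servadei13--} and the second by a direct sign analysis of $\tilde K(t)=aS_\alpha+bS_\alpha^2 t^{N-2\alpha}-\lambda t^{2\alpha}$; your IFT argument for the uniqueness and continuity of $T(\lambda)$ is an equally valid route.
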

\begin{proof}
			Let
			$\eta\in C_0^\infty(\R^3)$ be a cut-off function with support in $B_2(0)$ such that $\eta\equiv1$ on $B_1(0)$ and $\eta\in [0,1]$ on $B_2(0)$.
			It is well known that $S_\alpha$ is achieved by
			$$
			{\mathcal T}(x):=\kappa {\left(\mu^2+|x-x_0|^2\right)^{-\frac{N-2\alpha}{2}}}
			$$
			for arbitrary $\kappa\in\R$, $\mu>0$ and $x_0\in\R^N$. Then, taking $x_0=0$,
			we can define
			$$
			v_\varepsilon(x):=\eta(x)u_\e(x),  \quad			
			u_\e(x)=\varepsilon^{-\frac{N-2\alpha}{2}}u^*(x/\eps),
			 \quad u^*(x):=\frac{{\mathcal T}\big(x/S_\alpha^{1/(2\alpha)}\big)}{\|{\mathcal T}\|_{2^*_\alpha}}.
			$$
			Then $(-\Delta)^{\alpha}u_\e=|u_\e|^{2^*_\alpha-2}u_\e$ and $\|(-\triangle)^{\frac{\alpha}{2}}u_\e\|_2^2=\|u_\e\|_{2^*_\alpha}^{2^*_\alpha}=S_\al^{\frac{N}{2\al}}$. As in \cite{Servadei13--}, we have
			\begin{align}\label{eqn:3iu3}
				A_\e &:=\int_{\R^N}|(-\triangle)^{\frac{\alpha}{2}} v_\varepsilon(x)|^2dx= S_\al^{\frac{N}{2\al}}+\O(\varepsilon^{N-2\alpha}).
			\end{align}
			On the other hand, for any $q\in[2,2_\alpha^*)$, we obtain
			$$
			\int_{\R^N}|v_\varepsilon|^{q}dx\ge\varepsilon^{N-\frac{(N-2\al)q}{2}}\kappa^q\|{\mathcal T}\|_{2^*_\alpha}^{-q}|\mathbb{S}_{N-1}|S_\alpha^{N/(2\alpha)}\int_0^{\frac{1}{\e S_\al^{1/(2\al)}}}\frac{r^{N-1}}{(\mu^2+r^2)^{\frac{(N-2\al)q}{2}}}dr,
			$$
            where $\mathbb{S}_{N-1}$ is the unit sphere in $\R^N$.
		    Observe that, as $\e\rg0$,
			$$
			\int_0^{\frac{1}{\e S_\al^{1/(2\al)}}}\frac{r^{N-1}}{(\mu^2+r^2)^{\frac{(N-2\al)q}{2}}}dr
			\bcs
			\rg c\in(0,\iy),\,\,&\mbox{if}\,\,\,q>\frac{N}{N-2\al},\\
			=\O(\log{(\frac{1}{\e})}),\,\,&\mbox{if}\,\,\,q=\frac{N}{N-2\al},\\
			=\O(\e^{(N-2\al)q-N}),\,\,&\mbox{if}\,\,\,q<\frac{N}{N-2\al}.
			\ecs
			$$
			Then
			\begin{align}
				\label{eqn:3iu5}
				C_\e&:=\int_{\R^N}|v_\varepsilon|^{q}dx\ge
				\bcs
				\O(\varepsilon^{N-\frac{(N-2\al)q}{2}}),\,\,&\mbox{if}\,\,\,q>\frac{N}{N-2\al},\\
				\O(\log{(\frac{1}{\e})}\varepsilon^{N-\frac{(N-2\al)q}{2}}),\,\,&\mbox{if}\,\,\,q=\frac{N}{N-2\al},\\
				\O(\e^{\frac{(N-2\al)q}{2}}),\,\,&\mbox{if}\,\,\,q<\frac{N}{N-2\al}.
				\ecs
			\end{align}
			Since $2<\frac{N}{N-2\al}$,
			$$
			\int_{\R^N}|v_\varepsilon|^2dx\ge\O(\varepsilon^{N-2\alpha}).
			$$
			Similar as above,
			$$
			\int_{\R^N}|v_\varepsilon|^2dx\le\varepsilon^{2\al}\kappa^2\|{\mathcal T}\|_{2^*_\alpha}^{-2}|\mathbb{S}_{N-1}|S_\alpha^{N/(2\alpha)}\int_0^{\frac{2}{\e S_\al^{1/(2\al)}}}\frac{r^{N-1}}{(\mu^2+r^2)^{N-2\al}}dr\le\O(\varepsilon^{N-2\alpha}).
			$$
			So that we have
			\begin{align}\label{eqn:3iu4}
				B_\e&:=\int_{\R^N}|v_\varepsilon|^2dx=\O(\varepsilon^{N-2\alpha}).
			\end{align}
			As can be seen in \cite{Servadei13--}, it holds
			$$
			D_\e:=\int_{\RN}|v_\e|^{2^*_\alpha}dx=S_\al^{\frac{N}{2\al}}+\O(\e^N).
			$$
	\noindent
	{\bf Step 1.} For any $\e>0$ small there exists $t_0>0$ such that $I_\la(\g_\e(t_0))<0$, where $\g_\e(t):=v_\varepsilon(\cdot/t)$. Indeed, by {\rm (V$_2$)} and \rm (f$_3$), for any $t>0$,
	\begin{align}\lab{yinyong}
		 I_\la(\g_\e(t))&\le\frac{a}{2}\int_{\R^N}|(-\triangle)^{\frac{\alpha}{2}}\g_\e(t)|^2dx+\frac{b}{4}\left(\int_{\R^N}|(-\triangle)^{\frac{\alpha}{2}}\g_\e(t)|^2dx\right)^2\nonumber\\
		 &\,\,\,\,\,\,\,+\frac{V_\iy}{2}\int_{\R^N}|\g_\e(t)|^2dx-\lambda\int_{\RN}\left[\frac{|\g_\e(t)|^{2_\alpha^*}}{2_\alpha^*}+D\frac{|\g_\e(t)|^q}{q}\right]dx\nonumber\\
		&=\frac{aA_\e}{2}t^{N-2\alpha}+\frac{bA_\e^2}{4}t^{2N-4\alpha}+\left(\frac{V_\iy B_\e}{2}-\frac{\la D_\e}{2_\alpha^*}-\frac{\la DC_\e}{q}\right)t^{N}.
	\end{align}
	Noting that $2\al<N<4\al$, we have $0<2N-4\al<N$. Then by \re{eqn:3iu4},
	$$
		\frac{V_\iy B_\e}{2}-\frac{\la D_\e}{2_\alpha^*}\rg-\frac{\la S_\al^{\frac{N}{2\al}}}{2_\alpha^*},\,\,\mbox{as}\,\,\,\e\rg0.
		$$
	So it follows from \re{eqn:3iu3} that for any $\e>0$ small enough, $I_\la(\g_\e(t)\rg-\iy$ as $t\rg+\iy$. Then there exists $t_0>0$ such that $I_\la(\g_\e(t_0))<0$.
	\vskip0.1in
	\noindent
	{\bf Step 2.} Notice that, as $t\rg 0^+$, we have
	$$
	\int_{\R^N}\left[|(-\triangle)^{\frac{\alpha}{2}}\g_\e(t)|^2+|\g_\e(t)|^2\right]dx=t^{N-2\al}A_\e+t^NB_\e\rg0
	$$
	uniformly for $\e>0$ small. We set $\g_\e(0)=0$. Then $\g_\e(t_0\cdot)\in\G_\la$, where $\G_\la$ is as in Theorem \ref{Thm:MP} and  $$
	c_\lambda\leq\sup\limits_{t\ge0}I_\lambda(\g_\e(t)).
	$$
	Recalling that $c_\la>0$, by \re{yinyong}, there exists $t_\e>0$ such that
	$$
	\sup\limits_{t\ge0}I_\lambda(\g_\e(t))=I_\lambda(\g_\e(t_\e)).
	$$
	By \re{eqn:3iu3}, \re{eqn:3iu5} and \re{yinyong}, we get $I_\lambda(\g_\e(t))\rg 0^+$ as $t\rg0^+$ and $I_\lambda(\g_\e(t))\rg-\iy$ as $t\rg+\iy$ uniformly for $\e>0$ small. Then there exist $t_1,t_2>0$ (independent of $\e>0$) such that $t_1\le t_\e\le t_2$.
	Let
	$$
	J_\varepsilon(t):=\frac{aA_\e}{2}t^{N-2\alpha}+\frac{bA_\e^2}{4}t^{2N-4\alpha}-\frac{\la D_\e}{2_\alpha^*}t^{N},
	$$
	then
	$$
	c_\la\le\sup_{t\ge0}J_\varepsilon(t)+\left(\frac{V_\iy B_\e}{2}-\frac{\la DC_\e}{q}\right)t_\e^{N}
	$$
	By formula \re{eqn:3iu5}, for any $q\in(2,2_\alpha^*)$, we have
		$$
		C_\e\ge\O(\e^{N-\frac{(N-2\al)q}{2}}).
		$$
	Then by \re{eqn:3iu4}, we conclude that
	$$
	c_\la\le\sup_{t\ge0}J_\varepsilon(t)+\O(\varepsilon^{N-2\alpha})-\O(D\varepsilon^{N-\frac{(N-2\alpha)q}{2}}).
	$$
	Noting that $N-2\alpha>0$ and $N-(N-2\alpha)q/2>0$, we have $\sup_{t\ge0}J_\varepsilon(t)\ge c_\la/2$ uniformly for $\e>0$ small. As above, there are $t_3,t_4>0$ (independent of $\e>0$) such that $\sup_{t\ge0}J_\varepsilon(t)=\sup_{t\in[t_3,t_4]}J_\varepsilon(t)$. By \re{eqn:3iu3},
	\be\lab{yinyong2}
		c_\la\le\sup_{t\ge0}K(S_\al^{\frac{1}{2\al}}t)+\O(\varepsilon^{N-2\alpha})-\O(D\varepsilon^{N-\frac{(N-2\alpha)q}{2}}),
		\ee
	where
	$$K(t)=\frac{aS_\al}{2}t^{N-2\alpha}+\frac{bS_\al^2}{4}t^{2N-4\alpha}-\frac{\la}{2_\alpha^*}t^{N}.$$
	Observe that for $t>0$,
	$$
	K'(t)=\frac{(N-2\alpha)t^{N-2\alpha-1}}{2}\ti{K}(t),\,\,\mbox{where}\,\,\,\ti{K}(t):=aS_\al+bS_\al^2t^{N-2\alpha}-\lambda t^{2\al},
	$$
	and $\ti{K}'(t)=t^{N-2\alpha-1}(bS_\al^2(N-2\alpha)-2\lambda\al t^{4\al-N})$. Since $4\alpha>N$, there is a unique $T>0$ such that $\ti{K}(t)>0$ if $t\in(0,T)$ and $\ti{K}(t)<0$ if $t>T$. Hence, $T$ is the unique maximum point of $K$. Then by \re{yinyong2},
	\be\lab{yinyong3}
	c_\la\le K(T)+\O(\varepsilon^{N-2\alpha})-\O(D\varepsilon^{N-\frac{(N-2\alpha)q}{2}}).
	\ee
	If $q>4\alpha/(N-2\alpha)$, then $0<N-(N-2\alpha)q/2<N-2\alpha$, which implies by \re{yinyong3} that for any fixed $D>0$, $c_\la<K(T)$ for $\e>0$ small. If $2<q\le 4\alpha/(N-2\alpha)$, for $\e>0$ small and
	$D\ge\e^{(N-2\al)q/2-2\al-1}$, then also in this case $c_\la<K(T),$
which completes the proof.
\end{proof}

\subsection{The limit problem}
Note that $V(x)\rightarrow V_{\infty}$ as $|x|\to\infty$. For any $\lambda\in[1/2,1]$, we consider the problem
$$
\left\{
\begin{aligned}
    \left(a+b\int_{\R^N}|(-\triangle)^{\frac{\alpha}{2}}u|^2dx\right)(-\triangle)^{\alpha}u+V_{\infty} u =\lambda f(u) & \quad\mbox{in}\,\,\R^N, \\
    u\in H^\alpha(\R^N),\quad  u>0 & \quad\mbox{in}\,\, \R^N,
  \end{aligned}
\right.
$$
whose energy functional is defined by
\begin{equation*}
%\label{eqn:function2}
I_{\lambda}^\infty(u):=\frac{1}{2}\int_{\R^N}(a|(-\triangle)^{\frac{\alpha}{2}}u|^2+V_\infty u^2)dx+\frac{b}{4}\left(\int_{\R^N}|(-\triangle)^{\frac{\alpha}{2}}u|^2dx\right)^2-
\lambda\int_{\R^N}F(u)dx.
\end{equation*}

\noindent
We will use of the following Poho\v zaev type identity, whose
proof is similar as in \cite{Chang13}.

\begin{lemma}[Poho\v zaev identity]
	\label{Lem:pohozaev}
Let $u$ be a critical point of $I^\infty_\lambda$ in $\H$ for $\lambda\in
[\frac{1}{2},1]$. Then $P_\lambda(u)=0$,
\begin{equation}\label{eqn:4.below}
\aligned
P_\lambda(u)&:=
\frac{N-2\alpha}{2}\int_{\R^N}a|(-\triangle)^{\frac{\alpha}{2}}
u|^2dx+\frac{N-2\alpha}{2}b\left(\int_{\R^N}|(-\triangle)^{\frac{\alpha}{2}}
u|^2dx\right)^2\\
&\,\,\,\,\,\,+\frac{N}{2}\int_{\R^N}V_\infty u^2dx-N\lambda\int_{\R^N}F(u)dx.
\endaligned
\end{equation}
Notice that $P_\lambda(u)=\frac{d}{dt}I_\lambda^\infty(u(\cdot/t))\big|_{t=1}$.
\end{lemma}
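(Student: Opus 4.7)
The plan is to exploit the fact that, once a critical point $u$ is fixed, the Kirchhoff coefficient $A_u:=\int_{\R^N}|(-\triangle)^{\alpha/2}u|^2dx$ is merely a constant, so the Euler--Lagrange equation of $I^\infty_\lambda$ reduces to a fractional Schr\"odinger equation
$$c(-\triangle)^\alpha u+V_\infty u=\lambda f(u)\quad\text{in }\R^N,\qquad c:=a+bA_u,$$
for which the classical Pohozaev identity already appears in \cite{Chang13} and will do the job.

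First I would invoke the regularity/decay theory of \cite{Felmer12}: any $u\in\H$ solving the above is bounded, H\"older continuous, and decays at infinity fast enough to justify every integration by parts used below. Next I would apply (or rederive) the fractional Pohozaev identity for the Schr\"odinger equation, following \cite{Chang13}. The standard route is the Caffarelli--Silvestre extension $w\in H^1(\R^{N+1}_+;y^{1-2\alpha})$ of $u$, which turns $(-\triangle)^\alpha u$ into the Dirichlet--Neumann map of a degenerate-elliptic local equation on the upper half-space. Multiplying this local equation by the Pohozaev multiplier $(x,y)\cdot\nabla_{(x,y)}w$, integrating on $B_R^+\subset\R^{N+1}_+$, letting $R\to\infty$ while using the decay to kill the boundary terms, and tracing back to $u$ yields
$$\frac{N-2\alpha}{2}\,c\int_{\R^N}|(-\triangle)^{\alpha/2}u|^2dx+\frac{N}{2}V_\infty\int_{\R^N}u^2dx=N\lambda\int_{\R^N}F(u)dx.$$
Substituting $c=a+bA_u$ splits the first summand as $\tfrac{N-2\alpha}{2}a\int|(-\triangle)^{\alpha/2}u|^2dx+\tfrac{N-2\alpha}{2}bA_u^2$, which is exactly $P_\lambda(u)=0$.

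Finally, for the scaling reformulation stated after \eqref{eqn:4.below}, I would simply compute: writing $u_t(x):=u(x/t)$ one has $\|(-\triangle)^{\alpha/2}u_t\|_2^2=t^{N-2\alpha}A_u$, $\|u_t\|_2^2=t^N\|u\|_2^2$, and $\int F(u_t)dx=t^N\int F(u)dx$, so
$$I^\infty_\lambda(u_t)=\frac{a\,t^{N-2\alpha}}{2}A_u+\frac{b\,t^{2N-4\alpha}}{4}A_u^2+\frac{V_\infty t^N}{2}\|u\|_2^2-\lambda t^N\int_{\R^N}F(u)dx,$$
and differentiation at $t=1$ reproduces $P_\lambda(u)$ verbatim.

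The main obstacle I anticipate is the rigorous vanishing of the boundary integrals on $\partial B_R^+$ as $R\to\infty$ when applying the extension-based Pohozaev argument; this requires the polynomial decay estimates on $u$ (and the corresponding decay on the extension $w$) available from \cite{Felmer12,Chang13}. The Kirchhoff correction plays no role in this step, since it only rescales the leading coefficient of the linear operator by the constant $bA_u$, leaving the structure of the argument untouched.
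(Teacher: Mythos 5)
Your proposal is correct and matches the route the paper intends: the paper itself gives no explicit proof, merely saying it is ``similar as in \cite{Chang13}'', and that reference derives the fractional Poho\v zaev identity exactly via the Caffarelli--Silvestre extension and a Rellich-type multiplier as you describe. Your key observation — that once $u$ is fixed the Kirchhoff factor $a+b\int|(-\triangle)^{\alpha/2}u|^2dx$ is just a constant coefficient, so the argument of \cite{Chang13} applies verbatim and the extra term arises merely by substituting $c=a+bA_u$ at the end — is precisely why the authors can wave at \cite{Chang13}, and your scaling check of $P_\lambda(u)=\frac{d}{dt}I^\infty_\lambda(u(\cdot/t))|_{t=1}$ is a straightforward and correct verification.
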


\begin{lemma}\label{Lem:shlu1}
For $\lambda\in[\frac{1}{2},1]$, if $w_\lambda\in \H\setminus\{0\}$ solves $P_\lambda(w_\lambda)=0$,
then there exists $\gamma_\lambda\in C([0,1], \H)$ such that $\gamma_\lambda
(0)=0$, $I^{\infty}_{\lambda}(\gamma_\lambda(1))<0$, $w_\lambda\in \gamma_{\lambda}([0,1])$,
$0\not\in \gamma_\lambda((0,1])$ and
$$
\max_{t\in[0,1]}I_\lambda^\infty(\gamma_\lambda(t))=I_\lambda^\infty(w_\lambda).
$$
\end{lemma}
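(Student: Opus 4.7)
The plan is to construct $\g_\la$ as a single dilation path of $w_\la$. Define $h(s):=I^\iy_\la(w_\la(\cdot/s))$ for $s>0$ and $h(0):=0$. A direct scaling computation, using $\|(-\triangle)^{\al/2}w_\la(\cdot/s)\|_2^2=s^{N-2\al}A$ with $A:=\|(-\triangle)^{\al/2}w_\la\|_2^2$, $B:=\|w_\la\|_2^2$, $C:=\int_{\RN}F(w_\la)\,dx$, yields
$$
h(s)=\frac{aA}{2}s^{N-2\al}+\frac{bA^2}{4}s^{2(N-2\al)}+\frac{V_\iy B}{2}s^N-\la C s^N.
$$
Clearly $h(s)\to 0$ as $s\to 0^+$, and by the Poho\v zaev identity (Lemma~\ref{Lem:pohozaev}) one has $h'(1)=P_\la(w_\la)=0$.

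Next I would show that $s=1$ is the \emph{unique} global maximum of $h$ on $(0,\iy)$. Factor $h'(s)=s^{N-2\al-1}\phi(s)$ with
$$
\phi(s)=\frac{a(N-2\al)A}{2}+\frac{b(N-2\al)A^2}{2}\,s^{N-2\al}+N\Big(\frac{V_\iy B}{2}-\la C\Big)s^{2\al}.
$$
The equation $\phi(1)=0$ together with positivity of the first two summands forces $V_\iy B/2-\la C<0$. Here the low-dimensional hypothesis $4\al>N$ (valid for $N=2,\,\al\in(\tfrac12,1)$ and $N=3,\,\al\in(\tfrac34,1)$) is essential: it gives $2\al>N-2\al$, so the negative coefficient multiplies the highest power. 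Differentiating, the bracket in $\phi'(s)=s^{N-2\al-1}\bigl[\tfrac{b(N-2\al)^2A^2}{2}+2\al N(\tfrac{V_\iy B}{2}-\la C)s^{4\al-N}\bigr]$ is strictly decreasing from a positive value to $-\iy$, so $\phi$ admits a unique critical point; since $\phi(0)>0$ and $\phi(\iy)=-\iy$, $\phi$ has exactly one zero on $(0,\iy)$, necessarily $s=1$. Thus $h'>0$ on $(0,1)$ and $h'<0$ on $(1,\iy)$, so $h$ attains its strict global maximum at $s=1$.

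The same sign analysis yields $h(s)\to-\iy$ as $s\to\iy$ (the $s^N$ term with negative coefficient dominates, using $N>N-2\al$ and $N>2(N-2\al)$), so I fix $T>1$ with $h(T)<0$ and define
$$
\g_\la(t):=\bcs w_\la(\cdot/(tT)), & t\in(0,1], \\ 0, & t=0. \ecs
$$
Continuity into $\H$ is immediate on $(0,1]$ and, at $t=0$, follows from $\|\g_\la(t)\|^2\le (tT)^{N-2\al}aA+(tT)^N V_\iy B\to 0$. The required conclusions then all read off from $h$: namely $I^\iy_\la(\g_\la(1))=h(T)<0$, $w_\la=\g_\la(1/T)$ lies in the image, $0\notin\g_\la((0,1])$ because $w_\la\not\equiv 0$, and
$$
\max_{t\in[0,1]}I^\iy_\la(\g_\la(t))=\max_{s\in[0,T]}h(s)=h(1)=I^\iy_\la(w_\la).
$$
The main obstacle is the uniqueness of the critical point of $h$: this relies crucially on $4\al>N$, since otherwise the Kirchhoff contribution $s^{2(N-2\al)}$ could outgrow the $s^N$ terms at infinity and permit additional critical points of $h$, thereby breaking the monotonicity picture above.
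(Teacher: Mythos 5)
Your proof is correct and follows essentially the same strategy as the paper: scale $w_\lambda$ by dilation, expand $I_\lambda^\infty(w_\lambda(\cdot/s))$ in powers of $s$, use $P_\lambda(w_\lambda)=0$ to identify $s=1$ as a critical point, and observe that $4\alpha>N$ forces the coefficient of $s^N$ to be negative and dominate at infinity. You supply a more explicit analysis (via the factorizations of $h'$ and $\phi'$) of why $s=1$ is the \emph{unique} global maximum and why $\gamma_\lambda$ is continuous at $t=0$, both of which the paper leaves implicit, but the underlying argument is identical.
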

\begin{proof}
Note that
$$
\aligned
I_\lambda^\infty(w_\lambda(\cdot/t))=&\frac{t^{N-2\alpha}}{2}\int_{\R^N}a|(-\triangle)^{\frac{\alpha}{2}} w_\lambda|^2dx
+\frac{bt^{2N-4\alpha}}{4}\left(\int_{\R^N}|(-\triangle)^{\frac{\alpha}{2}} w_\lambda|^2dx\right)^2\\
&+\frac{t^N}{2}\int_{\R^N}V_\infty w_\lambda^2dx-t^N\lambda\int_{\R^N}F(w_\lambda)dx=0,
\endaligned
$$
which, by \eqref{eqn:4.below}, yields
$$
\lim_{t\to\infty}I_\lambda^\infty(w_\lambda(\cdot/t))<0.
$$
Then there is $t_0>0$ such that
$I_\lambda^\infty(w_\lambda(\cdot/t_0))<0$. Let $\gamma_\lambda(t)=w_\lambda(\cdot/tt_0))$
for $0<t\leq 1$ and $\gamma_\lambda(0)=0$.
Then $\gamma_\lambda\in C([0,1], \H)$, $w_\lambda\in \gamma_{\lambda}([0,1])$ and
$\max_{t\in[0,1]}I_\lambda^\infty(\gamma_\lambda(t))=I_\lambda^\infty(w_\lambda)$
as $t=t_0^{-1}$ is the unique maximum point of $t\mapsto I_\lambda^\infty(\gamma_\lambda(t))$ by Lemma~\ref{Lem:pohozaev}.
\end{proof}

\section{Behaviour of Palais-Smale sequences}
By Corollary \ref{bps}, for almost every $\lambda\in[1/2,1]$, there exists
a bounded Palais-Smale sequence $\{u_n\}_{n\in\N}\subset \H$ for $I_\lambda$
at the level $c_\lambda$. Then there exists
a subsequence of $\{u_n\}_{n\in\N}$, still denoted by $\{u_n\}_{n\in\N}$, such that $u_n\rightharpoonup u_0$ in $\H$
and $u_n\to u_0$ a.e.\ in $\R^N$ as $n\to\infty$. Let $v_n^1:=u_n-u_0$, then $v_n^1\rightharpoonup0$ in $\H$ and
$v_n^1\to 0$ a.e.\ in $\R^N$.

\subsection{Splitting lemmas} Let us set
$$
g(t):=f(t)-(t^{+})^{2_\alpha^*-1}, \quad\,\,\, G(t):=\int_{0}^t g(s)ds.
$$
In order to get the profile decomposition of $\{u_n\}_{n\in\N}$, we state the following
splitting lemmas.

	\begin{lemma}[Splitting lemma I]
		\label{Lem:global-0}
		We have
		\begin{equation}\label{eqn:4.3-1}
			\left|\int_{\R^N}(g(u_n)-g(u_0)-g(v_n^1))\varphi dx\right| \leq o_n(1)\|\varphi\|,
		\end{equation}
		where $o_n(1)\rg 0$ as $n\rg \iy$, uniformly for any $\varphi\in C_0^{\infty}(\R^N)$.
	\end{lemma}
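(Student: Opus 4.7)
The claim is a Brezis--Lieb type splitting lemma, whose crucial ingredient is the \emph{subcritical} nature of $g(t)=f(t)-(t^+)^{2^*_\alpha-1}$. From (f$_1$) one has $g(t)/t\to 0$ as $t\to 0$, and from (f$_2$) one has $g(t)/t^{2^*_\alpha-1}\to 0$ as $t\to+\infty$; moreover $g(t)=0$ for $t\le 0$. Combining these two limits with the continuity of $g$, for every $\delta>0$ and every fixed $q\in(2,2^*_\alpha)$ there exists $C_\delta>0$ such that
$$
|g(t)|\le \delta(|t|+|t|^{2^*_\alpha-1})+C_\delta|t|^{q-1},\qquad t\in\R.
$$

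The plan is to establish a pointwise Brezis--Lieb type inequality for $h_n:=g(u_n)-g(u_0)-g(v_n^1)$. Since $v_n^1\to 0$ a.e.\ in $\R^N$ and $g$ is continuous, $h_n\to 0$ a.e. Splitting $\R^N$ into the two zones $\{|v_n^1|\le |u_0|\}$ and $\{|v_n^1|>|u_0|\}$ and applying the growth estimate above separately in each zone, I expect to derive
$$
|h_n(x)|\le \delta\bigl(|v_n^1(x)|+|v_n^1(x)|^{2^*_\alpha-1}\bigr)+W_n(x),\qquad x\in\R^N,
$$
where $W_n\ge 0$, $W_n\to 0$ a.e., and $\{|W_n|^{q'}\}$ is uniformly integrable on $\R^N$. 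Uniform integrability will follow from the subcritical $|t|^{q-1}$ correction, the fixed $L^q$-integrability of $u_0$ (a consequence of $u_0\in\H$), and the $L^q$-boundedness of $\{v_n^1\}$ coming from $\H\hookrightarrow L^q$; Vitali's convergence theorem then gives $\|W_n\|_{q'}\to 0$ as $n\to\infty$.

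Testing $h_n$ against any $\varphi\in C_0^\infty(\R^N)$ and using H\"older's inequality together with the continuous embeddings $\H\hookrightarrow L^r(\R^N)$ for $r\in\{2,q,2^*_\alpha\}$ and the $\H$-boundedness of $\{v_n^1\}$, I obtain
$$
\Bigl|\int_{\R^N}h_n\,\varphi\,dx\Bigr|\le C\,\delta\,\|\varphi\|\;+\;\|W_n\|_{q'}\,\|\varphi\|_{q}\;\le\;\bigl(C\delta+o_n(1)\bigr)\|\varphi\|.
$$
Letting first $n\to\infty$ and then $\delta\to 0$ yields the desired estimate.

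The main obstacle is engineering the pointwise bound so that the critical-order term $|v_n^1|^{2^*_\alpha-1}$ carries \emph{only} the small coefficient $\delta$, while the residual $W_n$ remains strictly subcritical (of at most $|t|^{q-1}$ type) so that Vitali-type control applies. This forces the case split on the relative size of $|v_n^1|$ versus $|u_0|$ together with simultaneous use of all three terms of the growth bound: on $\{|v_n^1|>|u_0|\}$ the near-cancellation between $g(u_0+v_n^1)$ and $g(v_n^1)$ leaves only an $O(\delta)$ critical piece plus a genuinely subcritical residue, whereas on the complementary zone all three pieces $g(u_n),g(u_0),g(v_n^1)$ are absorbed into an $|u_0|$-driven quantity that is fixed, $L^{q'}$-integrable, and pointwise-majorizes a sequence converging to $0$ a.e.
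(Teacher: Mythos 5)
There is a genuine gap. The bound $|g(t)|\le\delta(|t|+|t|^{2^*_\alpha-1})+C_\delta|t|^{q-1}$ applied term by term to $g(u_n)$ and $g(v_n^1)$ (which is all the growth estimate by itself can give you) produces, on the zone $\{|v_n^1|>|u_0|\}$, a residual $W_n$ of size $C_\delta|v_n^1|^{q-1}$. For $\|W_n\|_{q'}\to 0$ you would need $\|v_n^1\|_q\to 0$; but $v_n^1$ is only weakly null in $\H$ and may carry translating bubbles (this is exactly the non-vanishing alternative treated later in Lemma~\ref{Lem:global}), so $\|v_n^1\|_q$ has no reason to vanish. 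Equivalently, $\{|W_n|^{q'}\}$ is in general not uniformly integrable on $\R^N$, because tightness fails. The ``near-cancellation between $g(u_0+v_n^1)$ and $g(v_n^1)$'' that you invoke on this zone is never actually established: the growth bound contains no cancellation, and without cancellation the subcritical residue is $|v_n^1|$-driven, not $|u_0|$-driven.

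The paper manufactures the needed cancellation with the mean value theorem. Since $f\in C^1(\R^+)$ by (f$_1$), $g\in C^1$ and pointwise $|g(u_n)-g(v_n^1)|\le|g'(v_n^1+\theta_n u_0)|\,|u_0|$. On the region where $|u_n|$ (and hence $|v_n^1+\theta_n u_0|$) stays below a fixed $2\bar D$, this yields $|g(u_n)-g(v_n^1)|\le\max_{|t|\le 2\bar D}|g'(t)|\,|u_0|$, a bound driven by the \emph{fixed} function $|u_0|$ whose $L^2$ tail over $B_R^c$ vanishes as $R\to\infty$; where $|u_n|\ge\bar D$ (and then $|v_n^1|\ge\bar D/2$ once $|u_0|<\bar D/2$), the asymptotics $g(t)/t^{2^*_\alpha-1}\to 0$ make $|g(u_n)|+|g(v_n^1)|$ directly $\epsilon$-small against $\|\varphi\|$ in the critical norm, leaving no subcritical residue at all. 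Finally, the a.e.-convergence-plus-Vitali step (your uniform-integrability idea) is carried out only on the finite-measure ball $B_R$, not on all of $\R^N$, so tightness is not an issue there. These two ingredients — mean-value cancellation controlled by $|u_0|$ on the bounded range, and confining the equi-integrability argument to $B_R$ while handling the tail via the $|u_0|$-control — are exactly what your proposal is missing and what closes the argument.
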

\begin{proof}
		For each $n\geq 1$, there exists $\theta_n\in(0,1)$ such that
		\begin{equation}
		\label{eqn:4.3-2-}
			|g(u_n)-g(v_n^1)|\leq |g'(v_n^1+\theta_n u_0)||u_0|.
		\end{equation}
		In view of (f$_1$)-(f$_3$), for any $\epsilon>0$, there exists $\bar{D}>0$ such that
		\begin{equation}\label{eqn:4.3-2+}
			|g(t)|\leq \epsilon |t|^{2_\alpha^*-1},\quad \text{for}\,|t|\geq \bar{D}/2.
		\end{equation}
		Let $\Omega_n(\bar{D}):=\{x\in\R^N:\,|u_n(x)|\geq \bar{D}\}$ and for $r>0$, $B_r:=\{x\in\RN: |x|<r\}$, $B_r^c:=\RN\setminus B_r(0)$. Since $u_0\in \H$, we have $|B_R^c\cap\{|u_0(x)|\ge \bar{D}/2\}|\rg0$ as $R\rg\iy$. Then for $\e$ given as above, there exist $R>0$
			and $\om_R\subset \R^N$ with $|\om_R|\le \Lambda_\e$ such that $|u_0(x)|<{\bar{D}/2}$ for
			$x\in B_R^c\setminus\om_R,$  where $\Lambda_\e>0$ will be chosen later small enough.
			Then, by H\"{o}lder's inequality, \eqref{eqn:4.3-2-} and
			\eqref{eqn:4.3-2+}, we have
			\begin{equation}\label{eqn:4.3-3}
				\aligned
				&\int_{B_R^c\setminus\om_R}|g(u_n)-g(v_n^1)||\varphi|dx\\
				&\leq\int_{(B_R^c\setminus\om_R)\cap\Omega_n(\bar{D})}|g(u_n)-g(v_n^1)||\varphi|dx
				+\int_{(B_R^c\setminus\om_R)\cap\Omega_n^c(\bar{D})}|g(u_n)-g(v_n^1)||\varphi|dx\\
				&\leq\epsilon C(\|u_n\|_{2_\alpha^*}^{2_\alpha^*-1}+\|v^1_n\|_{2_\alpha^*}^{2_\alpha^*-1})
				\|\varphi\|
				+\max\limits_{|t|\leq 2\bar{D}}|g'(t)|\Big(\int_{B_R^c}u_0^2(x)dx\Big)^{1/2}\|\varphi\|.
				\endaligned
			\end{equation}
			It follows from (f$_1$) and (f$_2$) that, for $\varepsilon>0$ given, there exists $C_\varepsilon=C_\eps(f)>0$ such that
			\begin{equation}\label{eqn:4.3-3+}
				\aligned
				&\int_{\om_R}|g(u_n)-g(v_n^1)||\varphi|dx\\
				&\leq \varepsilon \int_{\om_R}(|u_n|^{2_\alpha^*-1}+|v_n^1|^{2_\alpha^*-1})|\varphi|dx
				+C_\varepsilon\int_{\om_R}(|u_n|+|v_n^1|)|\varphi|dx\\
				&\leq\epsilon C(\|u_n\|_{2_\alpha^*}^{2_\alpha^*-1}+\|v^1_n\|_{2_\alpha^*}^{2_\alpha^*-1})
				\|\varphi\|
				+C_\varepsilon 	|\om_R|^\frac{2\alpha}{N}(\|u_n\|_{2_\alpha^*}+\|v^1_n\|_{2_\alpha^*})\|\varphi\|_{2_\alpha^*}.
				\endaligned
			\end{equation}
			By \eqref{eqn:4.3-3} and \eqref{eqn:4.3-3+}, by choosing
			$\Lambda_\eps$ such that $C_\eps \Lambda_\eps^{2\alpha/N}\leq \eps$,
			there exists $C>0$ with
			\begin{equation}\label{eqn:4.3-4}
				\int_{B_R^c}|g(u_n)-g(v_n^1))||\varphi| dx\leq C\epsilon\|\varphi\|.
			\end{equation}
			Moreover,
			\begin{equation}\label{eqn:4.3-4-}
				\aligned
				\int_{B_R^c}|g(u_0)||\varphi|dx&\leq C\int_{B_R^c}|u_0||\varphi|dx+
				\int_{B_R^c}|u_0|^{2_\alpha^*-1}|\varphi|dx\\
				&\leq C\Big(\int_{B_R^c}|u_0|^2dx\Big)^{1/2}\|\varphi\|
				+C\Big(\int_{B_R^c}|u_0|^{2_{\alpha}^*}dx\Big)^{(2_\alpha^*-1)/2_\alpha^*}\|\varphi\|.
				\endaligned
			\end{equation}
			It follows from  \eqref{eqn:4.3-4} and \eqref{eqn:4.3-4-} that, for $\epsilon>0$ above, 
            we choose $R>0$ above large enough such that
			\begin{equation}\label{eqn:4.3-5}
				\left|\int_{B_R^c}(g(u_n)-g(u_0)-g(v_n^1))\varphi dx\right|\leq C\epsilon\|\varphi\|,
			\end{equation}
			where $C$ is independent of $n$, $\e$ and $\varphi\in C_0^{\infty}(\R^N)$.
		On the other hand,
		\begin{equation*}
			%\label{eqn:4.3-6}
			\int_{B_R}|g(u_n)-g(u_0)||\varphi| dx\leq\Big(\int_{B_R}|g(u_n)-g(u_0)|^{2_{\alpha}^*/(2_{\alpha}^*-1)}
			dx\Big)^{(2_{\alpha}^*-1)/2_{\alpha}^*}\Big(\int_{B_R}|\varphi|^{2_{\alpha}^*}\Big)^{1/2_{\alpha}^*}.
		\end{equation*}
		Observe that
		$$
		\lim_{t\to+\infty}\frac{g^{2_{\alpha}^*/(2_{\alpha}^*-1)}(t)}{t^{2_\alpha^*}}=
		%\lim_{t\to 0^+}\frac{g(t)^{2_{\alpha}^*/(2_{\alpha}^*-1)}}{t^2}=0
		\lim_{t\to 0^+}\frac{g^{2_{\alpha}^*/(2_{\alpha}^*-1)}(t)}{t^{2^*_\alpha/(2^*_\alpha-1)}}=0.
		$$
	Then
		$|g(u_n)-g(u_0)|^{2_{\alpha}^*/(2_{\alpha}^*-1)}\rg0$ in $L^1(B_R)$.
		Hence, we deduce
		\begin{equation}\label{eqn:4.3-7}
			\int_{B_R}|g(u_n)-g(u_0)||\varphi| dx \leq o_n(1)\|\varphi\|.
		\end{equation}
		Similarly, we also obtain that
		\begin{equation}\label{eqn:4.3-8}
			\int_{B_R}|g(v^1_n)|\varphi dx \leq o_n(1)\|\varphi\|,
		\end{equation}
		for any $\varphi\in C_0^{\infty}(\R^N)$. It follows from \eqref{eqn:4.3-5},
		\eqref{eqn:4.3-7} and \eqref{eqn:4.3-8} that \eqref{eqn:4.3-1} holds.
	\end{proof}
	
	\begin{lemma}[Splitting lemma II]\label{Lem:global-2}
		We have
		$$
		\left|\int_{\RN}\left(|u_n|^{2_\al^*-2}u_n-|u_0|^{2_\al^*-2}u_0-|v_n^1|^{2_\al^*-2}v_n^1\right)\vp dx\right|\leq o_n(1)\|\vp\|,
		$$
		where $o_n(1)\rg0$ as $n\rg\iy$, uniformly for any $\vp\in C_0^\iy(\RN)$.
	\end{lemma}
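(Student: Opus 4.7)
The plan is to establish the stronger $L^{p'}$-type Brezis--Lieb statement
\begin{equation*}
w_n := |u_n|^{2_\al^*-2}u_n - |u_0|^{2_\al^*-2}u_0 - |v_n^1|^{2_\al^*-2}v_n^1 \longrightarrow 0 \quad\text{in } L^{p'}(\R^N),
\end{equation*}
where $p:=2_\al^*$ and $p':=p/(p-1)=2N/(N+2\al)$. Once this is in hand, for any $\vp\in C_0^\iy(\R^N)$, H\"older's inequality combined with the continuous embedding $\H\hookrightarrow L^{p}(\R^N)$ (which follows from Lemma \ref{normeq} and the fractional Sobolev embedding) gives
\begin{equation*}
\left|\int_{\R^N} w_n\,\vp\,dx\right| \le \|w_n\|_{p'}\|\vp\|_{p} \le C\|w_n\|_{p'}\|\vp\| = o_n(1)\|\vp\|,
\end{equation*}
which is the desired conclusion.

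Pointwise convergence $w_n\to 0$ a.e.\ in $\R^N$ is immediate from $v_n^1\to 0$ a.e., $u_n\to u_0$ a.e., and the continuity of $h(t):=|t|^{p-2}t$ with $h(0)=0$. The technical core of the argument is the elementary inequality
\begin{equation*}
|h(a+b)-h(b)|^{p'} \le \e\,|b|^{p}+C_\e\,|a|^{p}, \qquad \text{for all }a,b\in\R \text{ and all } \e>0,
\end{equation*}
which I would derive by writing $h(a+b)-h(b)=(p-1)\int_0^1|b+sa|^{p-2}a\,ds$, bounding $|b+sa|^{p-2}\le C(|b|^{p-2}+|a|^{p-2})$ (legitimate since $p>2$ thanks to $N>2\al$), raising to the $p'$-th power, and applying the $\e$-Young inequality with conjugate exponents $(p-1)/(p-2)$ and $p-1$ to the cross term $|b|^{(p-2)p'}|a|^{p'}$.

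Applying this estimate with $a=u_0$, $b=v_n^1$, and using $|w_n|^{p'}\le C|h(u_n)-h(v_n^1)|^{p'}+C|u_0|^{p}$, we obtain, for every measurable $E\su\R^N$,
\begin{equation*}
\int_E|w_n|^{p'}\,dx \le C\e\|v_n^1\|_{p}^{p} + C(C_\e+1)\int_E |u_0|^{p}\,dx.
\end{equation*}
Since $\{v_n^1\}$ is bounded in $L^{p}(\R^N)$ and $u_0\in L^{p}(\R^N)$, this bound simultaneously yields the equi-integrability of $\{|w_n|^{p'}\}_{n\in\N}$ (pick $\e$ small, then $|E|$ small) and tightness (take $E=B_R^c$ with $R$ large). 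Vitali's convergence theorem then delivers $w_n\to 0$ in $L^{p'}(\R^N)$.

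The main obstacle is the elementary Brezis--Lieb inequality: the region-splitting scheme used in Lemma \ref{Lem:global-0} does not transfer here, because $h$ has exactly critical growth at infinity, so one cannot extract a small prefactor $\e$ from terms such as $|u_n|^{p-1}$ on a tail $\{|u_n|\ge\bar D\}$, as was done for the subcritical correction $g=f-(\cdot)^{p-1}$ in the previous lemma. The $L^{p'}$ Brezis--Lieb framework bypasses this obstruction by absorbing the critical term into the equi-integrability estimate.
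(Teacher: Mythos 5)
Your proof is correct, and it takes a genuinely different route from the paper's. The paper works directly on the pairing $\int w_n\vp\,dx$ by a two--stage region split: first $\RN$ into $B_R(0)$ and its complement, controlling the outer contribution through the smallness of $\|u_0\|_{L^{2_\al^*}(B_R^c)}$ together with the pointwise bound $|h(u_n)-h(v_n^1)|\le C(|u_n|^{2_\al^*-2}+|v_n^1|^{2_\al^*-2})|u_0|$; then $B_R(0)$ is further split into $\{|v_n^1|\le r\}$, where the defect is $O(r)$, and $\{|v_n^1|\ge r\}$, whose measure tends to zero by convergence in measure, with each piece estimated against $\|\vp\|_{2_\al^*}$ by H\"older. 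You instead upgrade the assertion to a genuine Brezis--Lieb statement, $w_n\to 0$ in $L^{p'}(\RN)$ with $p=2_\al^*$, obtained from the pointwise $\e$--Young bound $|h(a+b)-h(b)|^{p'}\le\e|b|^p+C_\e|a|^p$ plus Vitali's theorem, and then conclude by a single H\"older step using $\H\hookrightarrow L^{2_\al^*}(\RN)$ (which is indeed legitimate by Lemma~\ref{normeq} and the fractional Sobolev embedding). Your elementary inequality is derived correctly — the expansion $h(a+b)-h(b)=(p-1)\int_0^1|b+sa|^{p-2}a\,ds$, the bound $(|a|+|b|)^{p-2}\le C(|a|^{p-2}+|b|^{p-2})$ for $p>2$, the identification $(p-1)p'=p$, and the Young step with exponents $\frac{p-1}{p-2}$ and $p-1$ all check out — and the Vitali step is applied properly (equi-integrability from $|E|$ small, tightness from $E=B_R^c$, and $w_n\to 0$ a.e.\ from $u_n\to u_0$ and $v_n^1\to 0$ a.e.). Your approach is cleaner and isolates a reusable strong-convergence fact, while the paper's approach avoids invoking Vitali at the cost of a more hands-on decomposition. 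One small remark: your closing paragraph somewhat overstates the obstruction — a region-splitting scheme does work for the critical term, just organized around the smallness of $u_0$ on a tail and of $v_n^1$ on a level set rather than around extracting an $\e$ from a subcritical remainder, which is precisely what the paper does; nevertheless your alternative is fully valid.
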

	\begin{proof}
		For any $\e>0$, there exists $R=R(\e)>0$ such that
		\begin{equation}\label{eqn:cl1}
			\aligned
			&\left|\int_{\RN\setminus B_R(0)}\left(|u_n|^{2_\al^*-2}u_n-|u_0|^{2_\al^*-2}u_0-|v_n^1|^{2_\al^*-2}v_n^1\right)\vp dx\right|\\
			&\le\left|\int_{\RN\setminus B_R(0)}\left(|u_n|^{2_\al^*-2}u_n-|v_n^1|^{2_\al^*-2}v_n^1\right)\vp dx\right|+\left|\int_{\RN\setminus B_R(0)}|u_0|^{2_\al^*-2}u_0\vp dx\right|\\
			&\le C\int_{\RN\setminus B_R(0)}\left(|u_n|^{2_\al^*-2}+|v_n^1|^{2_\al^*-2}\right)|u_0\vp|dx+\int_{\RN\setminus B_R(0)}|u_0|^{2_\al^*-1}|\vp|dx	\le C\e\|\vp\|.
			\endaligned
		\end{equation}
%		On the other hand,
%			\begin{equation}\label{eqn:cl2}
%		\aligned
%		&\left|\int_{B_R(0)}\left(|u_n|^{2_\al^*-2}u_n-|u|^{2_\al^*-2}u-|v_n^1|^{2_\al^*-2}v_n^1\right)\vp dx\right|\\
%		&\le  C\int_{B_R(0)}\left||u_n|^{2_\al^*-2}+|u|^{2_\al^*-2}+|v_n^1|^{2_\al^*-2}\right||v_n^1\vp|dx
%		\endaligned
%		\end{equation}
		On the other hand, for every $r>0$, we have
		\begin{equation*}
			\aligned
			&\left|\int_{B_R(0)}\left(|u_n|^{2_\al^*-2}u_n-|u_0|^{2_\al^*-2}u-|v_n^1|^{2_\al^*-2}v_n^1\right)\vp dx\right|\\
			&\le \int_{B_R(0)\cap\{|v_n^1|\le r\}}\left||u_n|^{2_\al^*-2}u_n-|u_0|^{2_\al^*-2}u_0-|v_n^1|^{2_\al^*-2}v_n^1\right|\vp dx \\
			&\,\,\,\,\,+\int_{B_R(0)\cap\{|v_n^1|\ge r\}}\left||u_n|^{2_\al^*-2}u_n-|u_0|^{2_\al^*-2}u_0-|v_n^1|^{2_\al^*-2}v_n^1\right|\vp dx
			=:I_1+I_2.
			\endaligned
		\end{equation*}
		Now, there exists $r=r(R)$ such that $r|B_R(0)|^{1/2_\al^*}\leq \eps$. Therefore, we have
		\begin{align}\label{eqn:cl3}
			I_1 &\le C\int_{B_R(0)\cap\{|v_n^1|\le r\}}\left(|u_n|^{2_\al^*-2}+|u_0|^{2_\al^*-2}+|v_n^1|^{2_\al^*-2}\right)|v_n^1\vp|dx \\
			& \le C r|B_R(0)|^{1/2_\al^*}\|\vp\|\le C \e\|\vp\|. \notag
		\end{align}
		For such $r,R$ fixed above, $u_n$ converges to $u$ in measure in $B_R(0)$, i.e.\
		$|B_R(0)\cap\{|v_n|\ge r\}|\to  0$ for $n\to\infty$. Therefore, for $n\geq 1$ large,
		\begin{align}\label{eqn:cl4}
			I_2 &\le C\int_{B_R(0)\cap\{|v^1_n|\ge r\}}\Big(|u_n|^{2_\al^*-2}
			+|v^1_n|^{2_\al^*-2}\Big)|u_0\vp|dx \\
			&\,\,\,\,\,\,\, +\int_{B_R(0)\cap\{|v^1_n|\ge r\}}|u_0|^{2_\al^*-1}|\vp|dx\le C\e\|\vp\|. \notag
		\end{align}
		Then \eqref{eqn:cl1}, \eqref{eqn:cl3} and \eqref{eqn:cl4} yield the assertion.
	\end{proof}
	
	\begin{lemma}[Splitting lemma III]
		\label{Lem:global-1}
		We have
		$$
		\int_{\R^N}f(u_n)u_ndx=\int_{\R^N}f(v_n^1)v_n^1dx+\int_{\R^N}f(u_0)u_0dx+o_n(1),
		$$
		where $o_n(1)\rg0$ as $n\rg\iy$. Furthermore
		\begin{equation*}
		%\label{eqn:3.04}
		\int_{\R^N}F(u_n)dx=\int_{\R^N}F(v_n^1)dx+\int_{\R^N}F(u_0)dx+o_n(1).
		\end{equation*}
	\end{lemma}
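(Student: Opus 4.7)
I would prove both identities in parallel, starting from the decomposition $f(t)=g(t)+(t^+)^{2^*_\alpha-1}$ and the corresponding one for the primitive $F(t)=G(t)+\tfrac{1}{2^*_\alpha}(t^+)^{2^*_\alpha}$ with $G(t):=\int_0^t g(s)\,ds$. For the first identity, Lemmas \ref{Lem:global-0} and \ref{Lem:global-2} combine into
$$
\Bigl|\int_{\R^N}\bigl(f(u_n)-f(u_0)-f(v_n^1)\bigr)\varphi\,dx\Bigr|\leq o_n(1)\|\varphi\|,\qquad \varphi\in C_0^\infty(\R^N).
$$
Since $\|\cdot\|$ is equivalent to $\|\cdot\|_{H^\alpha}$ (Lemma \ref{normeq}) and $C_0^\infty(\R^N)$ is dense in $H^\alpha(\R^N)$, this estimate extends to every $\varphi\in \H$. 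Plugging in the bounded sequence $\varphi=u_n$ yields
$$
\int f(u_n)u_n\,dx=\int f(u_0)u_n\,dx+\int f(v_n^1)u_n\,dx+o_n(1).
$$

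I would then substitute $u_n=u_0+v_n^1$ in the two right-hand integrals, producing the desired $\int f(u_0)u_0$ and $\int f(v_n^1)v_n^1$, together with the cross contributions $\int f(u_0)v_n^1$ and $\int f(v_n^1)u_0$. The first cross term tends to zero because, by (f$_1$)--(f$_2$), $f(u_0)\in L^2(\R^N)+L^{2^*_\alpha/(2^*_\alpha-1)}(\R^N)$, while $v_n^1\rightharpoonup 0$ in $\H$ and hence weakly in $L^2\cap L^{2^*_\alpha}$. The second, which is the main technical obstacle, is handled by a truncation in the style of Lemma \ref{Lem:global-0}. Given $\eps>0$, pick $R$ so large that $\|u_0\|_{L^2(B_R^c)}+\|u_0\|_{L^{2^*_\alpha}(B_R^c)}<\eps$; using $|f(t)|\leq C(|t|+|t|^{2^*_\alpha-1})$ together with the uniform $L^2\cap L^{2^*_\alpha}$ bound on $v_n^1$, this produces $\int_{B_R^c}|f(v_n^1)u_0|\,dx\leq C\eps$. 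Inside $B_R$, Rellich's theorem yields $v_n^1\to 0$ in $L^p(B_R)$ for every $p\in[2,2^*_\alpha)$; combined with the interpolation $|f(t)|\leq C_\eps|t|^{p-1}+\eps|t|^{2^*_\alpha-1}$ available from (f$_1$)--(f$_2$), this gives $\int_{B_R}|f(v_n^1)u_0|\,dx=o_n(1)+O(\eps)$. Since $\eps>0$ is arbitrary, the first assertion follows.

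For the second identity I would use the same splitting $F=G+\tfrac{1}{2^*_\alpha}(\cdot)_+^{2^*_\alpha}$. The critical piece, namely $\int(u_n^+)^{2^*_\alpha}dx-\int(u_0^+)^{2^*_\alpha}dx-\int((v_n^1)^+)^{2^*_\alpha}dx\to 0$, is the classical Brezis--Lieb lemma applied in $L^{2^*_\alpha}(\R^N)$. For the remaining subcritical convergence $\int G(u_n)\,dx-\int G(u_0)\,dx-\int G(v_n^1)\,dx\to 0$, I would run the cutoff scheme of Lemma \ref{Lem:global-0} verbatim, this time integrating the scalar quantities themselves rather than pairing with a test function: outside $B_R$, exploit the mean value estimate $|G(u_n)-G(v_n^1)|\leq|g(v_n^1+\theta u_0)||u_0|$ together with the smallness of $\|u_0\|_{L^p(B_R^c)}$; inside $B_R$, use the a.e.\ convergences $u_n\to u_0$ and $v_n^1\to 0$ with Vitali's theorem and the uniform $L^2\cap L^{2^*_\alpha}$ bound to dispose of the subcritical remainder. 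Throughout, the truly delicate point is identical to the cross-term issue above: Rellich compactness is unavailable on the top power, so one must lean on the decay of the tails of $u_0$ and on the $\eps$-$C_\eps$ interpolation of $f$ to close the estimate.
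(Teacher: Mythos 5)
Your overall plan — use Lemmas~\ref{Lem:global-0} and~\ref{Lem:global-2}, a density argument, and the substitution $\varphi=u_n$, and then estimate the two cross terms — is structurally in line with the paper's proof, which also reduces the matter to cross-term estimates after factoring out the critical power via Brezis--Lieb. The weak-convergence argument for $\int f(u_0)v_n^1\,dx$, the treatment of $F$ via $F=G+\tfrac{1}{2^*_\alpha}(\cdot^+)^{2^*_\alpha}$, and the cutoff/Vitali scheme for the $G$-part are all sound. However there is a genuine error in the central estimate you invoke for the cross term $\int f(v_n^1)u_0\,dx$ on $B_R$.

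The interpolation inequality $|f(t)|\leq C_\eps|t|^{p-1}+\eps|t|^{2^*_\alpha-1}$ is false under (f$_2$). Indeed (f$_2$) says $f(t)/t^{2^*_\alpha-1}\to 1$ as $t\to\infty$, so for any $\eps<1$ and $p<2^*_\alpha$ the ratio $f(t)/(C_\eps t^{p-1}+\eps t^{2^*_\alpha-1})\to 1/\eps>1$; the bound cannot hold for large $t$. What (f$_1$)--(f$_2$) actually give you is $|f(t)|\leq \eps |t|+C_\eps|t|^{2^*_\alpha-1}$, i.e.\ the small factor sits on the \emph{subcritical} power, which is useless for the step you want. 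The estimate you need becomes available only after you peel off the critical power, because $g(t)=f(t)-(t^+)^{2^*_\alpha-1}$ satisfies $g(t)/t^{2^*_\alpha-1}\to 0$, and hence genuinely obeys $|g(t)|\leq \eps |t|^{2^*_\alpha-1}+C_\eps|t|$. This is exactly what the paper uses: it first applies Brezis--Lieb to $(\cdot^+)^{2^*_\alpha}$, then estimates $\int g(u_0)v_n^1$ and $\int g(v_n^1)u_0$ with the correct $\eps$-$C_\eps$ bound for $g$. To repair your argument, either follow that route, or keep your $f$-level splitting but treat the critical cross term separately: $((v_n^1)^+)^{2^*_\alpha-1}\to 0$ a.e.\ and is bounded in $L^{2^*_\alpha/(2^*_\alpha-1)}(B_R)$, hence converges weakly to $0$ in that space, so its pairing with $u_0\in L^{2^*_\alpha}(B_R)$ vanishes; only the $g$-part then needs the $\eps$-$C_\eps$ bound, which is where it is true. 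As written, though, the claimed inequality for $f$ is the gap.
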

	\begin{proof}
		Since $f(t)=g(t)+t^{2_\al^*-1}$ for $t\geq 0$, by the standard Brezis--Lieb lemma, it suffices to prove
		$$
		\int_{\R^N}g(u_n)u_ndx=\int_{\R^N}g(v_n^1)v_n^1dx+\int_{\R^N}g(u_0)u_0dx+o_n(1),
		$$
		where $o_n(1)\rg0$ as $n\rg\iy$. Fixed
		$\epsilon>0$, there exists $C_\epsilon>0$ such that
		\begin{equation}\label{eqn:sub0}
			|g(t)|\leq \epsilon t^{2_\alpha^*-1}+ C_\epsilon t,\quad t\geq 0.
		\end{equation}
		Then there exists $R=R(\epsilon)>0$ large enough such that
		\begin{equation}\label{eqn:sub1}
			\aligned
			\left|\int_{\R^N}g(u_0)v^1_ndx\right|&\leq\int_{B_R}|g(u_0)v^1_n|dx+\int_{B_R^c}|g(u_0)v^1_n|dx\\
			&\leq\int_{B_R}(\epsilon|u_0|^{2_\alpha^*-1}+C_\epsilon|u_0|)|v^1_n|dx+
			\e(\|v_n^1\|_2+\|v^1_n\|_{2_\alpha^*})
			\leq C\epsilon+C_\eps o_n(1).
			\endaligned
		\end{equation}
		and
		\begin{equation}\label{eqn:sub2}
			\aligned
			\left|\int_{\R^N}g(v^1_n)u_0dx\right|&\leq\int_{B_R}|g(v^1_n)u_0|dx+\int_{B_R^c}|g(v^1_n)u_0|dx\\
			&\leq\int_{B_R}(\epsilon|v^1_n|^{2_\alpha^*-1}+C_\epsilon|v^1_n|)|u_0|dx+
			\int_{B_R^c}(\epsilon|v^1_n|^{2_\alpha^*-1}+C_\epsilon|v^1_n|)|u_0|dx\\
			&\leq C\epsilon+C_\eps o_n(1).
			\endaligned
		\end{equation}
		It follows from (\ref{eqn:sub1}), (\ref{eqn:sub2}) and Lemma \ref{Lem:global-0} that
		\begin{equation*}
			%\label{eqn:sub3}
			\aligned
			&\left|\int_{\R^N}(g(u_n)u_n-g(u_0)u_0-g(v^1_n)v^1_n)dx\right|\\
			&\leq \int_{\R^N}|(g(u_n)-g(u_0)-g(v^1_n))u_n| dx+\int_{\R^N}|g(v^1_n)u_0|dx+\int_{\R^N}|g(u_0)v^1_n|dx\\
			&\leq o_n(1)\|u_n\|+C\epsilon+C_\eps o_n(1).
			\endaligned
		\end{equation*}
		Letting $n\to\infty$ and $\eps\to 0^+$ completes the proof of the first assertion. The second assertion follows from the standard Brezis--Lieb lemma and
			\begin{equation*}
			%\label{eqn:3.04}
			\int_{\R^N}G(u_n)dx=\int_{\R^N}G(v_n^1)dx+\int_{\R^N}G(u_0)dx+o_n(1),
		\end{equation*}
		whose proof is left to the reader.
	\end{proof}

\subsection{Profile decomposition}
In the following, we give the profile decomposition
of $\{u_n\}_{n\in\N}$, which plays a crucial role in getting
the compactness. Since $c_\lambda>0$,  for some $\bar{B}>0$ we have
$$
\int_{\R^N}|(-\triangle)^{\frac{\alpha}{2}}u_n|^2dx\rightarrow \bar{B}^2,\qquad \text{as $n\rightarrow\infty$}.
$$
Now, for any $u\in \H$, let
$$
J_\lambda(u):=\frac{a+b\bar{B}^2}{2}\int_{\R^N}|(-\triangle)^{\frac{\alpha}{2}}
u|^2dx+\frac{1}{2}\int_{\R^N}V(x)|u|^2dx-\lambda\int_{\R^N}F(u)dx
$$
and
$$
J_\lambda^\infty(u):=\frac{a+b\bar{B}^2}{2}\int_{\R^N}|(-\triangle)^{\frac{\alpha}{2}}
u|^2dx+\frac{1}{2}\int_{\R^N}V_\infty|u|^2dx-\lambda\int_{\R^N}F(u)dx,
$$
which are respectively the corresponding functional of the following problems
$$
(a+b\bar{B}^2)(-\triangle)^\alpha {u}+V(x)u =f(u),\qquad
(a+b\bar{B}^2)(-\triangle)^\alpha {u}+V_\infty u =f(u),\,\,u\in \H.
$$
Here we point out that in contrast with the original problem (K),
the problems above are both {\it non Kirchhoff}.
Now we take advantage of this to get the profile decomposition of $\{u_n\}_{n\in\N}$.

\begin{lemma}[Profile decomposition]
	\label{Lem:global}
	Let $\{u_n\}_{n\in\N}\subset \H$ be the sequence mentioned above and
	assume that conditions {\rm (V$_1$)-(V$_3$)}, {\rm (f$_1$)-(f$_3$)} hold and $N<4\alpha$.
	Then $J'_\lambda(u_0)=0$,
	%\begin{equation}\label{eqn:4g.0}
	%J_\lambda(u):=\frac{a+b\bar{B}^2}{2}\int_{\R^N}|(-\triangle)^{\frac{\alpha}{2}}
	%u|^2dx+\frac{1}{2}\int_{\R^N}V(x)|u|^2dx-\lambda\int_{\R^N}F(u)dx
	%\end{equation}
	and there exist a number $k\in\N\cup\{0\}$, nontrivial critical points
	$w^1,\ldots,w^k$ of $J^\infty_{\lambda}$
	such that
	\begin{itemize}
		\item[\rm (i)] $|y_n^j|\rightarrow+\infty$,
		$|y_n^j-y_n^i|\rightarrow+\infty\quad$ if $i\neq j,\,1\leq i,j\leq k,\,n\rightarrow+\infty$,
		\item[\rm (ii)]   $c_\lambda+\frac{b\bar{B}^4}{4}=
		J_\lambda(u_0)+\sum\limits_{j=1}^{k}J^\infty_\lambda(w^j)$,
		\item[\rm (iii)]   $\|u_n-u_0-\sum\limits_{j=1}^{k}w^{j}(\cdot-y_n^j)\|\rightarrow0$,
		\item[\rm (iv)] $\bar{B}^2=\|(-\triangle)^{\frac{\alpha}{2}}
		u_0\|_2^2+\sum\limits_{j=1}^{k}\|(-\triangle)^{\frac{\alpha}{2}}w^j\|_2^2$.
	\end{itemize}
	Moreover, we agree that in the case $k=0$ the above holds without $w^j$.
In addition, if {\rm(V$_4$)} holds, then $k=0$ and $u_0\in H_{{\rm rad}}^\al(\RN)$.
\end{lemma}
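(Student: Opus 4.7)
The strategy is to reduce the Kirchhoff problem to a \emph{non-Kirchhoff} one by freezing the Kirchhoff coefficient at its limit value $a+b\bar B^2$, and then to run a standard global compactness argument in the spirit of Struwe, using the strict energy bound $c_\lambda<c_\lambda^*$ from Lemma~\ref{Lem:level} to forbid critical concentration bubbles. First I transfer the $(PS)_{c_\lambda}$-information for $I_\lambda$ into $(PS)$-information for $J_\lambda$. The identity
\[
J_\lambda(u_n)-I_\lambda(u_n)=\tfrac{b\bar B^2}{2}\|(-\triangle)^{\alpha/2}u_n\|_2^2-\tfrac{b}{4}\|(-\triangle)^{\alpha/2}u_n\|_2^4
\]
together with $\|(-\triangle)^{\alpha/2}u_n\|_2^2\to\bar B^2$ yields $J_\lambda(u_n)\to c_\lambda+b\bar B^4/4$; a similar manipulation shows that $J_\lambda'(u_n)-I_\lambda'(u_n)$ carries the factor $b(\|(-\triangle)^{\alpha/2}u_n\|_2^2-\bar B^2)\to 0$, hence $J_\lambda'(u_n)\to 0$ in $\H'$. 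Passing to the weak limit in $\langle J_\lambda'(u_n),\varphi\rangle=o_n(1)$ for $\varphi\in C_0^\infty(\R^N)$, and exploiting $u_n\to u_0$ in $L^s_{\mathrm{loc}}$ for $s\in[2,2_\alpha^*)$ to handle $f(u_n)\varphi$, I obtain $J_\lambda'(u_0)=0$.

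Setting $v_n^1:=u_n-u_0\rightharpoonup 0$ in $\H$, I combine the standard Brezis--Lieb lemma (for $\|\cdot\|_{D^{\alpha,2}}^2$ and $\|\cdot\|_2^2$) with the splitting Lemmas \ref{Lem:global-0}--\ref{Lem:global-1} and the pointwise-potential decomposition $\int V u_n^2\,dx=\int V u_0^2\,dx+V_\infty\|v_n^1\|_2^2+\int(V-V_\infty)(v_n^1)^2\,dx+o_n(1)$. The last integral is $o_n(1)$ by (V$_2$) and local compactness: on any ball $B_R$ one has $v_n^1\to 0$ in $L^2$, whereas on $B_R^c$ the factor $V_\infty-V$ is uniformly small. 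This yields
\[
J_\lambda(u_n)=J_\lambda(u_0)+J_\lambda^\infty(v_n^1)+o_n(1),\qquad J_\lambda^\infty{}'(v_n^1)\to 0 \text{ in }\H',
\]
so $\{v_n^1\}_{n\in\N}$ is a Palais--Smale sequence for $J_\lambda^\infty$ at level $c_\lambda+b\bar B^4/4-J_\lambda(u_0)$.

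The third and main step is the iterative extraction. If $v_n^1\to 0$ in $\H$ I stop with $k=0$. Otherwise Lemma~\ref{Lem:P.lions1} gives a dichotomy: either there exist $r,\delta>0$ and $y_n^1\in\R^N$ with $\int_{B_r(y_n^1)}|v_n^1|^2\,dx\geq\delta$, or $v_n^1\to 0$ in $L^s(\R^N)$ for every $s\in(2,2_\alpha^*)$. In the first alternative, $|y_n^1|\to\infty$ (forced by $v_n^1\rightharpoonup 0$ in $\H$), and, along a subsequence, $v_n^1(\cdot+y_n^1)\rightharpoonup w^1\neq 0$ with $J_\lambda^\infty{}'(w^1)=0$ by translation invariance; I then pass to $v_n^2:=v_n^1-w^1(\cdot-y_n^1)$ and iterate. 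The second alternative — the critical-bubble scenario — is the main obstacle: testing $J_\lambda^\infty{}'(v_n^1)$ against $v_n^1$, invoking the Sobolev inequality and the subcritical control $|g(t)t|\leq\varepsilon(|t|^2+|t|^{2_\alpha^*})+C_\varepsilon|t|^q$ with $q\in(2,2_\alpha^*)$, one shows that either $v_n^1\to 0$ in $\H$ or $\|(-\triangle)^{\alpha/2}v_n^1\|_2^2$ is bounded below by a positive constant forcing $J_\lambda^\infty(v_n^1)\geq c_\lambda^*+o_n(1)$ after careful bookkeeping against the definition of $c_\lambda^*$; this is incompatible with Lemma~\ref{Lem:level}. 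Since each extracted profile $w^j$ has energy uniformly bounded below by a positive ground state threshold of $J_\lambda^\infty$ (via the Poho\v zaev identity of Lemma~\ref{Lem:pohozaev}), the process terminates after finitely many steps, yielding (i)--(iv).

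Finally, under (V$_4$), Corollary~\ref{bps} provides $\|u_n-|u_n|^*\|_{2_\alpha^*}\to 0$. If some $w^j(\cdot-y_n^j)$ with $|y_n^j|\to\infty$ were present, it would account for a nontrivial amount of $L^{2_\alpha^*}$-mass located far from the origin, which is impossible for the Schwarz symmetrization $|u_n|^*$ (whose mass is concentrated around the origin by radial monotonicity). Hence $k=0$, and $u_0$, being the a.e.\ and weak limit of sequences asymptotically equal to $|u_n|^*$, inherits radial symmetry and lies in $H^\alpha_{\mathrm{rad}}(\R^N)$.
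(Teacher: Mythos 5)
Your overall plan is essentially the paper's: freeze the Kirchhoff coefficient at $a+b\bar B^2$ to pass from $I_\lambda$ to the non-Kirchhoff functionals $J_\lambda,J_\lambda^\infty$, use the splitting lemmas and the Brezis--Lieb lemma, run a Lions-type dichotomy on $v_n^1$, and terminate the bubble extraction via a uniform lower bound on nontrivial critical points of $J_\lambda^\infty$. The transfer of Palais--Smale information, the treatment of $\int(V-V_\infty)(v_n^1)^2\,dx$, the translation-invariance argument for $w^1$, and the $(V_4)$ paragraph all agree with the paper.

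The gap is in the vanishing case. You claim that in this case one derives $J_\lambda^\infty(v_n^1)\geq c_\lambda^*+o_n(1)$ and that this is ``incompatible with Lemma~\ref{Lem:level}.'' By itself it is not: Lemma~\ref{Lem:level} only says $c_\lambda<c_\lambda^*$, while the energy identity reads $c_\lambda+\frac{b\bar B^4}{4}=J_\lambda(u_0)+J_\lambda^\infty(v_n^1)+o_n(1)$, so to conclude $c_\lambda\geq c_\lambda^*$ you would additionally need $J_\lambda(u_0)\geq\frac{b\bar B^4}{4}$, which is false in general. The missing ingredient --- and it is essential --- is the Poho\v zaev-type lower bound $J_\lambda(u_0)\geq\frac{b\bar B^2}{4}\int_{\R^N}|(-\triangle)^{\alpha/2}u_0|^2\,dx$, obtained precisely from (V$_1$) (the smallness of $\|W\|_{N/(2\alpha)}$) and $N<4\alpha$; you never state or use it for $u_0$. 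Combined with the orthogonality $\bar B^2=\|(-\triangle)^{\alpha/2}u_0\|_2^2+\|(-\triangle)^{\alpha/2}v_n^1\|_2^2+o_n(1)$, this converts the $\frac{b\bar B^2}{2}$ coefficient in front of $\|(-\triangle)^{\alpha/2}v_n^1\|_2^2$ into $\frac{b\bar B^2}{4}$, which is exactly what is needed to match the coefficient $\frac{bS_\alpha^2}{4}$ appearing in $K(t)$ and deduce $c_\lambda\geq K(T)=c_\lambda^*$. Without the Poho\v zaev estimate for $u_0$ and the orthogonality identity, the ``careful bookkeeping'' you allude to has no way to close: the $\frac14$-coefficient simply does not appear. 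A secondary (minor) remark: your termination argument quotes a positive energy threshold for nontrivial critical points of $J_\lambda^\infty$, which itself requires a Poho\v zaev argument (the paper instead uses a norm lower bound $\|w\|\geq\rho$ together with the Hilbert-space orthogonality in item (iv), which is more elementary).
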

\begin{proof}
	Observe that, from $I_\lambda(u_n)=c_\lambda+o_n(1)$ and
	$I'_\lambda(u_n)\to 0$ in $\H'$, we obtain
	$$
	J_\lambda(u_n)=c_\lambda+\frac{b\bar{B}^4}{4}+o_n(1), \,\,\,\quad
	J'_\lambda(u_n)\rightarrow 0\quad \text{in $\H'$}.
	$$
	Then, it is standard to get $J'_\lambda(u_0)\varphi=0$ for all $\varphi\in \H$. From Lemma \ref{Lem:global-1}, we get
	\begin{align*}
		%\label{eqn:3.04}
		\int_{\R^N}F(v_n^1)dx&=\int_{\R^N}F(u_n)dx-\int_{\R^N}F(u_0)dx+o_n(1), \\
%\label{eqn:4.0-0}
\int_{\R^N}f(v_n^1)v_n^1dx&=\int_{\R^N}f(u_n)u_ndx-\int_{\R^N}f(u_0)u_0dx+o_n(1).
	\end{align*}
	It follows  that
	\begin{align}\label{eqn:4.0}
		J_\lambda(u_n)&=J_\lambda(v^1_n)+J_\lambda(u_0)+o_n(1), \\
	\label{eqn:4.1}
		J_\lambda '(v^1_n)v_n^1 &=J'_\lambda(u_n)u_n-J'_\lambda(u_0)u_0+o_n(1)=o_n(1).
	\end{align}
	On the other hand, by a slight variant of \cite[Proposition 4.1]{Chang13}, $u_0$ satisfies the Poh\v ozaev identity
	\begin{align*}
		%\label{eqn:4.below-}
		\frac{N-2\alpha}{2}(a+b\bar{B}^2)\int_{\R^N}|(-\triangle)^{\frac{\alpha}{2}}
		u_0|^2dx&+\frac{1}{2}\int_{\R^N}\nabla V(x)\cdot x \, u_0^2dx\\
		&+\frac{N}{2}\int_{\R^N}V(x) u_0^2dx-N\lambda\int_{\R^N}F(u_0)dx=0.
	\end{align*}
	Then by (V$_1$) and $N<4\alpha$, we have
	$$
	\aligned
	NJ_\lambda(u_0)&=
	\alpha(a+b\bar{B}^2)\int_{\R^N}|(-\triangle)^\frac{\alpha}{2}u_0|^2dx
	-\frac{1}{2}\int_{\R^N}\nabla V(x)\cdot x \,u_0^2 dx\\
	&\geq\alpha(a+b\bar{B}^2)\int_{\R^N}|(-\triangle)^\frac{\alpha}{2}u_0|^2dx-\frac{1}{2}\|W\|_{\frac{N}{2\alpha}}\|u_0\|^2_{2_\alpha^*}\\
	&\geq\alpha(a+b\bar{B}^2)\int_{\R^N}|(-\triangle)^\frac{\alpha}{2}u_0|^2dx-a\alpha\int_{\R^N}|(-\triangle)^\frac{\alpha}{2}u_0|^2dx\\
	&=\alpha b\bar{B}^2\int_{\R^N}|(-\triangle)^\frac{\alpha}{2}u_0|^2dx>0,
	\endaligned
	$$
	which implies that
	\begin{equation}\label{eqn:4.2}
		J_\lambda(u_0)\geq\frac{b\bar{B}^2}{4}\int_{\R^N}|(-\triangle)^\frac{\alpha}{2}u_0|^2dx.
	\end{equation}
	We claim that one of the following conclusions holds for $v_n^1$:
	\begin{itemize}
		\item[\rm (v1)] $v_n^1\rightarrow0$ in $\H$, or
		\item[\rm (v2)]  there exist $r'>0$, $\sigma>0$ and a sequence $\{y^1_n\}_{n\in\N}\subset\R^N$ such that
		\begin{equation}\label{eqn:4.04}
			\liminf\limits_{n\rightarrow\infty}\int_{B_{r'}(y_n^1)}|v_n^1|^2dx\geq \sigma>0.
		\end{equation}
	\end{itemize}
	Indeed, suppose that \rm (v2) does not occur. Then for any $r>0$, we have
	\begin{equation*}
		%\label{eqn:4.3}
		\lim\limits_{n\rightarrow\infty}\sup\limits_{y\in\R^N}\int_{B_r(y)}|v_n^1|^2dx=0.
	\end{equation*}
	Therefore, it follows from Lemma \ref{Lem:P.lions1} that $v_n^1\rightarrow0$ in $L^s(\R^N)$ for $s\in(2,2_\alpha^*)$.
	It follows from (\ref{eqn:sub0}) that for any $\epsilon>0$, there exists $C_\epsilon>0$ such that
			$$
			\int_{\R^N}|g(v_n^1)v_n^1|dx\leq\epsilon \Big(\int_{\R^N}|v_n^1|^2+|v_n^1|^{2_\alpha^*}\Big)dx
			+C_\epsilon\int_{\R^N}|v_n^1|^{q}dx.
			$$
			So from $v_n^1\rightarrow0$ in $L^q(\R^N)$  and the arbitrariness of $\epsilon$,
			we can easily obtain that
			$$
			\int_{\R^N}f(v_n^1)v_n^1dx= \int_{\R^N}((v_n^1)^+)^{2^*_\alpha}dx+o_n(1).
			$$
	Furthermore, from $J'_\lambda(v_n^1)v_n^1=o_n(1)$ in \eqref{eqn:4.1}, we have
	\begin{equation}\label{eqn:4.3-}
		\|v_n^1\|^2+b\bar{B}^2\int_{\R^N}|(-\triangle)^{\frac{\alpha}{2}}v_n^1|^2dx
		=\lambda\|(v^1_n)^+\|_{2_\alpha^*}^{2_\alpha^*}+o_n(1).
	\end{equation}
	%Moreover, it follows from (\ref{eqn:3.0--}) and (\ref{eqn:4.2}) that
	%$$
	%\aligned
	%&c_\lambda+\frac{b\bar{B}^4}{4}\geq J_\lambda(v_n^1)=\frac{a+b\bar{B}^2}{2}\int_{\R^N}|(-\triangle)^{\frac{\alpha}{2}}
	%v_n^1|^2dx\\
	%&+\frac{1}{2}\int_{\R^N}V(x)|v_n^1|^2dx-\lambda\int_{\R^N}|v_n^1|^{2_\alpha^*}dx+o(1).
	%\endaligned
	%$$
	In view of conditions (V$_2$)-(V$_3$), we can
	check that $V_\infty>0$. And so we can also get
			$$
			\int_{\R^N}V(x) |v_n^1|^2dx=\int_{\R^N}V^+(x) |v_n^1|^2dx+o_n(1),
			$$
			which, together with the definition of $S_\alpha$ and \eqref{eqn:4.3-}, implies that
			\begin{equation}\label{eqn:3.010}
				 aS_\alpha\left(\int_{\R^N}|v_n^1|^{2_\alpha^*}dx\right)^{\frac{2}{2_\alpha^*}}+bS_\alpha^2\left(\int_{\R^N}|v_n^1|^{2_\alpha^*}dx\right)^{\frac{4}{2_\alpha^*}}
				\leq \lambda\int_{\R^N}|v_n^1|^{2_\alpha^*}dx+o_n(1).
			\end{equation}
	Let $\ell\geq0$ be such that $\int_{\R^N}|v_n^1|^{2_\alpha^*}dx\rightarrow \ell^{N}$.
	If $\ell>0$, then
	it follows from (\ref{eqn:3.010}) that
	$$
	K'(\ell)=\frac{(N-2\alpha)\ell^{-1}}{2}(aS_\alpha \ell^{N-2\alpha}+bS_\alpha^2 \ell^{2N-4\alpha}-\lambda \ell^{N})\leq0,
	$$
	where $K$ has been defined in Lemma \ref{Lem:level}. This also implies that
	$\ell\geq T$ ($T$ is the unique maximum point of $K$).  On the other hand, by \eqref{eqn:4.0} and \eqref{eqn:4.2}, we have
	$$
	\aligned
	c_\lambda+\frac{b\bar{B}^4}{4}&=\int_{\R^N}\left(\frac{a+b\bar{B}^2}{2}|(-\triangle)^{\frac{\alpha}{2}}v_n^1|^2+\frac{1}{2}V(x) |v_n^1|^2-\frac{\lambda}{2_\alpha^*}((v_n^1)^+)^{2_\alpha^*}\right)dx+J_\lambda(u_0)+o_n(1)\\
	&\geq\int_{\R^N}\left(\Big(\frac{a}{2}+\frac{b\bar{B}^2}{4}\Big)|(-\triangle)^{\frac{\alpha}{2}}v_n^1|^2
	+\frac{1}{2}V(x)|v_n^1|^2
	-\frac{\lambda}{2_\alpha^*}((v_n^1)^+)^{2_\alpha^*}\right)dx+\frac{b\bar{B}^4}{4}+o_n(1),
	\endaligned
	$$
	which, together with (\ref{eqn:4.3-}) and the definition of $S_\alpha$,
	implies that
	$$
	\aligned
	c_\lambda&\ge\left(\frac{1}{2}-\frac{1}{2_\alpha^*}\right)a\int_{\R^N}|(-\triangle)^{\frac{\alpha}{2}}v_n^1|^2dx
	+\left(\frac{1}{4}-\frac{1}{2_\alpha^*}\right)b\left(\int_{\R^N}|(-\triangle)^{\frac{\alpha}{2}}v_n^1|^2dx\right)^2
	+o_n(1)\\
	&\geq\left(\frac{1}{2}-\frac{1}{2_\alpha^*}\right)aS_\alpha\left(\int_{\R^N}|v_n^1|^{2_\alpha^*}dx\right)^{\frac{2}{2_\alpha^*}}+
	\left(\frac{1}{4}-\frac{1}{2_\alpha^*}\right)bS^2_\alpha\left(\int_{\R^N}|v_n^1|^{2_\alpha^*}dx\right)^{\frac{4}{2_\alpha^*}} +o_n(1).
	\endaligned
	$$
	Thus, combining $\int_{\R^N}|v_n^1|^{2_\alpha^*}dx\rightarrow \ell^{N}$ and $\ell\geq T$, $K'(T)= 0$, we have
	$$
	\aligned
	c_\lambda&\ge\left(\frac{1}{2}-\frac{1}{2_\alpha^*}\right)aS_\alpha \ell^{N-2\alpha}
	+\left(\frac{1}{4}-\frac{1}{2_\alpha^*}\right)bS^2_\alpha \ell^{2N-4\alpha}\\
	&\geq\left(\frac{1}{2}-\frac{1}{2_\alpha^*}\right)aS_\alpha T^{N-2\alpha}
	+\left(\frac{1}{4}-\frac{1}{2_\alpha^*}\right)bS^2_\alpha T^{2N-4\alpha}\\
	&=\frac{1}{2}aS_\alpha T^{N-2\alpha}+\frac{1}{4}bS^2_\alpha T^{2N-4\alpha}-\frac{\lambda}{2_\alpha^*}T^{N}
	=c_\lambda^*,
	\endaligned
	$$
	contradicting $c_\lambda<c_\lambda^*$. Hence, $\ell=0$. It follows from (\ref{eqn:4.3-}) that $\|v_n^1\|\rightarrow0$, that is, $u_n\rightarrow u_0$ in $\H$. Then Lemma \ref{Lem:global} hold with $k=0$ if \rm (v2) does not occur.
	 In particular, if we assume {\rm(V$_4$)} holds, then by Corollary \ref{bps}, $\|u_n-|u_n|^*\|_{2^*_\alpha}\to 0$. Obviously, $\{|u_n|^*\}_{n\in\N}\subset H_{{\rm rad}}^\al(\RN)$ is bounded and $\|u_n-|u_n|^*\|_q\to 0$ for $q\in(2,2_\al^*)$. Since $\{|u_n|^*\}_{n\in\N}$ has a strongly convergent subsequence in $L^q(\RN)$ for $q\in(2,2_\al^*)$, without loss of generality, we assume that $u_n\rg u_0$ in $L^q(\RN)$ for $q\in(2,2_\al^*)$ and $u_0=u_0^*$. As a consequence, \rm (v2) does not hold and as above, $u_n\rightarrow u_0$ in $\H$.
	
	 In the following,
	otherwise, suppose that \rm (v2) holds, that is \eqref{eqn:4.04} holds. Consider
	$v_n^1(\cdot+y_n^1)$. The boundedness of $\{v_n^1\}_{n\in\N}$ and \eqref{eqn:4.04} imply that $v_n^1(\cdot+y_n^1)\rightharpoonup w^1\neq0$
	in $\H$.  Thus, it follows from $v_n^1\rightharpoonup 0$ in $\H$ that $\{y_n^1\}_{n\in\N}$ is unbounded and, up to a subsequence, $|y^1_n|\to+\infty$.
	Let us prove that $(J_\lambda^{\infty})'(w^1)=0$. It suffices to show that $(J_\lambda^{\infty})'(v_n^1(\cdot+y_n^1))\varphi\rightarrow0$ for any $\varphi\in C_0^{\infty}(\R^N)$.
	%Moreover, we have
	%\begin{equation}\label{eqn:4.3-9}
	%\int_{\R^N}(|u_n|^{2_\alpha^*-2}u_n-|u_0|^{2_\alpha^*-2}u_0)\varphi dx
	%=\int_{\R^N}|v_n^1|^{2_\alpha^*-2}v_n^1\varphi dx+o_n(1)
	%\end{equation}
	%for any $\varphi\in C_0^{\infty}(\R^N)$. Thus,
	Combining Lemma~\ref{Lem:global-0}  and Lemma \ref{Lem:global-2}, we obtain
	\begin{equation*}
		%\label{eqn:4.4-1}
		|J'_{\lambda}(u_n)\varphi-J'_{\lambda}(u_0)\varphi-J'_{\lambda}(v_n^1)\varphi|\leq o_n(1)\|\varphi\|,\quad\,\,\, \forall \varphi\in C_0^{\infty}(\R^N),
	\end{equation*}
	which implies that $|J'_{\lambda}(v_n^1)\varphi|\leq o_n(1)\|\varphi\|$, for all $\varphi\in C_0^{\infty}(\R^N)$, as $n\to\infty$. Notice that
	\begin{equation*}
		%\label{eqn:4.4}
		\aligned
		 J_{\lambda}'(v^1_n)\varphi(\cdot-y_n^1)=&\frac{C(n,\alpha)}{2}(a+b\bar{B}^2)\int_{\R^{2N}}\frac{(v_n^1(x)-v_n^1(y))(\varphi(x-y_n^1)-\varphi(y-y_n^1))}{|x-y|^{N+2\alpha}}dxdy\\
		&+\int_{\R^N}V(x)v^1_n(x)\varphi(x-y_n^1)dx-\lambda\int_{\R^N}g(v_n^1(x))\varphi(x-y_n^1)dx\\
		&-\lambda\int_{\R^N}((v_n^1(x))^+)^{2_\alpha^*-1}\varphi(x-y_n^1)dx
		=o_n(1)\|\varphi(\cdot-y_n^1)\|=o_n(1)\|\varphi\|.
		\endaligned
	\end{equation*}
	Thus, as $n\to\infty$, it follows that
	\begin{equation}\label{eqn:4.5}
		\aligned
		&\frac{C(n,\alpha)}{2}(a+b\bar{B}^2)\int_{\R^{2N}}\frac{(v_n^1(x+y_n^1)-v_n(y+y_n^1))(\varphi(x)-\varphi(y))}{|x-y|^{N+2\alpha}}dxdy+
		\\&\int_{\R^N}V(x+y_n^1)v^1_n(x+y_n^1)\varphi(x)dx
		-\lambda\int_{\R^N}g(v_n^1(x+y_n^1))\varphi(x)dx\\
		&-\int_{\R^N}((v_n^1(x+y_n^1))^+)^{2_\alpha^*-1}\varphi(x)dx=o_n(1)\|\varphi\|.
		\endaligned
	\end{equation}
	Since $|y_n^1|\rightarrow\infty$ and $\varphi\in C_0^\infty(\R^N)$,
	we obtain
	\begin{equation}\label{eqn:4.6}
		\int_{\R^N}(V(x+y_n^1)-V_\infty)v^1_n(x+y_n^1)\varphi(x)dx\rightarrow0.
	\end{equation}
	Thus, combining (\ref{eqn:4.5}) and (\ref{eqn:4.6}), we have for any $\varphi\in C_0^\infty(\R^N)$,
	$$
	\aligned
	 (J^\infty_{\lambda})'(v^1_n(\cdot+y_n^1))\varphi=&\frac{C(n,\alpha)}{2}(a+b\bar{B}^2)\int_{\R^{2N}}\frac{(v_n^1(x+y_n^1)-v_n(y+y_n^1))(\varphi(x)-\varphi(y))}{|x-y|^{N+2\alpha}}dxdy
	\\
	&+\int_{\R^N}V_\infty v^1_n(x+y_n^1)\varphi(x)dx
	-\lambda\int_{\R^N}g(v_n^1(x+y_n^1))\varphi(x)dx\\
	&-\lambda\int_{\R^N}((v_n^1(x+y_n^1))^+)^{2_\alpha^*-1}\varphi(x)dx=o_n(1).
	\endaligned
	$$
	Then, $(J_\lambda^{\infty})'(w^1)=0$. Finally, let us set
	\begin{equation}\label{eqn:4.11'}
		v_n^2(x)=v_n^1(x)-w^1(x-y_n^1),
	\end{equation}
	then $v_n^2\rightharpoonup 0$ in $\H$. 
	Since $V(x)\to V_\infty$ as $|x|\to\infty$ and $v_n^1\rightarrow0$ strongly in $L^2_{{\rm loc}}(\R^N)$, we have
	\begin{equation*}
	\int_{\R^N}(V(x)-V_\infty)(v_n^1)^2dx=o_n(1).
	\end{equation*}
	It follows that
	\begin{align}\label{eqn:4.12}
		\int_{\R^N}V(x)|v_n^2|^2dx& =\int_{\R^N}V(x)|v_n^1|^2dx+\int_{\R^N}V(x+y_n^1)|w^1(x)|^2dx \\
		&-2\int_{\R^N}V(x+y_n^1)v_n^1(x+y_n^1)w^1(x)dx \notag \\
		&=\int_{\R^N}V_\infty|u_n|^2dx-\int_{\R^N}V_\infty|u_0|^2dx -\int_{\R^N}V_\infty |w^1|^2dx+o_n(1)  \notag \\ 
		&=\int_{\R^N}V(x)|u_n|^2dx-\int_{\R^N}V(x)|u_0|^2dx -\int_{\R^N}V_\infty |w^1|^2dx+o_n(1), \notag
	\end{align}
	\begin{equation}\label{eqn:4.13}
		\left\{
		\begin{array}{ll}
			\| v_n^2\|^2=\| u_n\|^2-\|u_0\|^2-\| w^1(\cdot-y_n^1)\|^2+o_n(1), \\
			\|v_n^2\|_{2_\alpha^*}^{2_\alpha^*}=\|u_n\|_{2_\alpha^*}^{2_\alpha^*}-\|u_0\|_{2_\alpha^*}^{2_\alpha^*}-\|w^1\|_{2_\alpha^*}^{2_\alpha^*}+o_n(1),
		\end{array}
		\right.
	\end{equation}
	\begin{equation}\label{eqn:4.14}
		\aligned
		&\int_{\R^N}G(v_n^2)dx=\int_{\R^N}G(u_n)dx-\int_{\R^N}G(u_0)dx-\int_{\R^N}G(w^1)dx.
		\endaligned
	\end{equation}
	Similar to (\ref{eqn:4.3-1}), we also have
	\begin{equation}\label{eqn:4.14-0}
		\aligned
		&\int_{\R^N}g(v_n^2)\varphi dx=\int_{\R^N}g(u_n)\varphi dx-\int_{\R^N}g(u_0)\varphi dx-\int_{\R^N}g(w^1(\cdot-y_n^1))\varphi dx+o_n(1)\|\varphi\|,
		\endaligned
	\end{equation}
	for any $\varphi\in C_0^\infty(\R^N)$.
	Combining (\ref{eqn:4.12}), (\ref{eqn:4.13}), (\ref{eqn:4.14}) and (\ref{eqn:4.14-0}), we deduce that
	\begin{equation*}
		%\label{eqn:4.13-}
		\aligned
		(1)\quad&J_\lambda(v_n^2)=J_\lambda(u_n)-J_\lambda(u_0)-J^{\infty}_\lambda(w^1)+o_n(1),\\
		(2)\quad&J'_\lambda(v_n^2)\varphi=
		J'_\lambda(u_n)\varphi-J'_\lambda(u_0)\varphi-(J^{\infty}_\lambda)'(w^1(\cdot -y_n^1))\varphi+o_n(1)\|\varphi\|=o_n(1)\|\varphi\|,\\
		(3)\quad&J_\lambda^{\infty}(v_n^2)=J_\lambda^{\infty}(v_n^1)-J_\lambda^{\infty}(w^1)+o_n(1)
		\endaligned
	\end{equation*}
	for any $\varphi\in C^\infty_0(\R^N)$. Thus, we get
	$$
	J_\lambda(v_n^2)=c_\lambda+\frac{b\bar{B}^4}{4}-J_\lambda(u_0)-J^{\infty}_\lambda(w^1)+o_n(1)
	< c_\lambda^*+\frac{b\bar{B}^4}{4}.
	$$
	Remark that one of (v1) and (v2) holds for $v_n^2$. If $v_n^2\rightarrow0$ in
	$\H$, then Lemma \ref{Lem:global} holds with $k=1$. Otherwise, $\{v_n^2\}$ is non-vanishing,
	that is, (v2) holds for $v_n^2$. Similarly, we repeat the arguments. By iterating
	this procedure we obtain sequences of points $\{y_n^j\}\subset\R^N$ such that $|y_n^j|\rightarrow+\infty$,
	$|y_n^j-y_n^i|\rightarrow+\infty$ if $i\neq j$ as $n\rightarrow+\infty$ and $v_n^j=v_n^{j-1}-w^{j-1}(x-y_n^{j-1})$ (like (\ref{eqn:4.11'})) with $j\geq2$
	such that
	$
	v_n^j\rightharpoonup 0\,\,\mbox{in}\,\, \H,\,\,(J^\infty_\lambda)'(w^j)=0
	$.
	Using the properties of the weak convergence, we have
	\begin{equation}\label{eqn:4.15}
		\aligned
		(a)\quad &\|u_n\|^2-\|u_0\|^2-\sum_{j=1}^{k}\|w^j(\cdot-y_n^j)\|^2=\|u_n-u_0-\sum_{j=1}^{k}w^{j}(\cdot-y_n^{j})\|^2+o(1),\\
		(b)\quad& J_\lambda(u_n)\rightarrow J_\lambda(u_0)+\sum_{j=1}^{k}J_\lambda^\infty(w^{j})+J^\infty_\lambda(v_n^{k+1}).
		\endaligned
	\end{equation}
	Note that there is $\rho>0$ such that $\|w\|\geq \rho$ for every nontrivial
	critical point $w$ of $J_\lambda^\infty$ and $\{u_n\}_{n\in\N}$ is bounded in $\H$. By (\ref{eqn:4.15})(a),
	the iteration stops at some $k$. That is, $v_n^{k+1}\rightarrow0$ in $\H$.
	The proof is complete.
\end{proof}

%%%%%%%%%%%%%%%%%%%%%%%%%%%%%%%%%%%%%%%%%%%%%%%%%%%%%%%%%%%%%%%%%%%%%%%%%%%%%%%%%%%%%%%%%%%%%%%%%
%%%%%%%%%%%%%%%%%%%%%%%%%%%%%%%%%%%%%%%%%%%%%%%%%%%%%%%%%%%%%%%%%%%%%%%%%%%%%%%%%%%%%%%%%%%%%%%%%
%%%%%%%%%%%%%%%%%%%%%%%%%%%%%%%%%%%%%%%%%%%%%%%%%%%%%%%%%%%%%%%%%%%%%%%%%%%%%%%%%%%%%%%%%%%%%%%%%
%%%%%%%%%%%%%%%%%%%%%%%%%%%%%%%%%%%%%%%%%%%%%%%%%%%%%%%%%%%%%%%%%%%%%%%%%%%%%%%%%%%%%%%%%%%%%%%%%
%%%%%%%%%%%%%%%%%%%%%%%%%%%%%%%%%%%%%%%%%%%%%%%%%%%%%%%%%%%%%%%%%%%%%%%%%%%%%%%%%%%%%%%%%%%%%%%%%
%%%%%%%%%%%%%%%%%%%%%%%%%%%%%%%%%%%%%%%%%%%%%%%%%%%%%%%%%%%%%%%%%%%%%%%%%%%%%%%%%%%%%%%%%%%%%%%%%

\section{Proof of the main results}
\label{sec3}
In order to obtain the existence of ground state solutions of problem (K),
our strategy is that we firstly obtain the existence nontrivial solutions of
the perturbed problem, then as $\lambda$ goes to $1$, we get a nontrivial solution of the original problem.
Finally, thanks to the profile decomposition of the
(PS)-sequence, we obtain the existence
of ground state solutions of problem (K).

\subsection{Nontrivial critical points of $I_\lambda$}

\begin{lemma}\label{Lem:compact}
	Assume that {\rm(V$_1$)-(V$_3$)} and {\rm(f$_1$)-(f$_3$)} hold. For almost every $\lambda\in[1/2,1]$,
	there exists $u_\lambda\in \H\setminus\{0\}$ such that $I_\lambda(u_\lambda)= c_\lambda$ and $I_\lambda'(u_\lambda)=0$. In addition, if {\rm(V$_4$)} holds, then $u_\la\in H_{{\rm rad}}^\al(\RN)$.
\end{lemma}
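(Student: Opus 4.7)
The plan is to extract a nontrivial critical point of $I_\lambda$ at the level $c_\lambda$ from the profile decomposition of a bounded Palais--Smale sequence, using the strict energy gap $c_\lambda<c_\lambda^*$ from Lemma~\ref{Lem:level} to rule out escape of mass to infinity.

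\textbf{Step 1 (Palais--Smale sequence and profile decomposition).} For a.e.\ $\lambda\in J$, Corollary~\ref{bps} produces a bounded $(PS)_{c_\lambda}$ sequence $\{u_n\}\subset\mathcal H^+$ for $I_\lambda$ (with the extra symmetry control $\|u_n-|u_n|^*\|_{2_\alpha^*}\to0$ if (V$_4$) holds). Passing to a subsequence, $u_n\rightharpoonup u_0\geq 0$ in $\mathcal H$, $u_n\to u_0$ a.e., and $\bar B^2:=\lim_n\|(-\Delta)^{\alpha/2}u_n\|_2^2$ exists. Applying Lemma~\ref{Lem:global} yields $k\in\mathbb N\cup\{0\}$, nontrivial critical points $w^1,\dots,w^k$ of $J^\infty_\lambda$, and points $y^j_n$ with $|y^j_n|\to\infty$, satisfying (i)--(iv); moreover, under (V$_4$), Lemma~\ref{Lem:global} directly gives $k=0$ and $u_0\in H^\alpha_{\rm rad}(\mathbb R^N)$.

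\textbf{Step 2 (no bubbles in the general case).} The crux is to show $k=0$ without assuming (V$_4$). Arguing by contradiction, suppose $k\geq 1$. The Poho\v zaev identity for each $w^j$ (a critical point of the non-Kirchhoff functional $J^\infty_\lambda$) gives
$J^\infty_\lambda(w^j)=\tfrac{\alpha}{N}(a+b\bar B^2)\|(-\Delta)^{\alpha/2}w^j\|_2^2$,
while the Poho\v zaev identity for $u_0$ combined with (V$_1$) and Sobolev, exactly as in the proof of Lemma~\ref{Lem:global}, yields
$J_\lambda(u_0)\geq\tfrac{\alpha b\bar B^2}{N}\|(-\Delta)^{\alpha/2}u_0\|_2^2$.
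Plugging into identity (ii) and eliminating $\|(-\Delta)^{\alpha/2}u_0\|_2^2$ via (iv), a short algebraic simplification gives
\[
c_\lambda\;\geq\;\frac{\alpha a}{N}\sum_{j=1}^k\|(-\Delta)^{\alpha/2}w^j\|_2^2+\Bigl(\frac{\alpha}{N}-\frac14\Bigr)b\bar B^4,
\]
where $\tfrac{\alpha}{N}-\tfrac14>0$ thanks to $N<4\alpha$. Using the Nehari identity for $w^j$, the lower growth condition (f$_3$) (which provides $f(t)t\geq t^{2_\alpha^*}$), and the Sobolev inequality $S_\alpha\|w^j\|_{2_\alpha^*}^2\leq\|(-\Delta)^{\alpha/2}w^j\|_2^2$, one bounds $\sum_j\|(-\Delta)^{\alpha/2}w^j\|_2^2\geq S_\alpha T^{N-2\alpha}$ (with $T$ from Lemma~\ref{Lem:level}); since $\bar B^2\geq\sum_j\|(-\Delta)^{\alpha/2}w^j\|_2^2$, the same bound holds for $\bar B^2$. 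Combined with the identity $\tfrac{\alpha}{N}-\tfrac14=\tfrac14-\tfrac{1}{2_\alpha^*}$ and the critical relation $\lambda T^{2\alpha}=aS_\alpha+bS_\alpha^2 T^{N-2\alpha}$ characterising $T$, the right-hand side above is bounded below by exactly $c_\lambda^*$, contradicting $c_\lambda<c_\lambda^*$. Hence $k=0$.

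\textbf{Step 3 (strong convergence and conclusion).} With $k=0$, (iii) forces $u_n\to u_0$ in $\mathcal H$, so $\bar B^2=\|(-\Delta)^{\alpha/2}u_0\|_2^2$. At this common value of the Kirchhoff coefficient, $J_\lambda$ agrees with $I_\lambda$ up to a constant and $J'_\lambda(u_0)\varphi=I'_\lambda(u_0)\varphi$ for every $\varphi\in\mathcal H$; therefore $I'_\lambda(u_0)=J'_\lambda(u_0)=0$ and $I_\lambda(u_0)=J_\lambda(u_0)-\tfrac{b}{4}\bar B^4=c_\lambda$ by (ii). Since $c_\lambda\geq\eta>0$ by Lemma~\ref{Lem:MP1}, setting $u_\lambda:=u_0$ gives the desired nontrivial critical point; under (V$_4$), $u_\lambda\in H^\alpha_{\rm rad}(\mathbb R^N)$ by Lemma~\ref{Lem:global}. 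The principal obstacle is the sharp Sobolev-type lower bound on $\sum_j\|(-\Delta)^{\alpha/2}w^j\|_2^2$ in terms of $T$: one cannot solve the fractional algebraic equation for $T$ explicitly, so the argument must rely on the variational characterisation of $T$ as the unique maximiser of the function $K$ from Lemma~\ref{Lem:level}, mirroring the vanishing-alternative step in the proof of Lemma~\ref{Lem:global}.
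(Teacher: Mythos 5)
Your Step~2 has a genuine gap, and the strategy it pursues cannot work in the form stated. The key claimed bound
\[
\sum_{j=1}^k\|(-\triangle)^{\alpha/2}w^j\|_2^2\;\geq\; S_\alpha\,T^{N-2\alpha}
\]
is not justified by the Nehari identity together with $f(t)t\geq t^{2^*_\alpha}$ and Sobolev. In the vanishing step of Lemma~\ref{Lem:global}, the inequality $aS_\alpha\ell^{N-2\alpha}+bS_\alpha^2\ell^{2N-4\alpha}\leq\lambda\ell^N$, which forces $\ell\geq T$, is obtained precisely because $v_n^1\to 0$ in $L^q$ for subcritical $q$, so the subcritical part of $\int f(v_n^1)v_n^1$ is negligible. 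For a \emph{fixed} nontrivial critical point $w^j$ of $J_\lambda^\infty$, the Nehari identity is
\[
(a+b\bar B^2)\|(-\triangle)^{\alpha/2}w^j\|_2^2+V_\infty\|w^j\|_2^2=\lambda\int_{\R^N}f(w^j)w^j\,dx,
\]
and (f$_3$) now gives $\int f(w^j)w^j\geq\int(w^j)^{2^*_\alpha}+D\int(w^j)^q\geq\|w^j\|_{2^*_\alpha}^{2^*_\alpha}$, which is an inequality in the \emph{wrong} direction: the strictly positive subcritical contribution prevents you from turning Nehari into $aS_\alpha\ell_j^{N-2\alpha}+bS_\alpha^2\ell_j^{2N-4\alpha}\leq\lambda\ell_j^N$. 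There is no reason a critical point of the non-Kirchhoff functional $J^\infty_\lambda$ should have gradient norm at least $S_\alpha T^{N-2\alpha}$, a quantity tied to the Kirchhoff maximiser $T$.

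There is also a structural problem with aiming at $k=0$ outright. When $V\equiv V_\infty$, the case $k=1$ with $u_0=0$ is not excluded, and in that case the Palais--Smale sequence concentrates along a translated bubble $w^1$; the ground state is then $w^1$, not the weak limit $u_0$. The paper's proof reflects this: it does not try to force $k=0$. Instead it establishes, for every nontrivial $w^j$, the lower bound
\[
J^\infty_\lambda(w^j)\geq c_\lambda+\frac{b\bar B^2}{4}\|(-\triangle)^{\alpha/2}w^j\|_2^2,
\]
not by Sobolev, but by finding $t_j\in(0,1]$ with $P_\lambda(w^j(\cdot/t_j))=0$, using Lemma~\ref{Lem:shlu1} to produce an admissible path through $w^j(\cdot/t_j)$, and comparing to the mountain pass level $c_\lambda$. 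Feeding this into identity (ii) of Lemma~\ref{Lem:global} together with the strict inequality for $J_\lambda(u_0)$ when $u_0\neq 0$ yields $k\leq 1$, with strictness whenever $V\not\equiv V_\infty$, and the residual case $k=1$, $u_0=0$, $V\equiv V_\infty$ is then handled directly by identifying $w^1$ as the critical point. You should replace the Sobolev-type estimate on $\|(-\triangle)^{\alpha/2}w^j\|_2^2$ with this path/Poho\v zaev comparison, and allow for the $k=1$ alternative instead of deriving a flat contradiction.
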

\begin{proof}
For almost all $\lambda\in[1/2,1]$, there is a bounded sequence
$\{u_n\}_{n\in\N}\subset \H$ such that $I_\lambda(u_n)\rightarrow c_\lambda$, $I_\lambda'(u_n)\rightarrow0$.
From Lemma \ref{Lem:global}, up to a subsequence, there exist $u_0\in \H$
and $\bar{B}>0$ such that
$$
u_n\rightharpoonup u_0\quad\mbox{in}\,\,\H,\quad\int_{\R^N}|(-\triangle)^{\frac{\alpha}{2}} u_n|^2dx\rightarrow \bar{B}^2,\,\mbox{as}\,n\rightarrow\infty
$$
and $J'_\lambda(u_0)=0$.
Furthermore, there exist $k\in\N\cup\{0\}$,
nontrivial critical points $w^1,\ldots,w^{k}$ of $J_\lambda^\infty$
and $k$ sequences of points
$\{y_n^j\}\subset\R^N$, $1\leq j\leq k$, such that
\begin{equation}\label{eqn:4.16-}
	\aligned
	\left\|u_n-u_0-\sum\limits_{j=1}^{k}w^{j}(\cdot-y_n^j)\right\|\rightarrow0,\quad c_\lambda+\frac{b\bar{B}^4}{4}=
	J_\lambda(u_0)+\sum\limits_{j=1}^{k}J^\infty_\lambda(w^j)
	\endaligned
\end{equation}
and
\begin{equation}\label{eqn:4.16}
	\bar{B}^2=\|(-\triangle)^{\frac{\alpha}{2}}
	u_0\|_2^2+\sum\limits_{j=1}^{k}\|(-\triangle)^{\frac{\alpha}{2}}w^j\|_2^2.
\end{equation}
Now we claim that if $u_0\not=0$, then by $N<4\alpha$,
\begin{equation}\label{eqn:4.17}
	J_\lambda(u_0)>\frac{b\bar{B}^2}{4}\int_{\R^N}|(-\triangle)^{\frac{\alpha}{2}} u_0|^2dx.
\end{equation}
Indeed, since $J'_\lambda(u_0)=0$, similar as in \cite{Chang13}, we get
$$
\aligned
\bar{P}_\lambda(u_0):=&\frac{N-2\alpha}{2}(a+b\bar{B}^2)\int_{\R^N}|(-\triangle)^{\frac{\alpha}{2}}
u_0|^2dx
+\frac{N}{2}\int_{\R^N}V(x)u_0^2dx\\
&+\frac{1}{2}\int_{\R^N}(\nabla
V(x),x)u_0^2dx-N\lambda\int_{\R^N}F(u_0)dx=0.
\endaligned
$$
By hypothesis (V$_1$) we have
$$
J_\lambda(u_0)=\frac{\alpha}{N}(a+b\bar{B}^2)\int_{\R^N}|(-\triangle)^{\frac{\alpha}{2}}u_0|^2dx-\frac{1}{2N}\int_{\R^N}\nabla
V(x)\cdot x \,u_0^2dx
>\frac{\alpha}{N}b\bar{B}^2\int_{\R^N}|(-\triangle)^{\frac{\alpha}{2}}u_0|^2dx,
$$
which implies that (\ref{eqn:4.17}) holds.
For each nontrivial critical point $w^j,\,(j=1,...,k)$ of $J^\infty_\lambda$,
$$
\frac{N-2\alpha}{2}(a+b\bar{B}^2)\int_{\R^N}|(-\triangle)^{\frac{\alpha}{2}}
w^j|^2dx+\frac{N}{2}\int_{\R^N}V_\infty|w^j|^2dx-N\lambda\int_{\R^N}F(w^j)dx=0.
$$
Then it follows from (\ref{eqn:4.16}) that
$$
\aligned
&\frac{a(N-2\alpha)}{2}\int_{\R^N}|(-\triangle)^{\frac{\alpha}{2}} w^j|^2dx+
\frac{b(N-2\alpha)}{2}\left(\int_{\R^N}|(-\triangle)^{\frac{\alpha}{2}} w^j|^2dx\right)^2\\
&+\frac{N}{2}\int_{\R^N}V_\infty|w^j|^2dx-N\lambda\int_{\R^N}F(w^j)dx\leq0.
\endaligned
$$
Then there exists $t_j\in(0,1]$ such that
\begin{equation}\label{eqn:4.17-}
	\aligned
	&\frac{at^{N-2\alpha}_j}{2}(N-2\alpha)\int_{\R^N}|(-\triangle)^{\frac{\alpha}{2}} w^j|^2dx
	+\frac{bt^{2N-4\alpha}_j}{2}(N-2\alpha)\left(\int_{\R^N}|(-\triangle)^{\frac{\alpha}{2}} w^j|^2dx\right)^2\\
	&+\frac{Nt_j^N}{2}\int_{\R^N}V_\infty|w^j|^2dx-Nt_j^N\lambda\int_{\R^N}F(w^j)dx=0.
	\endaligned
\end{equation}
That is, $w^j(\cdot/t_j)$ satisfies the identity $P_\lambda(u)=0$ and it follows from Lemma \ref{Lem:shlu1} that there exists $\gamma_\lambda\in C([0,1],\H)$ such that $\gamma_\lambda
(0)=0$, $I^{\infty}_{\lambda}(\gamma_\lambda(1))<0$, $w^j\in \gamma_{\lambda}([0,1])$ and
$$
I_\lambda^\infty(w^j(\cdot/t_j))=\max_{t\in[0,1]}I_\lambda^\infty(\gamma_\lambda(t)).
$$
By hypothesis (V$_2$), we have
$\max_{t\in[0,1]}I_\lambda^\infty(\gamma_\lambda(t))\ge\max_{t\in[0,1]}I_\lambda(\gamma_\lambda(t)),$
which, by the definition of $c_\lambda$, implies that $I_\lambda^\infty(w^j(\frac{\cdot}{t_j}))\ge c_\lambda$.
In particular, if $V(x)\not\equiv V_\infty$, then
\begin{equation}\label{eqn:4.17--}
	I_\lambda^\infty(w^j(\cdot/t_j))>c_\lambda.
\end{equation}
So by (\ref{eqn:4.17-}) we have
\begin{equation}\label{eqn:4.17+}
	\aligned
	J^\infty_\lambda(w^j)&=J^\infty_\lambda(w^j)-\frac{1}{N}P_\lambda(w^j)
	=(a+b\bar{B}^2)\left(\frac{1}{2}-\frac{1}{2_\alpha^*}\right)\int_{\R^N}|(-\triangle)^{\frac{\alpha}{2}} w^j|^2dx\\
	&\geq\left(\frac{1}{2}-\frac{1}{2_\alpha^*}\right)a\int_{\R^N}|(-\triangle)^{\frac{\alpha}{2}} w^j(\frac{x}{t_j})|^2dx\\
	&\,\,\,\,+\left(\frac{1}{4}-\frac{1}{2_\alpha^*}\right)b\left(\int_{\R^N}|(-\triangle)^{\frac{\alpha}{2}} w^j(\frac{x}{t_j})|^2dx\right)^2
	+\frac{b\bar{B}^2}{4}\int_{\R^N}|(-\triangle)^{\frac{\alpha}{2}} w^j|^2dx\\
	&=I^\infty_\lambda( w^j(\frac{\cdot}{t_j}))-\frac{1}{N}P_\lambda( w^j(\frac{\cdot}{t_j}))+\frac{b\bar{B}^2}{4}\int_{\R^N}|(-\triangle)^{\frac{\alpha}{2}} w^j|^2dx\\
	&=I^\infty_\lambda( w^j(\frac{\cdot}{t_j}))+\frac{b\bar{B}^2}{4}\int_{\R^N}|(-\triangle)^{\frac{\alpha}{2}} w^j|^2dx
	\endaligned
\end{equation}
and then we conclude that
$$
J^\infty_\lambda(w^j)\ge c_\lambda+\frac{b\bar{B}^2}{4}\int_{\R^N}|(-\triangle)^{\frac{\alpha}{2}} w^j|^2dx,
$$
where the inequality is strict if $V(x)\not\equiv V_\infty$. Then
by formulas \eqref{eqn:4.16}-\eqref{eqn:4.17},
\begin{equation}\label{eqn:4.18}
	\aligned
	c_\lambda+\frac{b\bar{B}^4}{4}&=J_\lambda(u_0)+\sum\limits_{j=1}^{k}J^\infty_\lambda(w^j)\geq kc_\lambda+\frac{b\bar{B}^4}{4},
	\endaligned
\end{equation}
where the inequality is strict if $V(x)\not\equiv V_\infty$. It follows that either $k=0$ or $k=1$. If $k=0$, we are done.
In particular, if {\rm(V$_4$)} holds, then $k=0$ and $u_0\in H_{{\rm rad}}^\al(\RN)$. Then $I_\la(u_0)=J_\lambda(u_0)-\frac{b\bar{B}^4}{4}=c_\la$ and $I_\la'(u_0)=J_\la'(u_0)=0$. We are done. If $k=1$ and $u_0\not=0$,
		then it follows from (\ref{eqn:4.17}) and (\ref{eqn:4.18})
		$$
		c_\lambda+\frac{b\bar{B}^4}{4}=J_\lambda(u_0)+\sum\limits_{j=1}^{k}J^\infty_\lambda(w^j)> c_\lambda+\frac{b\bar{B}^4}{4},
		$$
		which is a contradiction.
		So $u_0=0$, $k=1$ and $\bar{B}^2=\|(-\triangle)^{\frac{\alpha}{2}}w^1\|_2^2$. It follows from (\ref{eqn:4.16-}) and (\ref{eqn:4.16}) that
		$$
		\aligned
		\|u_n-w^1(\cdot-y_n^1)\|\rightarrow0,\,\,\,c_\lambda+\frac{b\bar B^4}{4}=J^\infty_\lambda(w^1).
		\endaligned
		$$
		Since if $V(x)\not\equiv V_\infty$, then by (\ref{eqn:4.17--})-(\ref{eqn:4.17+}),
		$$
		J^\infty_\lambda(w^1)\ge I^\infty_\lambda( w^1(\frac{\cdot}{t_1}))+\frac{b\bar{B}^4}{4}>c_\la+\frac{b\bar{B}^4}{4},
		$$
		which is a contradiction. Then $V(x)\equiv V_\infty$ and $u_n\rightarrow w^1$ strongly in $\H$. Therefore,
		$w^1$ is a nontrivial critical point of $I^\infty_\lambda$ and
		$I^\infty(u_0)=c_\lambda$.
		The proof is completed.
	\end{proof}

\subsection{Completion of the proof}
Choosing a sequence $\{\lambda_n\}_{n\in\N}\subset[\frac{1}{2},1]$ satisfying $\lambda_n\rightarrow1$,
we find a sequence of nontrivial critical points $\{u_{\lambda_n}\}_{n\in\N}$ (still denoted by $\{u_n\}_{n\in\N}$) of $I_{\lambda_n}$ and $I_{\lambda_n}(u_n)= c_{\lambda_n}$.
In particular, if {\rm(V$_4$)} holds, then $\{u_n\}_{n\in\N}\subset H_{{\rm rad}}^\al(\RN)$.
Now we show that $\{u_n\}$ is bounded in $\H$.
Remark that
%$$
%\aligned
%I_{\lambda_n}(u_n)=&\frac{1}{2}\int_{\R^N}a|(-\triangle)^\frac{\alpha}{2}u_n|^2dx+\frac{1}{2}\int_{\R^N}V(x)u_n^2 dx\\
%&+\frac{b}{4}\left(\int_{\R^N}|(-\triangle)^\frac{\alpha}{2}u_n|^2 dx\right)^2
%-\lambda_n\int_{\R^N}F(u_n)dx\\
%\endaligned
%$$
%and
$u_n$ satisfies the Poho\v zaev identity as follows
$$
\aligned
&\frac{N-2\alpha}{2}\int_{\R^N}a|(-\triangle)^{\frac{\alpha}{2}}
u_n|^2dx+\frac{N-2\alpha}{2}b\left(\int_{\R^N}|(-\triangle)^{\frac{\alpha}{2}}
u_n|^2dx\right)^2\\
&+\frac{N}{2}\int_{\R^N}V(x)u_n^2dx+\frac{1}{2}\int_{\R^N}\nabla
V(x)\cdot x \,u_n^2dx-N\lambda\int_{\R^N}F(u_n)dx=0.
\endaligned
$$
It follows that
$$
NI_{\lambda_n}(u_n)=\alpha\int_{\R^N}a|(-\triangle)^{\frac{\alpha}{2}}u_n|^2dx+\left(\alpha-\frac{N}{4}\right)b\left(\int_{\R^N}|(-\triangle)^{\frac{\alpha}{2}}
u_n|^2dx\right)^2-\frac{1}{2}\int_{\R^N}\nabla
V(x)\cdot x\, u_n^2dx.
$$
Since $c_\lambda^*$ is continuous on $\lambda$, $I_{\lambda_n}(u_n)= c_{\lambda_n}+o_n(1)<c_{\lambda_n}^*$.
It follows from (V$_1$) that there is a positive number $\kappa\in(0,2a\alpha)$ such that
$\|W\|_{\frac{N}{2\alpha}}\leq \kappa S_{\alpha}$. Hence,
$$
\left(a\alpha-\frac{\kappa}{2}\right)\int_{\R^N}|(-\triangle)^{\frac{\alpha}{2}}u_n|^2dx\leq NI_{\lambda_n}(u_n),
$$
which implies that $\int_{\R^N}a|(-\triangle)^{\frac{\alpha}{2}}u_n|^2dx$ is bounded from above. By (V$_3$), (f$_1$)-(f$_2$) and $I'_{\lambda_n}(u_n)u_n=0$, there is $\nu>0$ such that for any $\epsilon>0$,
there exists $C_\epsilon>0$ with
$$
\nu\int_{\R^N}u_n^2 dx\leq \int_{\R^N}a|(-\triangle)^{\frac{\alpha}{2}}u_n|^2dx+\int_{\R^N}V(x)u_n^2dx\leq
\epsilon\int_{\R^N}u_n^2dx+C_\epsilon\int_{\R^N}u_n^{2^*_\alpha}dx,
$$
which yields that $\{u_n\}_{n\in\N}$ is bounded in $L^2(\R^N)$.
Then $\{u_n\}_{n\in\N}$ is bounded in $\H$. By Theorem \ref{Thm:MP},
$$
\lim\limits_{n\rightarrow\infty}I(u_n)=\lim\limits_{n\rightarrow\infty}\left(I_{\lambda_n}(u_n)
+(\lambda_n-1)\int_{\R^N}F(u_n)dx\right)=\lim\limits_{n\rightarrow\infty}c_{\lambda_n}=c_1
$$
and for any $\varphi\in C_0^{\infty}(\R^N)$,
$$
\lim\limits_{n\rightarrow\infty}I'(u_n)\varphi=\lim\limits_{n\rightarrow\infty}\left(I'_{\lambda_n}(u_n)\varphi+(\lambda_n-1)\int_{\R^N}f(u_n)\varphi dx\right)
=0.
$$
That is, $\{u_n\}_{n\in\N}$ is a bounded Palais-Smale sequence for $I$ at level $c_1$.
Then by Lemma \ref{Lem:compact}, there is
a nontrivial critical point $u_0\in \H$ (radial, if {\rm(V$_4$)} holds)
for $I$ and $I(u_0)=c_1.$ Set 
$$
\nu=\inf\{I(u):u\in \H\setminus\{0\},\,I'(u)=0\}.
$$
Of course $0<\nu\leq I(u_0)= c_1<\infty$.\ By the definition of $\nu$, there is
$\{u_n\}_{n\in\N}\subset \H$ with $I(u_n)\rightarrow \nu$ and $I'(u_n)=0$. 
We deduce that $\{u_n\}_{n\in\N}$ is bounded in $\H$.
Up to a sequence, for some $\bar{B}>0$,
$$
\int_{\R^N}|(-\triangle)^{\frac{\alpha}{2}} u_n|^2dx\rightarrow \bar{B}^2.
$$
Let us set $J(u):=J_1(u)$ and $J^\infty(u):=J^\infty_1(u)$, for any $u\in \H$.
%$$
%J(u):=\frac{a+b\bar{B}^2}{2}\int_{\R^N}|(-\triangle)^{\frac{\alpha}{2}}
%u|^2dx+\frac{1}{2}\int_{\R^N}V(x)|u|^2dx-\int_{\R^N}F(u)dx
%$$
%and
%$$
%J^\infty(u):=\frac{a+b\bar{B}^2}{2}\int_{\R^N}|(-\triangle)^{\frac{\alpha}{2}}
%u|^2dx+\frac{1}{2}\int_{\R^N}V_\infty|u|^2dx-\int_{\R^N}F(u)dx,
%$$
From Lemma \ref{Lem:global} there exists $u_0\in \H$ such that $u_n\rightharpoonup u_0$ in $\H$ and $J'(u_0)=0$.
Furthermore, there exist $k\in\N\cup\{0\}$,
nontrivial critical points $w^1,\ldots,w^{k}$ of $J^\infty$
and $k$ sequences of points
$\{y_n^j\}_{n\in\N}\subset\R^N$, $1\leq j\leq k$, such that
\begin{equation}\label{eqn:4.1611-}
	\aligned
	\Big\|u_n-u_0-\sum\limits_{j=1}^{k}w^{j}(\cdot-y_n^j)\Big\|\rightarrow0,\quad \nu+\frac{b\bar{B}^4}{4}=
	J(u_0)+\sum\limits_{j=1}^{k}J^\infty(w^j)
	\endaligned
\end{equation}
and
\begin{equation*}
	%\label{eqn:4.1611}
	\bar{B}^2=\|(-\triangle)^{\frac{\alpha}{2}}
	u_0\|_2^2+\sum\limits_{j=1}^{k}\|(-\triangle)^{\frac{\alpha}{2}}w^j\|_2^2.
\end{equation*}
If $k=0$, we are done. If $k\geq 1$, assume by contradiction that $u_0\not=0$. Then, as in Lemma~\ref{Lem:compact}, 
\begin{equation}\label{eqn:4.1711}
	J(u_0)>\frac{b\bar{B}^2}{4}\int_{\R^N}|(-\triangle)^{\frac{\alpha}{2}} u_0|^2dx,
\end{equation}
for each $j$ there is $t_j\in(0,1]$ such that
%$w^j(\cdot/t_j)$ satisfies $P_1(u)=0$. Then by Lemma \ref{Lem:shlu1}, there exists $\gamma \in C([0,1],H^\al(\RN))$ such that $\gamma(0)=0$, $I^{\infty}(\gamma(1))<0$, $w^j\in \gamma([0,1])$ and
%$$
%I^\infty(w^j(\cdot/t_j))=\max_{t\in[0,1]}I^\infty(\gamma(t)),
%$$
%where $I^{\infty}:H^\al(\RN)\rg\R$, 
%\begin{equation*}
%I^{\infty}(u)=\frac{1}{2}\int_{\R^N}[a|(-\triangle)^{\frac{\alpha}{2}}u|^2+V_\iy|u|^2]dx+\frac{b}{4}\left(\int_{\R^N}|(-\triangle)^{\frac{\alpha}{2}}u|^2dx\right)^2-\int_{\R^N}F(u)dx.
%\end{equation*}
%By (V$_2$), we have
%$\max_{t\in[0,1]}I^\infty(\gamma(t))\ge\max_{t\in[0,1]}I(\gamma(t)),$
%which, by the definition of $c_1$, implies that 
$I^\infty(w^j(\cdot/t_j))\ge c_1$, which is strict if
$V(x)\not\equiv V_\infty$, and
$$
J^\infty(w^j)\ge c_1+\frac{b\bar{B}^2}{4}\int_{\R^N}|(-\triangle)^{\frac{\alpha}{2}} w^j|^2dx,
$$
where the inequality is strict if $V(x)\not\equiv V_\infty$. Then
by formulas \eqref{eqn:4.1611-}-\eqref{eqn:4.1711} and $\nu\le c_1$, we get
\begin{equation*}
	%\label{eqn:4.1811
	c_1+\frac{b\bar{B}^4}{4}\geq 
	\nu+\frac{b\bar{B}^4}{4}=J(u_0)+\sum\limits_{j=1}^{k}J^\infty(w^j)> kc_1+\frac{b\bar{B}^4}{4},
\end{equation*}
a contradiction. Hence $u_0=0$ and $k=1$, in which case
a contradiction follows as in the proof of Lemma~\ref{Lem:compact}.
The proof is complete.
\qed

%\bigskip

\end{document}